\documentclass[11pt]{article}
\usepackage{amsmath, amsfonts, amsthm, amssymb}
\usepackage{fullpage}

\usepackage[x11names, rgb]{xcolor}
\usepackage{graphicx}
\usepackage{tikz}
\usetikzlibrary{decorations,arrows,shapes}

\usepackage[colorlinks=true,linkcolor=blue!50!black,citecolor=blue!50!black]{hyperref}
\usepackage{etoolbox}
\usepackage{enumerate}
\usepackage{listings}
\usepackage{booktabs}
\usepackage{comment}
\usepackage[affil-it]{authblk}
\usepackage{float}
\usepackage{algorithm2e}
\usepackage{mathtools}

\newtheorem{theorem}{Theorem}[section]
\newtheorem{lemma}[theorem]{Lemma}

\newtheorem{corollary}[theorem]{Corollary}

\newtheorem{remark}[theorem]{Remark}

\newcommand{\T}{\mathbb{T}}
\newcommand{\R}{\mathbb{R}}
\newcommand{\Z}{\mathbb{Z}}
\newcommand{\E}{\mathbb{E}}
\newcommand{\Var}{\operatorname{Var}}

\newcommand{\dd}{\,\mathrm{d}}

\newcommand{\1}{\mathbf{1}}

\author{Shi Feng\thanks{Cornell University. Email: \texttt{sf599@cornell.edu}}}
\date{\today}
\title{Fluctuations of the quadratic matching cost on the flat torus:\\ dimensions five and higher}
\begin{document}
\maketitle
\begin{abstract}
We prove a central limit theorem for the quadratic
matching cost between the uniform measure and its
empirical counterpart on the flat torus in dimensions
$d\ge5$. Let $\mu$ be Haar probability measure on
$\T^d$ and let $\mu_n$ be the empirical measure of
$n$ independent samples from $\mu$. Then
\begin{align*}
\frac{W_2^2(\mu_n,\mu)-\E W_2^2(\mu_n,\mu)}
{\sqrt{\Var(W_2^2(\mu_n,\mu))}}
&\Longrightarrow N(0,1),\\
c_dn^{-1-4/d}
\le \Var(W_2^2(\mu_n,\mu))
&\le C_dn^{-1-4/d},
\end{align*}
for positive constants $c_d,C_d$ depending only on $d$ and all sufficiently
large $n$.

The proof introduces a random-walk approach to
matching fluctuations. We construct a random walk
that jumps between neighboring cells of the optimal
partition and use it to describe how the matching
cost responds to changes in the sample. Estimates
for this walk control the effect of replacing one
sample point and the interaction between two
replacements. These bounds yield the variance
estimates and allow us to apply a martingale
central limit theorem.
\end{abstract}

\setcounter{tocdepth}{2}
\tableofcontents

\section{Introduction}

Optimal transport compares probability distributions
by finding the least costly way to move mass from
one to the other, and connects problems in probability,
analysis, geometry, and statistics \cite{Villani09}. When one distribution is observed
through a random sample, the resulting transport cost
is itself random. Understanding its fluctuations is
a basic question in statistical optimal transport;
see \cite{dBGSLRV25} for a survey of distributional
results and their applications. In this paper, we
study quadratic matching between the uniform measure
and its empirical counterpart on the flat torus.

Random matching has a long history in geometric
probability and random optimization
\cite{AKT84,Tal94a,Tal94b,Yuk98}.
Much of this work concerns the typical size of the
optimal cost. A law of large numbers for the minimum
total bipartite matching length in dimensions
$d\ge3$ was proved in \cite{DY95}.
More recently, \cite{AGGT26} proved convergence
of the rescaled expected bipartite matching
length on the two-dimensional flat torus
to a positive finite constant.

For quadratic matching, the dependence of the
mean cost on dimension is now well understood.
Following the linearization proposed in
\cite{CLPS14}, the PDE approach developed in
\cite{AST19} identified the leading constant
in dimension two. In dimensions at least three,
\cite{GT21} proved convergence of the rescaled
expected costs for matching on a cube, both
between two independent samples and between
a sample and the uniform measure.
These results describe the leading cost of
matching, but leave open the finer question
of how that cost varies between independent samples.

The PDE approach also reveals structure in the
optimal allocation itself. In dimension two,
Ambrosio, Glaudo, and Trevisan
\cite{AGT19} obtained a quantitative
$L^2$ approximation of the optimal transport
map using a Poisson equation of the
heat-smoothed empirical density. Goldman, Huesmann,
and Otto \cite{GHO21} developed quantitative
linearization estimates for the Monge--Amp\`ere
equation with rough data. Building on this theory,
Goldman and Huesmann \cite{GH} proved that suitably
rescaled and spatially averaged optimal
displacements converge to a curl-free Gaussian
free field in dimensions two and three, with an
additional normalization in dimension two.
Their estimates provide geometric information
that we also use here.

Central limit theorems for empirical transport
costs are also available when the sampling
distribution differs from the reference measure.
Writing $\rho_n$ for the empirical measure of
$n$ independent samples from $\rho$, these
results take the form
\begin{align*}
\sqrt n\left(
W_2^2(\rho_n,\nu)-\E W_2^2(\rho_n,\nu)
\right)
\Longrightarrow N(0,\sigma_{\rho,\nu}^2),
\end{align*}
under suitable assumptions on $\rho$ and $\nu$.
This was established for quadratic cost in
\cite{dBL19}, extended to more general costs
in \cite{dBGSL24}, and proved on the flat torus
in \cite[Theorem~2.6]{GDGSCN23}.
In our setting, the sample is drawn from the
reference measure itself, so the Gaussian limit
above has zero variance. These central limit
theorems therefore do not identify the appropriate
fluctuation scale or a nontrivial limiting law.
The problem of identifying such limits was
highlighted in
\cite[Section~7, Problem~3]{dBGSLRV25}.

Limit theorems for coinciding sampling and
reference measures have been established
in one dimension under suitable assumptions
\cite{dBGU05} and for measures supported on
fixed finite spaces \cite{SM18}. For normalized volume on
closed Riemannian manifolds of dimension at most
three, Ledoux \cite{Led19} proved a limit theorem
for the renormalized dual Sobolev energy and
deduced fluctuation results for the quadratic
transport cost of suitably heat-smoothed
empirical measures. For the empirical measure
without smoothing, the preceding work
\cite{FM26} determines the variance asymptotics
and limiting distributions on the flat torus
in dimensions two, three, and four.
The limits are non-Gaussian in dimensions two
and three. Dimension four is critical: the
limit is Gaussian, but the variance has an
additional logarithmic factor.

The present paper treats every dimension $d\ge5$.
We prove that the variance of the squared
Wasserstein distance has order $n^{-1-4/d}$ and
that the centered cost, divided by its standard
deviation, converges to a standard Gaussian
distribution. Together with \cite{FM26}, this
determines the variance order and standardized
limiting law for quadratic matching on the flat
torus in every dimension $d\ge2$.

Our proof relates fluctuations of the matching
cost to a random walk on the cells of the optimal
partition. To study the replacement of one sample
point, we gradually transfer its assigned mass
to the replacement point while keeping all
locations fixed. The response is governed by
the derivatives of the cell masses with respect
to the weights defining the cells.
These derivatives form a weighted graph
Laplacian, a structure used in Newton methods
for semi-discrete optimal transport
\cite{KMT19}. The corresponding quadratic
form also appears in the analysis of vanishing
entropic regularization \cite{ANS22}.
We use the associated random walk to control
the resulting change in cost and the interaction
between two replacements. These estimates yield
the variance bounds and allow us to apply a
martingale central limit theorem. This differs
from the Fourier and Hoeffding analysis
used in \cite{FM26}.

\subsection{Model and results}\label{sec:model}

Let $\T^d=(\R/\Z)^d$ be the flat unit torus,
equipped with the distance
\begin{align*}
\rho_1(x,y):=\min_{k\in\Z^d}|x-y+k|.
\end{align*}
Let $\mu$ be the uniform probability measure
on $\T^d$, and let $Y_1,\ldots,Y_n$ be independent
with distribution $\mu$. Their empirical measure is
\begin{align*}
\mu_n:=\frac1n\sum_{i=1}^n\delta_{Y_i}.
\end{align*}
We study the quadratic matching cost
\begin{align}\label{eq:matching-cost}
W_n:=W_2^2(\mu_n,\mu)
=\inf_{\pi\in\Pi(\mu_n,\mu)}
\int_{\T^d\times\T^d}\rho_1(x,y)^2\dd\pi(x,y),
\end{align}
where $\Pi(\mu_n,\mu)$ is the set of probability
measures on $\T^d\times\T^d$ with marginals
$\mu_n$ and $\mu$.
Equivalently, we partition the torus into $n$
regions of volume $1/n$, assign one region to
each $Y_i$, and minimize the sum of the integrals
of $\rho_1(y,Y_i)^2$ over the assigned regions.

\begin{theorem}\label{thm:clt}
Let $d\ge5$. There exist constants
$0<c_d\le C_d<\infty$ such that, for all
sufficiently large $n$,
\begin{align}\label{eq:variance-order}
c_dn^{-1-4/d}
\le\Var(W_n)
\le C_dn^{-1-4/d}.
\end{align}
Moreover, as $n\to\infty$,
\begin{align}\label{eq:variance-normalized-clt}
\frac{W_n-\E W_n}{\sqrt{\Var(W_n)}}
\Longrightarrow N(0,1).
\end{align}
\end{theorem}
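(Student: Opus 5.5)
We follow the strategy outlined in the introduction: a martingale (Doob) decomposition of $W_n$ along the filtration $\mathcal F_k=\sigma(Y_1,\dots,Y_k)$, combined with resampling estimates obtained from a random walk on the cells of the optimal partition. Write $W_n-\E W_n=\sum_{k=1}^n D_k$ with $D_k=\E[W_n\mid\mathcal F_k]-\E[W_n\mid\mathcal F_{k-1}]$. With $Y_k'$ an independent copy and $W_n^{(k)}$ the cost after replacing $Y_k$ by $Y_k'$, we have $D_k=\E[W_n-W_n^{(k)}\mid\mathcal F_k]$. Both the variance bounds and the CLT reduce to estimates on the single-replacement difference $\Delta_kW:=W_n-W_n^{(k)}$ and on the double-replacement interaction $\Delta_j\Delta_kW$.

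\textbf{Single replacement via semi-discrete duality.} The optimal plan is given by Laguerre cells $L_i(\psi)=\{y:\rho_1(y,Y_i)^2-\psi_i\le \rho_1(y,Y_l)^2-\psi_l\ \forall l\}$, where the weights $\psi$ are chosen so that $|L_i|=1/n$. To pass from $Y_k$ to $Y_k'$, we place an extra atom at $Y_k'$ and transfer mass $t\in[0,1/n]$ from $Y_k$ to it, keeping all locations fixed. By the envelope theorem, $\frac{d}{dt}\mathrm{cost}=\psi_{k'}(t)-\psi_k(t)$. The weights solve $\nabla_\psi(\text{masses})\,\dot\psi=e_{k'}-e_k$, and $\nabla_\psi(\text{masses})$ is a weighted graph Laplacian $\mathcal L$ on the adjacency graph of the cells, with edge weights $|\partial L_i\cap\partial L_l|/(2\rho_1(Y_i,Y_l))$. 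Hence $\psi_{k'}-\psi_k$ is an effective-resistance/Green's-function quantity $\langle e_{k'}-e_k,\mathcal L^{-1}(e_{k'}-e_k)\rangle$, which we express through the continuous-time random walk with generator $\mathcal L$. Using the Goldman--Huesmann--Otto geometric input (cells have diameter $\lesssim n^{-1/d}$ and degree $O(1)$ with high probability, with quantitative regularity on mesoscopic scales), the walk behaves like simple random walk on a lattice at scale $n^{-1/d}$. In $d\ge5$ it is transient with summable Green's function, so $\Delta_kW$ equals a local term of size $n^{-1-2/d}$ plus a well-controlled remainder. This yields $\E[(\Delta_kW)^2]\lesssim n^{-2-4/d}$, and Efron--Stein then gives the upper bound $\Var W_n\le C_dn^{-1-4/d}$.

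\textbf{Lower bound.} We would show that $\E[W_n\mid Y_k]-\E W_n$ has variance $\gtrsim n^{-2-4/d}$. Since $\mu$ is uniform on the torus, this function of $Y_k$ is constant by translation invariance, so the Hoeffding first-order projection alone does not help. Instead we use $\Var W_n\ge\sum_k\E[\Var(D_k\mid\mathcal F_{k-1})]$ and show that $\E[\Delta_kW\mid \mathcal F_k]$ has conditional variance $\gtrsim n^{-2-4/d}$ for $k\ge n/2$. The idea is that the local cost of the cell of $Y_k$, given the earlier points, depends nondegenerately on where $Y_k$ lands relative to the already-present configuration: whether it falls in a crowded or a sparse region changes the cost by order $n^{-1-2/d}$ with positive probability. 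Proving this anticoncentration uses the local term in the random-walk expansion.

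\textbf{CLT and main obstacle.} For the martingale CLT, following McLeish and Hall--Heyde, we need $\max_k|D_k|/\sigma\to0$ in probability, which follows from moment bounds on $\Delta_kW$, and $\sum_k \E[D_k^2\mid\mathcal F_{k-1}]/\sigma^2\to1$ in $L^1$. The second condition reduces, as in Chatterjee's second-order Poincaré approach, to showing that the covariances of $\E[D_k^2\mid\mathcal F_{k-1}]$ across $k$ are small. This in turn requires bounding the interaction $\Delta_j\Delta_kW$ by something like $n^{-2-4/d}\,G(Y_j,Y_k)$, where $G$ is the walk's Green's function, which decays like $|Y_j-Y_k|^{2-d}$ in units of $n^{-1/d}$ and is summable in $d\ge5$. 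We would derive this bound by differentiating the first-replacement formula in the second replacement, which amounts to perturbation of $\mathcal L^{-1}$ combined with heat-kernel bounds for the walk. We expect this interaction estimate to be the main obstacle. It needs quenched Green's function and heat kernel bounds on a random, possibly degenerate Laguerre graph, uniformly under resampling. Our first attempt would be to establish a Nash/Poincaré inequality on good mesoscopic boxes from the Goldman--Huesmann--Otto estimates, and to treat bad events, whose probability decays faster than any polynomial, by crude deterministic bounds.
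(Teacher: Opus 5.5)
\textbf{Overall.} Your architecture for the upper bound and the CLT matches the paper's: mass transfer at fixed locations, the Laplacian $H$ and its reversible walk, the interaction written as $(e_{q_1}-e_{p_1})^{\mathsf T}H^{+}(e_{q_2}-e_{p_2})$, and concentration of $\sum_k\E[D_k^2\mid\mathcal F_{k-1}]$ via the interaction bound. The variance lower bound, however, has a genuine gap.

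\textbf{The lower bound.} You claim $\E D_k^2\gtrsim n^{-2-4/d}$ for $k\ge n/2$ via a ``local term in the random-walk expansion''. But the walk estimates only give upper bounds. The proposal has no nondegeneracy mechanism, either for the full replacement $\E|\Delta_kW|^2$ or for the future-averaged $D_k=\E[\Delta_kW\mid\mathcal F_k]$. In the latter, up to $n/2$ future sites could wash out the crowded/sparse effect you describe. The paper supplies both missing pieces; in units $C_L=n^{1+2/d}W_n$ they are as follows.

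\emph{Nondegeneracy.} With positive density there are isolated pairs of sites at distance $<\delta$ whose cells have radius at most $R$. The separating hyperplane keeps the two barycenters at distance at least $\delta_R$ from it, on opposite sides. Hence one of the two sites lies at least $\delta_R/2$ from its barycenter, so $\nabla_{X_i}C_L=2(X_i-\overline y_{X_i})$ from Lemma~\ref{lem:site-derivative} is bounded below. The one-sided quadratic bound in that lemma then shows that a fixed shift $X_1\mapsto X_1+h$ lowers the cost by $a^2/4$ with probability at least $c$. Since $X_1$ and $X_1+h$ are both uniform given the other sites, this gives $\E|C_L-C_L^{(1)}|^2\ge c_d$.

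\emph{Transfer to the variance.} The paper uses the Hoeffding decomposition together with $\bigl(\sum_S|S|v_S\bigr)^2\le\Var(C_L)\sum_S|S|^2v_S$. Here $\sum_S|S|^2v_S\le C_dn$ because $\E|\Delta_i\Delta_jC_L|^2\le C_d/n$ after averaging the locations. Your martingale version can be repaired with the same inputs: $\E D_k^2=\sum_{S\ni k,\,S\subseteq\{1,\dots,k\}}v_S\ge\frac12\E|\Delta_kC_L|^2-\frac14\sum_{j>k}\E|\Delta_j\Delta_kC_L|^2\ge c_d-C_d(n-k)/n$. This is positive for $k\ge(1-\epsilon)n$, which suffices. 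So the interaction estimate is an input to the lower bound as well, not only to the CLT.

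\textbf{The dimension threshold.} Your explanation of why $d\ge5$ is needed is also wrong, in a way that matters. Transience already holds for $d\ge3$, and a Green's function decaying like $|x|^{2-d}$ is not summable on $\Z^d$ in any dimension. What $d\ge5$ provides is square-summability.

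Relatedly, the effective resistance $(e_q-e_p)^{\mathsf T}H^{+}(e_q-e_p)$ equals $\frac{d}{ds}(w_q-w_p)=F''(s)$, not $w_q-w_p$, and it is not what controls the replacement cost. The paper writes $F(1)-F(0)=w_q(1)-w_p(0)-\int_0^1[s\,w_q'+(1-s)w_p']\dd s$. It bounds the endpoint weights separately, and bounds the integrand through $\sum_rm_r|w_r'(s)|^2\le2\int_0^\infty t\,[\overline p_t(p,p)+\overline p_t(q,q)]\dd t$. This is finite exactly when return probabilities decay faster than $t^{-2}$.

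An argument resting on transience would give $\E|\Delta_kC_L|^2=O(1)$ also for $d=3,4$. Efron--Stein would then contradict $\Var(C_L)\asymp n^{4/3}$ and $\Var(C_L)\sim n\log n/(16\pi^2)$ there. Likewise, the interaction needs decay $(1+\rho_L)^{-\gamma}$ with $2\gamma>d$. Its prefactor in $W_n$ units is $n^{-1-2/d}$, not $n^{-2-4/d}$.

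Finally, on-diagonal Nash/Poincar\'e bounds alone do not give this spatial decay. The paper adds an exit-time estimate built from the corrected distance $f(u)=|u-o|^2-w_u$, which satisfies $\mathcal Lf=d$ exactly.
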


Combining Theorem~\ref{thm:clt} with
\cite[Theorems~1.1--1.2]{FM26} gives the
following description of the variance and
limiting distribution in every dimension
$d\ge2$.

\begin{theorem}\label{thm:all-dimensions}
For every $d\ge2$, the variance and limiting
distribution of $W_n$ satisfy the following
as $n\to\infty$:
\begin{center}
\renewcommand{\arraystretch}{2}
\begin{tabular}{c c c}
\hline
Dimension & Variance & Limiting distribution\\
\hline
$d=2,3$
& $\Var(W_n)\sim v_dn^{-2}$
& $n(W_n-\E W_n)\Longrightarrow Z_d$\\
$d=4$
& $\displaystyle
   \Var(W_n)\sim\frac{\log n}{16\pi^2n^2}$
& $\displaystyle
   \frac{n(W_n-\E W_n)}{\sqrt{\log n}}
   \Longrightarrow N\left(0,\frac1{16\pi^2}\right)$\\
$d\ge5$
& $\Var(W_n)\asymp_d n^{-1-4/d}$
& $\displaystyle
   \frac{W_n-\E W_n}{\sqrt{\Var(W_n)}}
   \Longrightarrow N(0,1)$\\
\hline
\end{tabular}
\end{center}
For $d=2,3$,
\begin{align*}
v_d
&:=\frac1{8\pi^4}
\sum_{k\in\Z^d\setminus\{0\}}|k|^{-4},\\
Z_d
&:=\sum_{[k]\in(\Z^d\setminus\{0\})/\{\pm1\}}
\frac{\zeta_{[k]}-1}{2\pi^2|k|^2},
\end{align*}
where the $\zeta_{[k]}$ are independent
$\operatorname{Exp}(1)$ random variables. Here $\asymp_d$ means bounded above and below
by positive constants depending only on $d$.
\end{theorem}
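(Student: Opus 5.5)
This is a compilation result, so the proof is an assembly of earlier statements rather than a new argument. We split according to dimension, matching the three rows of the table.

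For $d\ge5$ the row is exactly Theorem~\ref{thm:clt}. The two-sided bound \eqref{eq:variance-order} is, by definition, the statement $\Var(W_n)\asymp_d n^{-1-4/d}$. The standardized convergence \eqref{eq:variance-normalized-clt} is the stated Gaussian limit.

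For $d=2,3,4$ we invoke \cite[Theorems~1.1--1.2]{FM26}, whose setting is the one described in Section~\ref{sec:model}: Haar measure on $\T^d$, $n$ i.i.d.\ samples, and quadratic cost with the flat distance $\rho_1$.

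In dimensions two and three, those results give $n^2\Var(W_n)\to v_d$ with $v_d=\frac1{8\pi^4}\sum_{k\ne0}|k|^{-4}$. This sum is finite precisely because $4>d$. They also give $n(W_n-\E W_n)\Rightarrow Z_d$. Here $Z_d$ is the series of independent centered exponentials weighted by $(2\pi^2|k|^2)^{-1}$ over $k$ modulo $\pm1$. The series converges in $L^2$ because $\sum|k|^{-4}<\infty$. As a consistency check, $\Var Z_d=\sum_{[k]}(2\pi^2|k|^2)^{-2}=\frac12\sum_{k\ne0}\frac1{4\pi^4|k|^4}=v_d$, so the constants in the row agree.

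In dimension four, the cited results give $\Var(W_n)\sim\frac{\log n}{16\pi^2 n^2}$. They also give the Gaussian limit $n(W_n-\E W_n)/\sqrt{\log n}\Rightarrow N(0,1/(16\pi^2))$, whose variance matches the leading coefficient.

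The only step needing care is checking that the normalizations and constants in \cite{FM26} agree with those displayed here. This means the same torus $(\R/\Z)^d$, the same $W_2^2$ without extra factors of $n$, and the same indexing of $Z_d$ over $(\Z^d\setminus\{0\})/\{\pm1\}$. The variance identity above settles this internally for $d=2,3$, and the matching of the $d=4$ variance with the limiting Gaussian variance settles it for $d=4$. Once these are verified, the theorem follows by combining the three cases.
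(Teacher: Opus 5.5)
Your proposal is correct and matches the paper, which obtains this theorem by combining Theorem~\ref{thm:clt} for $d\ge5$ with \cite[Theorems~1.1--1.2]{FM26} for $d=2,3,4$. Your computation $\Var Z_d=v_d$ is a valid sanity check but not needed. On its own it would not certify that the normalizations agree, since convergence in distribution does not by itself force convergence of variances.
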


\subsection{Proof strategy}\label{sec:proof-strategy}

In this section, we explain the main ideas behind
the proof through two questions:
\begin{enumerate}
\item How does studying the fluctuations of the
quadratic matching cost lead to a random walk
on the optimal partition?
\item Why does this method work in dimensions
$d\ge5$?
\end{enumerate}

Our proof uses a martingale central limit theorem.
We first rescale the torus so that each sample
point receives mass one. Put $L=n^{1/d}$ and
$X_i:=LY_i$. On $\T_L^d=(\R/L\Z)^d$, write
\begin{align*}
\rho_L(x,y)&:=\min_{k\in\Z^d}|x-y+Lk|,\\
\mu_L(\dd y)&:=\frac{\dd y}{n},
\end{align*}
where $\dd y$ is Lebesgue measure of total mass
$n$. The points $X_i$ are independent with
common law $\mu_L$.

Define the total quadratic matching cost by
\begin{align*}
C_L:=W_2^2\left(\sum_{i=1}^n\delta_{X_i},\dd y\right),
\end{align*}
where $W_2^2$ is defined for finite measures
of equal mass without normalizing their masses.
Rescaling space and mass gives
\begin{align}\label{eq:cost-wasserstein-scaling}
C_L=nL^2W_n=n^{1+2/d}W_n.
\end{align}
It therefore suffices to prove that $\Var(C_L)$
has order $n$ and that its standardized
fluctuations converge to a standard Gaussian
distribution.

We reveal the sample points one at a time.
Let $\mathcal F_i:=\sigma(X_1,\ldots,X_i)$,
with $\mathcal F_0$ trivial, and let $C_L^{(i)}$
be the cost after replacing $X_i$ by an independent
uniform point, independent of the original sample.
Then
\begin{align}\label{eq:strategy-martingale}
C_L-\E C_L&=\sum_{i=1}^n\xi_i,
\end{align}
where
\begin{align}\label{eq:strategy-replacement}
\xi_i
&:=\E[C_L\mid\mathcal F_i]
-\E[C_L\mid\mathcal F_{i-1}]\notag\\
&=\E[C_L-C_L^{(i)}\mid\mathcal F_i].
\end{align}
This identity connects the martingale differences
to replacement costs. We first bound the effect
of replacing one sample point, which gives the
variance upper bound through the Efron--Stein
inequality. To prove the central limit theorem,
we further control how this replacement cost
changes when a second point is replaced.
Together with moment bounds and a separate
variance lower bound, these estimates allow us
to apply the martingale central limit theorem.

To study one replacement, fix the other points
$z_1,\ldots,z_{n-1}$ and let $p$ and $q$ be the
original and replacement points. We gradually
transfer the unit mass at $p$ to $q$:
\begin{align}\label{eq:strategy-interpolation}
\nu_s
&:=\sum_{i=1}^{n-1}\delta_{z_i}
+(1-s)\delta_p+s\delta_q,\notag\\
F(s)&:=W_2^2(\nu_s,\dd y),
\qquad 0\le s\le1.
\end{align}
Here at stage $s$,
$p$ receives mass $1-s$, $q$ receives mass $s$,
and each remaining point receives mass one.
Thus $F(0)$ is the original cost and $F(1)$ is
the cost after replacement. When $z_1,\ldots,z_{n-1},p,q$ are independent
and uniform on $\T_L^d$, the Efron--Stein
inequality gives
\begin{align}\label{eq:strategy-efron-stein}
\Var(C_L)
\le\frac n2\E|F(1)-F(0)|^2.
\end{align}
Thus, to prove the variance upper bound,
it suffices to bound this expectation by some constant $C_d$.

Write $\mathcal Z=\{z_1,\ldots,z_{n-1},p,q\}$.
Kantorovich duality gives
\begin{align}\label{eq:strategy-dual}
F(s)=\max_{w\in\R^{\mathcal Z}}
\Bigg\{
&\sum_{i=1}^{n-1}w_{z_i}+(1-s)w_p+sw_q\notag\\
&-\int_{\T_L^d}
\max_{r\in\mathcal Z}
\{w_r-\rho_L(y,r)^2\}\dd y
\Bigg\}.
\end{align}
Here $w_r$ is the weight attached to the point $r$.
Define the associated potential by
\begin{align*}
\phi_w(y):=\max_{u\in\mathcal Z}
\{w_u-\rho_L(y,u)^2\}.
\end{align*}
For an optimal weight vector, each $y\in\T_L^d$
is assigned to a point attaining this maximum.
The cell assigned to $r$ is
\begin{align*}
C_r(w):=\left\{y\in\T_L^d:
w_r-\rho_L(y,r)^2=\phi_w(y)
\right\}.
\end{align*}
The optimal weights give these cells their
prescribed masses:
\begin{align*}
|C_{z_i}(w)|=1,\qquad
|C_p(w)|=1-s,\qquad
|C_q(w)|=s.
\end{align*}

Let $w(s)$ be an optimal weight vector for $\nu_s$,
and write $\phi_s:=\phi_{w(s)}$. Choose the
normalization so that
\begin{align*}
\int_{\T_L^d}\phi_s(y)\dd y=0.
\end{align*}
Differentiating the dual formula
\eqref{eq:strategy-dual} gives
$F'(s)=w_q(s)-w_p(s)$, and hence
\begin{align}\label{eq:strategy-replacement-integral}
F(1)-F(0)
=\int_0^1\bigl(w_q(s)-w_p(s)\bigr)\dd s;
\end{align}
see Lemma~\ref{lem:split-paths}.
The weights are difficult to control directly,
but their derivatives admit an explicit
representation, which we derive below.
We therefore integrate along the interpolation
to express $w(s)$ in terms of its endpoint
values and its derivative. A direct computation in Lemma~\ref{lem:replacement-response-bound}
gives
\begin{align*}
\left(\E|F(1)-F(0)|^2\right)^{1/2}
&\le
\left(\E|w_q(1)|^2\right)^{1/2}
+\left(\E|w_p(0)|^2\right)^{1/2}\\
&\quad+\int_0^1
\left(
\E\sum_{r\in\mathcal Z}m_r(s)
\left|\frac{\dd w_r(s)}{\dd s}\right|^2
\right)^{1/2}\dd s.
\end{align*}
The difficult part is to prove
\begin{align}\label{eq:strategy-integrated-response}
\int_0^1
\left(
\E\sum_{r\in\mathcal Z}m_r(s)
\left|\frac{\dd w_r(s)}{\dd s}\right|^2
\right)^{1/2}\dd s
\le C_d.
\end{align}
Once this estimate is available, the endpoint
weight moments follow from the cell-radius
estimates in Appendix~\ref{app:radius-moments}
and elementary arguments in the proof of
Corollary~\ref{cor:variance-upper-bound}.
Together, these bounds give
$\E|F(1)-F(0)|^2\le C_d$.

To estimate these derivatives, we first describe
how changing the weights changes the cell masses.
Write $m_r(w):=|C_r(w)|$ and define
\begin{align*}
H_{ru}(w):=\frac{\partial m_r(w)}{\partial w_u}.
\end{align*}
For intuition, we work locally in Euclidean
coordinates and suppress the periodicity of the
torus. Then the entries are given by
\begin{align}\label{eq:strategy-laplacian}
H_{ru}(w)
=
\begin{cases}
\displaystyle
-\frac{\mathcal H^{d-1}
(\partial C_r(w)\cap\partial C_u(w))}
{2|r-u|},
& r\ne u,\\[2ex]
\displaystyle
-\sum_{v\ne r}H_{rv}(w),
& r=u,
\end{cases}
\end{align}
where $\mathcal H^{d-1}$ denotes surface measure;
see Lemma~\ref{lem:mass-derivative} and
\cite{KMT19}.
Increasing a weight expands its cell and takes
mass from neighboring cells. The above definition makes $H$ the weighted Laplacian matrix of a connected
graph. Therefore, its kernel consists of constant vectors, and
it is invertible on the subspace of zero-sum
vectors. Denote this inverse by $H^+$.
For every $h$ with $\sum_r h_r=0$,
\begin{align*}
u=H^+h
\quad\Longleftrightarrow\quad
Hu=h,\qquad \sum_r u_r=0.
\end{align*}

Along our interpolation, the mass at $q$ increases
at rate one, the mass at $p$ decreases at rate
one, and all other masses remain unchanged.
The chain rule therefore gives
\begin{align}\label{eq:strategy-weight-response}
H(w(s))\frac{\dd w(s)}{\dd s}
=\frac{\dd}{\dd s}m(w(s))
=e_q-e_p,
\qquad 0<s<1,
\end{align}
where $e_r$ is the vector equal to one at $r$
and zero elsewhere. Applying $H^+$ determines the derivative up to
a common additive constant:
\begin{align}\label{eq:strategy-inverse-response}
\frac{\dd w(s)}{\dd s}
=H(w(s))^+(e_q-e_p)+c(s)\1.
\end{align}
Since $\partial_s\phi_s(y)=\dd w_r(s)/\dd s$
for almost every $y\in C_r(w(s))$, our
normalization fixes $c(s)$ through
\begin{align*}
0
=\frac{\dd}{\dd s}\int_{\T_L^d}\phi_s(y)\dd y
=\sum_{r\in\mathcal Z}m_r(s)
\frac{\dd w_r(s)}{\dd s}.
\end{align*}

The matrix $H$ has the explicit geometric formula
\eqref{eq:strategy-laplacian}, but its inverse
$H^+$ is harder to estimate directly.
We therefore use an integral representation
of the inverse on the appropriate subspace.
The matrix exponentials in this representation
describe the transition probabilities of a
random walk. This connects the weight derivatives
to the walk and allows us to use probabilistic
and analytic tools to estimate them.

Fix $s\in(0,1)$ and write $H=H(w(s))$,
$m_r=m_r(s)$, and $D:=\operatorname{Diag}(m)$.
The operator $D^{-1}H$ preserves the subspace
\begin{align*}
\left\{v:\sum_{r\in\mathcal Z}m_rv_r=0\right\},
\end{align*}
on which it is invertible and has positive
eigenvalues. Both $\dd w(s)/\dd s$ and
$D^{-1}(e_q-e_p)$ belong to this subspace.
Applying the integral representation of the
inverse on this subspace gives
\begin{align*}
\frac{\dd w(s)}{\dd s}
&=(D^{-1}H)^{-1}D^{-1}(e_q-e_p)\\
&=\int_0^\infty
e^{-tD^{-1}H}D^{-1}(e_q-e_p)\dd t.
\end{align*}

The matrix $-D^{-1}H$ is the generator of a
continuous-time random walk $(X_t)_{t\ge0}$
that jumps from $r$ to $u\ne r$ at rate
$-H_{ru}/m_r$. The walk is reversible with
stationary distribution $(m_r/n)_r$, and
\begin{align*}
\bigl(e^{-tD^{-1}H}\bigr)_{ru}
=\mathbb P_r(X_t=u),
\end{align*}
where $\mathbb P_r$ denotes the law of the walk
started at $r$, with the points in $\mathcal Z$ and their masses fixed.
Taking the $r$-th component of the preceding
representation therefore gives
\begin{align*}
\frac{\dd w_r(s)}{\dd s}
=\int_0^\infty
\left(
\frac{\mathbb P_r(X_t=q)}{m_q}
-\frac{\mathbb P_r(X_t=p)}{m_p}
\right)\dd t;
\end{align*}
see Lemma~\ref{lem:walk-response}.

This representation, together with the semigroup
property and reversibility, gives
\begin{align}\label{eq:strategy-response-return}
\sum_{r\in\mathcal Z}m_r
\left|\frac{\dd w_r(s)}{\dd s}\right|^2
\le
2\int_0^\infty t\left[
\frac{\mathbb P_p(X_t=p)}{m_p}
+\frac{\mathbb P_q(X_t=q)}{m_q}
-\frac2n
\right]\dd t.
\end{align}
as shown in \eqref{eq:response-return-bound}.
The subtraction removes the stationary
contributions. To prove \eqref{eq:strategy-integrated-response},
it therefore suffices to bound the return
probabilities of $X_t$.

This suggests the dimension threshold.
For a simple random walk on $\Z^d$, return
probabilities decay like $t^{-d/2}$, and
\begin{align*}
\int_1^\infty t\,t^{-d/2}\dd t<\infty
\quad\Longleftrightarrow\quad d\geq 5.
\end{align*}
The stability of heat-kernel estimates under
suitable changes in graph geometry and jump
rates \cite{Kumagai14} motivates this comparison.
We prove the required return-probability estimates
directly from the geometry of the optimal cells.
These estimates bound the right-hand side of
\eqref{eq:strategy-response-return}, giving
\eqref{eq:strategy-integrated-response}
and therefore $\Var(C_L)\le C_dn$.

To apply the martingale central limit theorem
\cite[Theorem~8.2.4]{Durrett19}, we verify the
Lindeberg condition \eqref{eq:conditional-lindeberg} using the fourth-moment
bound in Lemma~\ref{lem:replacement-fourth-moment}
and the variance lower bound in
Lemma~\ref{lem:variance-lower-bound}.
The main difficulty is to prove
\begin{align}\label{eq:strategy-conditional-variance}
\frac{\sum_{i=1}^n
\E[\xi_i^2\mid\mathcal F_{i-1}]}
{\Var(C_L)}
\longrightarrow1
\qquad\text{in probability}.
\end{align}
By Lemma~\ref{lem:conditional-variance} and the
variance lower bound, to prove
\eqref{eq:strategy-conditional-variance},
it suffices to obtain suitable bounds on the
interaction \eqref{eq:strategy-interaction}
between two replacements.

For two replacements,
fix the remaining points $z_1,\ldots,z_{n-2}$
and define
\begin{align*}
\nu_{s,t}
&:=\sum_{i=1}^{n-2}\delta_{z_i}
+(1-s)\delta_{p_1}+s\delta_{q_1}
+(1-t)\delta_{p_2}+t\delta_{q_2},\\
F(s,t)&:=W_2^2(\nu_{s,t},\dd y),
\qquad 0\le s,t\le1.
\end{align*}
The difference
\begin{align}\label{eq:strategy-interaction}
F(1,1)-F(1,0)-F(0,1)+F(0,0)
\end{align}
measures how the cost of replacing $p_1$ by $q_1$
changes when $p_2$ is also replaced by $q_2$.
Differentiating as in the one-replacement
argument connects this interaction to the
random walk. Estimates for its transition
probabilities give the required decay of the
interaction.
The martingale central limit theorem then
proves Theorem~\ref{thm:clt}.

The paper is organized as follows.
Section~\ref{sec:cells} develops the derivative
formulas relating the transport cost, cell masses,
and optimal weights.
Section~\ref{sec:geometry} introduces the associated
random walk and uses its return estimates to prove
the variance upper bound.
Section~\ref{sec:mixed-replacements} bounds the
interaction between two replacements and the
fourth moment of a single replacement cost.
Section~\ref{sec:clt} proves the variance lower
bound and the central limit theorem.
Appendix~\ref{app:radius-moments} establishes the
cell-radius estimates,
Appendix~\ref{sec:hoeffding} recalls the Hoeffding
decomposition, and
Appendix~\ref{app:direct-poincare} explains why
a direct application of the product Poincar\'e
inequality does not give the correct variance order.

\section{Cells, weights, and derivatives}\label{sec:cells}

In this section, we derive how the cell volumes and optimal
transport cost change when the masses or point locations vary.
The results hold in every dimension $d\ge2$.
The derivative of the cell volumes with respect to the
weights is standard in semi-discrete optimal transport;
see \cite[Theorem~1.3]{KMT19}.
For completeness, we derive this formula directly in the
periodic setting, together with the other derivative
formulas needed below.

\subsection{The deterministic allocation problem}
\label{sec:cells-weights}

Fix a finite set of distinct points
$\mathcal Z\subset\T_L^d$, which we call sites.
These are the points to which mass is assigned.
Write $\R^{\mathcal Z}$ for the space of real vectors
$u=(u_p)_{p\in\mathcal Z}$.
For $p\in\mathcal Z$, let $e_p$ be the coordinate vector
whose component at $p$ is one and whose other components are zero.

Let $m=(m_p)_{p\in\mathcal Z}$ be strictly positive masses satisfying
$\sum_{p\in\mathcal Z}m_p=n$.
The mass $m_p$ is the volume that must be assigned to $p$.
Define
\begin{align*}
 F_{\mathcal Z}(m)
 :=W_2^2\left(\sum_{p\in\mathcal Z}m_p\delta_p,dy\right).
\end{align*}
Thus $F_{\mathcal Z}(m)$ is the minimum total squared distance needed to
transport uniform volume to the sites with these prescribed masses.

By Kantorovich duality,
\begin{align*}
F_{\mathcal Z}(m)
=\max_{w\in\R^{\mathcal Z}}
\left\{
\sum_{p\in\mathcal Z}m_pw_p
-\int_{\T_L^d}\max_{q\in\mathcal Z}
\{w_q-\rho_L(y,q)^2\}\dd y
\right\}.
\end{align*}
For a weight vector $w\in\R^{\mathcal Z}$, define the potential, cells,
and cell-volume vector by
\begin{align*}
 \phi_w(y)&=\max_{q\in\mathcal Z}\{w_q-\rho_L(y,q)^2\},\\
 C_p(w)&=\{y\in\T_L^d:w_p-\rho_L(y,p)^2=\phi_w(y)\},
 \qquad p\in\mathcal Z,\\
 m(w)&=(|C_p(w)|)_{p\in\mathcal Z}.
\end{align*}
At any maximizing weight vector $w$, the cells
have the prescribed volumes:
\begin{align*}
|C_p(w)|=m_p,
\qquad p\in\mathcal Z.
\end{align*}
The cell $C_p(w)$ is the region transported to $p$ and has volume
$m_p$. These cells form an optimal partition up to sets of zero
volume. Increasing $w_p$ makes assignment to $p$ more favorable.

Adding the same constant to all weights changes neither
the cells nor the dual value. It adds that constant to
the potential. We therefore choose the unique normalization
\begin{align}\label{eq:normalization}
 \int_{\T_L^d}\phi_w(y)\dd y=0.
\end{align}
Indeed, since $|\T_L^d|=n$, this is obtained by subtracting
$n^{-1}\int_{\T_L^d}\phi_w(y)\dd y$ from every weight.
With this normalization, the dual formula becomes
\begin{align*}
 F_{\mathcal Z}(m)=\sum_{p\in\mathcal Z}m_pw_p.
\end{align*}

\subsection{Mass response}
\label{ssec:mass-response}

For distinct sites $p,q\in\mathcal Z$, denote the common interface by
$\Sigma_{pq}=\partial C_p\cap\partial C_q$.
To describe it, represent the torus by $[0,L]^d$.
Locally, we have
\begin{align*}
 \rho_L(y,p)=|y-p|,\qquad \rho_L(y,q)=|y-q|.
\end{align*}
On $\Sigma_{pq}$, the weighted costs agree:
\begin{align*}
 |y-p|^2-w_p=|y-q|^2-w_q.
\end{align*}
Cancelling the quadratic terms gives
\begin{align}\label{eq:face-equation}
 2y\cdot(q-p)=|q|^2-|p|^2+w_p-w_q.
\end{align}
Thus each face piece lies in a hyperplane with outward
unit normal $(q-p)/|q-p|$ from $C_p$. Therefore, the image to keep in mind is that the cells $C_p$ are polytopes and the vector $q-p$ is perpendicular to the interface $\Sigma_{pq}$.

The faces consist of finitely
many flat pieces. For each piece $F$, let $\ell_F=|q-p|$ and define the conductance
\begin{align*}
 c_{pq}
 =\sum_{F\subset\Sigma_{pq}}
   \frac{\mathcal H^{d-1}(F)}{2\ell_F},
 \qquad p\ne q.
\end{align*}
Here $\mathcal H^{d-1}$ denotes surface measure. Each
face is counted once. The boundary of the fundamental cube
introduces no additional face. Set $c_{pp}=0$ and write
$p\sim q$ when $c_{pq}>0$.

The coefficient $c_{pq}$ measures the rate at which volume is
transferred from $C_q$ to $C_p$ when $w_p$ increases and the
other weights remain fixed. Define the matrix
$H=(H_{pq})_{p,q\in\mathcal Z}$ by
\begin{align*}
 H_{pq}
 =
 \begin{cases}
 \displaystyle\sum_{r\in\mathcal Z\setminus\{p\}}c_{pr},&p=q,\\
 -c_{pq},&p\ne q.
 \end{cases}
\end{align*}
This is the weighted graph Laplacian of the cell adjacency graph.
Since $c_{pq}=c_{qp}$, it is symmetric and
\begin{align*}
 u^{\mathsf T}Hu
 =\frac12\sum_{p,q\in\mathcal Z}c_{pq}(u_p-u_q)^2.
\end{align*}
Note that
\begin{align*}
 u^{\mathsf T}Hu=0
 \quad\Longleftrightarrow\quad
 u\text{ is constant}.
\end{align*}
Since $H$ is symmetric and $H\1=0$, it maps the zero-sum
subspace into itself and is positive definite there.
Its restriction to this subspace is therefore invertible.

We denote this inverse by $H^+$ and extend it by zero on
constant vectors. Thus, for $\sum_{p\in\mathcal Z}h_p=0$,
\begin{align*}
 u=H^+h
 \quad\Longleftrightarrow\quad
 Hu=h,\qquad \sum_{p\in\mathcal Z}u_p=0,
\end{align*}
and $H^+\1=0$.

\begin{lemma}\label{lem:mass-derivative}
For distinct sites and positive cell volumes, the cell adjacency
graph is connected. The cell volumes are continuously
differentiable functions of the weights, with
\begin{align}\label{eq:dm/dw}
 \frac{\partial m_p}{\partial w_q}
 =\frac{\partial |C_p(w)|}{\partial w_q}=H_{pq},
 \qquad p,q\in\mathcal Z.
\end{align}
Write $w(m)$ for the optimal weights normalized by
$\int_{\T_L^d}\phi_{w(m)}(y)\dd y=0$.
For directions $h,k\in\R^{\mathcal Z}$ with
$\sum_{p\in\mathcal Z}h_p=\sum_{p\in\mathcal Z}k_p=0$,
define the directional derivatives by
\begin{align*}
  Dw(m)[h]
 &:=\left.\frac{d}{dt}w(m+th)\right|_{t=0}\\
 &=H^+h-\frac1n
 \left(\sum_{p\in\mathcal Z}m_p(H^+h)_p\right)\1,\\
 DF_{\mathcal Z}(m)[h]
 &:=\left.\frac{d}{dt}F_{\mathcal Z}(m+th)\right|_{t=0}
 =\sum_{p\in\mathcal Z}w_p(m)h_p,\\
 D^2F_{\mathcal Z}(m)[h,k]
 &:=\left.\frac{\partial^2}{\partial s\,\partial t}
 F_{\mathcal Z}(m+th+sk)\right|_{s=t=0}
 =h^{\mathsf T}H^+k.
\end{align*}
Here $H$ is evaluated at $w(m)$, and the parameters are taken
small enough that all masses remain positive.
\end{lemma}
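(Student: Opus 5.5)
I will prove the four assertions in order: the derivative formula \eqref{eq:dm/dw}, connectivity of the adjacency graph, and then the formulas for $Dw$, $DF_{\mathcal Z}$ and $D^2F_{\mathcal Z}$, which follow from the first two by the implicit function theorem.

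\emph{Mass derivative.} Fix $q$ and vary $w_q$ alone by $\varepsilon>0$. By \eqref{eq:face-equation}, each face piece $F\subset\Sigma_{pq}$ lies in a hyperplane whose offset moves by $\varepsilon/(2|q-p|)$ in the direction of the unit normal $(q-p)/|q-p|$. To first order, the volume that $C_p$ loses to $C_q$ is therefore $\mathcal H^{d-1}(F)\,\varepsilon/(2\ell_F)$, summed over pieces. The other cells are affected only through volume near lower-dimensional intersections of faces, and this is $o(\varepsilon)$. This gives $\partial m_p/\partial w_q=-c_{pq}$ for $p\ne q$. Since $\sum_p m_p=n$ is constant, the diagonal entry is $H_{qq}=\sum_r c_{qr}$.

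To make this rigorous, I would write
\begin{align*}
m_p(w)=\int_{\T_L^d}\1\{w_p-\rho_L(y,p)^2> w_r-\rho_L(y,r)^2\ \forall r\ne p\}\dd y,
\end{align*}
and apply the coarea formula in the direction normal to each face. Ties between distinct sites occur on a null set, because the sites are distinct. Continuity of $H$ in $w$ requires that face areas vary continuously. I would argue this via the convex potential $\phi_w$: $m(w)$ is the gradient of the convex $C^1$ function $w\mapsto\int\phi_w$, and the face areas are continuous because the cells are intersections of finitely many half-spaces (in local charts) whose areas depend continuously on the offsets while the cells have nonempty interior. I expect this step, the $C^1$ regularity including faces degenerating at the torus wrap-around, to be the main technical obstacle. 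I would first try to reduce it to \cite[Theorem~1.3]{KMT19} by lifting to a periodic power diagram in $\R^d$.

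\emph{Connectivity.} Suppose the adjacency graph splits into two nonempty classes $A$ and $B$. Then $U=\bigcup_{p\in A}C_p$ and $\bigcup_{p\in B}C_p$ are closed sets covering the torus. Their intersection has $\mathcal H^{d-1}$-measure zero, since every common boundary point lies on a face of zero area or on a lower-dimensional intersection. Both sets have positive volume because all masses are positive. A connected space such as $\T_L^d$ cannot be separated by a closed set of zero $(d-1)$-measure, for instance by the relative isoperimetric inequality on the torus. This is a contradiction, so the graph is connected, and hence $\ker H$ consists of the constant vectors.

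\emph{Weight and cost derivatives.} The map $w\mapsto m(w)$ is $C^1$, takes values in $\{\sum m=n\}$, and has differential $H$, which is invertible from the zero-sum subspace to itself. Since $m$ is invariant under adding constants, the implicit function theorem gives a $C^1$ solution $w(m+th)$ with $H\,\tfrac{d}{dt}w=h$. Hence $\tfrac{d}{dt}w=H^+h+c\1$. The normalization forces
\begin{align*}
0=\frac{d}{dt}\int\phi_{w}=\sum_p m_p\frac{dw_p}{dt},
\end{align*}
because $\partial_t\phi=\dot w_p$ a.e.\ on $C_p$. This identifies $c=-\frac1n\sum_p m_p(H^+h)_p$. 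For the cost, $F_{\mathcal Z}$ is the maximum of a dual functional that is affine in $m$, so the envelope theorem (or direct differentiation of $F=\sum m_pw_p$, using $\sum_p m_p\dot w_p=0$) gives $DF[h]=\sum_p w_ph_p$. Differentiating once more in direction $k$ gives
\begin{align*}
\sum_p h_p\,(Dw[k])_p=h^{\mathsf T}H^+k,
\end{align*}
where the constant term drops out because $\sum_p h_p=0$.
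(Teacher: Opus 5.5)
Your proposal is correct and follows the paper's proof essentially step for step: the face-motion computation giving $\partial m_p/\partial w_q=H_{pq}$, continuity of face areas as the bounding hyperplanes translate, the inverse function theorem on the zero-sum slice, fixing the additive constant through $\sum_p m_p\,\dot w_p=0$, and a second differentiation in which the constant drops out because $\sum_p h_p=0$. The one real addition is your connectivity argument: two closed unions of cells that meet in an $\mathcal H^{d-1}$-null set cannot cover the connected torus. Keep it, because the paper's proof uses invertibility of $H$ on zero-sum vectors without ever proving that part of the statement.
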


\begin{proof}
Keep the sites fixed and perturb the weights to $w+tu$.
The face equation \eqref{eq:face-equation} becomes
\begin{align*}
 2y\cdot(q-p)
 =|q|^2-|p|^2+w_p-w_q+t(u_p-u_q).
\end{align*}
Thus the outward normal speed from the cell at $p$ is
$(u_p-u_q)/(2\ell_F)$. Multiplying by face area and summing gives
\begin{align*}
 \left.\frac{d}{dt}|C_p(w+tu)|\right|_{t=0}
 =\sum_{q\in\mathcal Z\setminus\{p\}}c_{pq}(u_p-u_q)
 =(Hu)_p.
\end{align*}
We next show that $H(w)$ is continuous. Since the sites are fixed,
varying $w$ only translates the hyperplanes bounding the lifted
cells linearly. Therefore, the areas
of faces vary continuously. The distances $\ell_F$ are fixed,
so each $c_{pq}$, and hence $H(w)$, is continuous.

By \eqref{eq:dm/dw} and continuity of $H$, the map $w \to m(w)$ is $C^1$. On the subspace $\sum_pw_p=0$, its
derivative is invertible, so the inverse function
theorem gives $w(m)$ with derivative $H^+$.

To obtain our normalization \eqref{eq:normalization}, subtract
$n^{-1}\int_{\T_L^d}\phi_w(y)\dd y$ from every weight.
Therefore,
\begin{align*}
 Dw(m)[h]
 =H^+h-\frac1n
 \left(\sum_{p\in\mathcal Z}m_p(H^+h)_p\right)\1.
\end{align*}
Finally, since
\begin{align*}
F_{\mathcal Z}(m)
=\sum_{p\in\mathcal Z}m_pw_p(m)
-\int_{\T_L^d}\phi_{w(m)}(y)\dd y,
\end{align*}
the chain rule gives
\begin{align*}
 DF_{\mathcal Z}(m)[h]
 &=\left.\frac{d}{dt}\right|_{t=0}
 \left[
 \sum_{p\in\mathcal Z}(m_p+th_p)w_p(m+th)
 -\int_{\T_L^d}\phi_{w(m+th)}(y)\dd y
 \right]\\
 &=\sum_{p\in\mathcal Z}h_pw_p
   +\sum_{p\in\mathcal Z}m_pDw_p(m)[h]
   -\sum_{p\in\mathcal Z}\int_{C_p(w)}Dw_p(m)[h]\dd y\\
 &=\sum_{p\in\mathcal Z}h_pw_p
   +\sum_{p\in\mathcal Z}\bigl(m_p-|C_p(w)|\bigr)Dw_p(m)[h]\\
 &=\sum_{p\in\mathcal Z}h_pw_p.
\end{align*}
In the second line, the maximizing site is unique almost
everywhere, so the derivative of the potential on $C_p(w)$
is $Dw_p(m)[h]$. The last equality uses $|C_p(w)|=m_p$.

Differentiating once more in direction $k$ gives
\begin{align*}
 D^2F_{\mathcal Z}(m)[h,k]
 &=\sum_{p\in\mathcal Z}h_pDw_p(m)[k]\\
 &=h^{\mathsf T}H^+k,
\end{align*}
since $\sum_ph_p=0$ cancels the constant introduced
by the normalization.
\end{proof}

\subsection{The derivative with respect to a site}
\label{sec:site-derivative}

Keep the masses fixed and move a site $p\in\mathcal Z$. Write
\begin{align*}
 F_{\mathcal Z}^{p,v}(m)
 :=W_2^2\left(
 \sum_{r\in\mathcal Z\setminus\{p\}}m_r\delta_r
 +m_p\delta_{p+v},dy\right).
 \qquad v\in\R^d,
\end{align*}
We write
$\nabla_pF_{\mathcal Z}(m)$ for the gradient obtained by
varying the location of $p$ in this way. Define the corresponding cell barycenter by
\begin{align*}
 \overline y_p
 =\frac1{m_p}\int_{C_p(w)} y \dd y.
\end{align*}

\begin{lemma}\label{lem:site-derivative}
For fixed masses, the optimal cost is Lipschitz in
the site locations. At almost every configuration, for every
$p\in\mathcal Z$,
\begin{align*}
 \nabla_pF_{\mathcal Z}(m)
 &=2\int_{C_p(w)}(p-y)\dd y\\
 &=2m_p(p-\overline y_p).
\end{align*}
Moreover, for every $v\in\R^d$,
\begin{align*}
 F_{\mathcal Z}^{p,v}(m)
 \le F_{\mathcal Z}(m)
 +\nabla_pF_{\mathcal Z}(m)\cdot v+m_p|v|^2.
\end{align*}
\end{lemma}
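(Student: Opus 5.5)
The plan is to compare the optimal cost with a competitor built from the current optimal partition, and then read off the gradient from the resulting one-sided bound. Fix $m$, let $w=w(m)$ be the optimal weights, and let $C_r=C_r(w)$ be the optimal cells, with $|C_r|=m_r$. For $v\in\R^d$, keep every cell and send $C_p$ to the moved site $p+v$ instead of $p$. This is an admissible coupling for the measure $\sum_{r\ne p}m_r\delta_r+m_p\delta_{p+v}$, so
\begin{align*}
F_{\mathcal Z}^{p,v}(m)\le \sum_{r\ne p}\int_{C_r}\rho_L(y,r)^2\dd y+\int_{C_p}\rho_L(y,p+v)^2\dd y .
\end{align*}
Working in the lifted Euclidean picture around $p$, I write $\rho_L(y,p)=|y-p|$ on $C_p$, where $y$ is the representative closest to $p$. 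The nearest-image distance then gives $\rho_L(y,p+v)^2\le|y-p-v|^2=|y-p|^2+2(p-y)\cdot v+|v|^2$. Integrating over $C_p$ yields
\begin{align*}
F_{\mathcal Z}^{p,v}(m)\le F_{\mathcal Z}(m)+2\int_{C_p}(p-y)\dd y\cdot v+m_p|v|^2 .
\end{align*}
One point needs checking: the lift must be well defined. The set of $y$ with two nearest images of $p$ has measure zero, and the integral $\int_{C_p}(p-y)\dd y$ uses this lift consistently, so it is invariant under translation of the representative cube.

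Next I will prove the Lipschitz property. Apply the inequality above at an arbitrary configuration and in both directions: move from $p$ to $p+v$, and from $p+v$ back to $p$ using the cells of the moved configuration. Since every cell lies in a ball of radius at most $\sqrt d\,L/2$, this gives $|F_{\mathcal Z}^{p,v}(m)-F_{\mathcal Z}(m)|\le C m_p(L|v|+|v|^2)$. Hence $F$ is locally Lipschitz in each site, and so in all sites jointly. Rademacher's theorem then gives differentiability at almost every configuration.

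At a point of differentiability, write $g=2\int_{C_p}(p-y)\dd y$. The map $v\mapsto F_{\mathcal Z}^{p,v}(m)-F_{\mathcal Z}(m)-g\cdot v$ is differentiable at $0$, vanishes at $0$, and is bounded above by $m_p|v|^2$. A differentiable function lying below a quadratic that touches it at $0$ has zero gradient there, so $\nabla_pF_{\mathcal Z}(m)=g$. Finally, $g=2m_p(p-\overline y_p)$ by the definition of the barycenter, with the same lift used in both.

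The main obstacle is the periodic bookkeeping: making the lift of $C_p$ near $p$ canonical so that the barycenter is well defined and the expansion of $|y-p-v|^2$ is legitimate. The resolution is that the comparison only requires the upper bound $\rho_L\le|\cdot|$ for a fixed lift, and that inequality holds for every lift.
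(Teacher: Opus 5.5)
Your proof is correct. The competitor step is exactly the paper's proof of the quadratic upper bound: keep the optimal partition, send $C_p$ to $p+v$, and use $\rho_L(y,p+v)\le|\tilde y-p-v|$ for the lift $\tilde y$ of $y$ nearest to $p$.

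The difference is in the first half. The paper does not prove the Lipschitz property or the gradient formula here; it refers to \cite[Lemma~2.2]{FM26}. You instead extract both from that same one-sided inequality:
\begin{itemize}
\item The two-sided comparison, with $\bigl|2\int_{C_p}(p-y)\dd y\bigr|\le\sqrt d\,L\,m_p$, gives the Lipschitz bound.
\item Rademacher's theorem gives differentiability almost everywhere.
\item The vector $g=2\int_{C_p}(p-y)\dd y$ is a supergradient of $v\mapsto F^{p,v}_{\mathcal Z}(m)$ at $v=0$, since the increment minus $g\cdot v$ is at most $m_p|v|^2$. Hence $g$ must equal $\nabla_pF_{\mathcal Z}(m)$ wherever the gradient exists.
\end{itemize}
The citation buys brevity. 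Your route makes the lemma self-contained and gives the quadratic bound at \emph{every} configuration with the explicit vector $g$, not only at differentiability points.

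Two bookkeeping remarks. First, your local Lipschitz bound $m_p(\sqrt d\,L|v|+|v|^2)$ upgrades to a global one by subdividing the segment. Second, if $p+v$ collides with another site, the paper's cells are not defined there. In that case you can run the reverse comparison with an arbitrary optimal coupling, or simply use $\bigl|\sqrt{F^{p,v}_{\mathcal Z}(m)}-\sqrt{F_{\mathcal Z}(m)}\bigr|\le\sqrt{m_p}\,|v|$.
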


\begin{proof}
The Lipschitz property and derivative formula follow as in
\cite[Lemma~2.2]{FM26}. The argument applies in every dimension,
after rescaling the torus and replacing the equal masses
by the prescribed masses.

For the inequality, use the original partition as a
competitor. Since
$\rho_L(y,p+v)\le |p+v-y|$, we obtain
\begin{align*}
 F_{\mathcal Z}^{p,v}(m)-F_{\mathcal Z}(m)
 &\le \int_{C_p(w)}
 \bigl(|p+v-y|^2
       -|p-y|^2\bigr)\dd y\\
 &=2v\cdot\int_{C_p(w)}(p-y)\dd y
   +m_p|v|^2\\
 &=\nabla_pF_{\mathcal Z}(m)\cdot v+m_p|v|^2.
\end{align*}
\end{proof}

\section{Random walk on cells}\label{sec:geometry}
\subsection{Replacing a point by transferring mass}
\label{sec:split-paths}

We represent a point replacement by transferring mass
between its old and new locations. The locations remain fixed,
so the mass derivatives from Section~\ref{sec:cells} apply.

For one replacement, fix distinct sites
$z_1,\ldots,z_{n-1},p,q\in\T_L^d$ and define
\begin{align*}
 \nu_s
 &=\sum_{i=1}^{n-1}\delta_{z_i}
   +(1-s)\delta_p+s\delta_q,\\
 F(s)&=W_2^2(\nu_s,\dd y),
 \qquad 0\le s\le1.
\end{align*}
Thus $F(0)$ is the cost with site $p$, and $F(1)$ is the
cost after replacing $p$ by $q$. We call $z_1,\ldots,z_{n-1}$ background sites and $p,q$ transfer sites.

For $0<s<1$, denote the optimal weights by $w(s)$,
with the normalization \eqref{eq:normalization}.
At $s=0$ and $s=1$, the zero-mass site is omitted.

\begin{lemma}\label{lem:split-paths}
For $0<s<1$,
\begin{align*}
F'(s)=w_q(s)-w_p(s).
\end{align*}
Moreover,
\begin{align*}
F(1)-F(0)
=\lim_{\varepsilon\downarrow0}
\int_\varepsilon^{1-\varepsilon}
\bigl(w_q(s)-w_p(s)\bigr)\dd s.
\end{align*}
\end{lemma}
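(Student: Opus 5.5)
For $0<s<1$ the derivative formula comes straight from Lemma~\ref{lem:mass-derivative}. Put $\mathcal Z=\{z_1,\ldots,z_{n-1},p,q\}$ and $m(s)=\sum_i e_{z_i}+(1-s)e_p+se_q$. Every mass in $m(s)$ is strictly positive and the total is $n$, so $F(s)=F_{\mathcal Z}(m(s))$. The curve $s\mapsto m(s)$ is affine with zero-sum direction $h=e_q-e_p$. By the chain rule and the formula $DF_{\mathcal Z}(m)[h]=\sum_r w_r(m)h_r$,
\begin{align*}
F'(s)=DF_{\mathcal Z}(m(s))[e_q-e_p]=w_q(s)-w_p(s).
\end{align*}
Lemma~\ref{lem:mass-derivative} also shows that $w(m)$ is $C^1$ on the open set of positive masses. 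Hence $F$ is $C^1$ on $(0,1)$, and for each $\varepsilon\in(0,1/2)$,
\begin{align*}
F(1-\varepsilon)-F(\varepsilon)=\int_\varepsilon^{1-\varepsilon}\bigl(w_q(s)-w_p(s)\bigr)\dd s.
\end{align*}

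It remains to show $F(\varepsilon)\to F(0)$ and $F(1-\varepsilon)\to F(1)$. At the endpoints the zero-mass site is dropped, and $W_2^2(\nu_0,\dd y)$ is exactly the cost with site $p$. So we need continuity of $s\mapsto W_2^2(\nu_s,\dd y)$ on $[0,1]$, including the endpoints where one mass vanishes. I would prove this with explicit competitors, as a two-sided Lipschitz bound $|F(s)-F(s')|\le C|s-s'|$ with $C=C(L,d)$.

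\emph{Upper bound.} Take an optimal plan for $\nu_{s'}$. To move to $\nu_s$ with $s>s'$, a volume $s-s'$ of the region sent to $p$ must be redirected to $q$. Any $y$ and any target satisfy $\rho_L\le \sqrt d L/2$, so the cost changes by at most $(s-s')dL^2/4$. \emph{Reverse bound.} The same redirection argument applied in the other direction, from an optimal plan for $\nu_s$, gives the opposite inequality. This is just the convexity-free fact that $W_2^2(\cdot,\dd y)$ is Lipschitz in total variation when the cost is bounded by $dL^2/4$: glue plans through the common part $\nu_s\wedge\nu_{s'}$, and reassign the mass difference arbitrarily at bounded cost.

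The estimate holds uniformly in $s,s'\in[0,1]$ and covers the endpoints. There, the zero-mass site simply receives no volume, and the measure $\nu_0$ coincides with the one-site-omitted measure. Letting $\varepsilon\downarrow0$ therefore gives the stated limit formula.

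The Lipschitz/TV continuity step is where care is needed, mainly in checking that the gluing is legitimate. It is not deep. The open-interval derivative is immediate from Lemma~\ref{lem:mass-derivative}. One could also note that the weights may blow up as $s\to0$ or $s\to1$ (for instance $w_q(s)\to-\infty$ as $s\to0$). This is exactly why the integral is written as a limit rather than as a Lebesgue integral over $[0,1]$. The argument never needs integrability of $w_q-w_p$ near the endpoints, since it works only with the truncated integrals and the continuity of $F$.
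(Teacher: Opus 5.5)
Your proof is correct and has the same skeleton as the paper's: the derivative from Lemma~\ref{lem:mass-derivative} in the direction $e_q-e_p$, the fundamental theorem of calculus on $[\varepsilon,1-\varepsilon]$, and continuity of $F$ on the closed interval. The only difference is the continuity step.

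\begin{itemize}
\item The paper couples $\nu_s$ and $\nu_t$ directly. It keeps the common mass fixed and moves mass $|s-t|$ between $p$ and $q$, so $W_2^2(\nu_s,\nu_t)\le|s-t|\rho_L(p,q)^2$. The triangle inequality for $W_2$ then gives $|\sqrt{F(s)}-\sqrt{F(t)}|\le|s-t|^{1/2}\rho_L(p,q)$.
\item You instead redirect mass in an optimal plan on the target side and use $\rho_L^2\le dL^2/4$. This gives a Lipschitz bound for $F$ itself, with an $L$-dependent constant.
\end{itemize}

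Your version avoids the metric property of $W_2$ and the gluing behind it. The paper's gives a modulus independent of $L$. Only qualitative continuity at fixed $L$ is needed, so either suffices.

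One correction to your closing remark, which your argument does not use: the weights do not blow up at the endpoints, and $w_q(s)\to-\infty$ as $s\downarrow0$ is false. For $s\in(0,1)$ every cell has positive volume, so it is nonempty. The potential $\phi_s$ is a maximum of $\sqrt d L$-Lipschitz functions with mean zero, so $\|\phi_s\|_\infty\le dL^2/2$. Taking any $y\in C_r(w(s))$ then gives $|w_r(s)|=|\phi_s(y)+\rho_L(y,r)^2|\le 3dL^2/4$.

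Hence $w_q-w_p$ is bounded on $(0,1)$. The limit form of the integral is harmless, but it is not forced by any lack of integrability. What degenerates near the endpoints is $H(w(s))^+$, and hence $w'(s)$, as the remark following the lemma in the paper explains.
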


\begin{proof}
For $0<s<1$, all prescribed masses are positive.
Lemma~\ref{lem:mass-derivative}, applied in the
direction $e_q-e_p$, gives
\begin{align*}
F'(s)=w_q(s)-w_p(s).
\end{align*}
The fundamental theorem of calculus therefore
yields, for $0<\varepsilon<1/2$,
\begin{align*}
F(1-\varepsilon)-F(\varepsilon)
=\int_\varepsilon^{1-\varepsilon}
\bigl(w_q(s)-w_p(s)\bigr)\dd s.
\end{align*}

Coupling the common mass identically and moving
the remaining mass between $p$ and $q$ gives
\begin{align*}
W_2^2(\nu_s,\nu_t)
\le |s-t|\rho_L(p,q)^2.
\end{align*}
Hence the triangle inequality yields
\begin{align*}
\left|\sqrt{F(s)}-\sqrt{F(t)}\right|
&\le W_2(\nu_s,\nu_t)\\
&\le |s-t|^{1/2}\rho_L(p,q).
\end{align*}
Thus $F$ is continuous on $[0,1]$.
Letting $\varepsilon\downarrow0$ proves
the integral identity.
\end{proof}

\begin{remark}
For every $s\in(0,1)$, the optimal cells have
positive masses and their adjacency graph
is connected. This guarantees the existence
and uniqueness of $H(w(s))^+$ as the inverse
on the zero-sum subspace, extended by zero
on constants.
The inverse may become unbounded near the
endpoints. We recover the endpoint costs
through the limits in Lemma~\ref{lem:split-paths},
without inverting $H$ there.
\end{remark}

When $z_1,\ldots,z_{n-1},p,q$ are independent
and uniform on $\T_L^d$, Efron--Stein gives
\begin{align*}
\Var(C_L)\le\frac n2\E|F(1)-F(0)|^2.
\end{align*}
We therefore want to bound
$\E|F(1)-F(0)|^2$ by some constant $C_d$.
The following lemma connects this quantity
to the derivatives of the optimal weights.

\begin{lemma}\label{lem:replacement-response-bound}
For the one-replacement interpolation,
\begin{align}\label{eq:replacement-response-bound}
\left(\E|F(1)-F(0)|^2\right)^{1/2}
&\le
\left(\E|w_q(1)|^2\right)^{1/2}
+\left(\E|w_p(0)|^2\right)^{1/2}\notag\\
&\quad+\int_0^1
\left(
\E\left[
\sum_{r\in\mathcal Z}m_r(s)
\left|\frac{\dd w_r(s)}{\dd s}\right|^2
\right]
\right)^{1/2}\dd s.
\end{align}
\end{lemma}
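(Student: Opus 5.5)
The plan is to start from the integral identity of Lemma~\ref{lem:split-paths} and rewrite each of $\int w_q$ and $\int w_p$ as an endpoint weight plus an integral of the weight derivative. The derivative will carry a factor equal to the transfer site's mass, so that it can be absorbed into the mass-weighted quantity $\sum_r m_r|\dd w_r/\dd s|^2$. The one fact about masses we use is that along the interpolation $m_q(s)=s$ and $m_p(s)=1-s$. On $[\varepsilon,1-\varepsilon]$ all masses are bounded away from zero, so by Lemma~\ref{lem:mass-derivative} and the inverse function theorem $s\mapsto w(s)$ is $C^1$ there. Integrating by parts with the antiderivatives $s$ and $-(1-s)$ gives
\begin{align*}
\int_\varepsilon^{1-\varepsilon}w_q(s)\dd s
&=(1-\varepsilon)w_q(1-\varepsilon)-\varepsilon w_q(\varepsilon)
-\int_\varepsilon^{1-\varepsilon}s\,w_q'(s)\dd s,\\
\int_\varepsilon^{1-\varepsilon}w_p(s)\dd s
&=(1-\varepsilon)w_p(\varepsilon)-\varepsilon w_p(1-\varepsilon)
+\int_\varepsilon^{1-\varepsilon}(1-s)\,w_p'(s)\dd s.
\end{align*}

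Next I would let $\varepsilon\downarrow0$. We need three limits: $w_q(1-\varepsilon)\to w_q(1)$, $w_p(\varepsilon)\to w_p(0)$, and $\varepsilon w_q(\varepsilon)\to0$, $\varepsilon w_p(1-\varepsilon)\to0$. The first two should follow from stability of the normalized dual potentials: the $\phi_s$ are uniformly Lipschitz with zero mean, hence precompact, and the dual optimizer at the endpoint is unique up to constants, so the normalization fixes it. For the last two, a crude bound suffices. Since the cell of $q$ is nonempty, $w_q(\varepsilon)$ is bounded in terms of $\sup|\phi_\varepsilon|$ and $\operatorname{diam}(\T_L^d)^2$, both bounded uniformly in $\varepsilon$ for fixed sites. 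Combined with Lemma~\ref{lem:split-paths}, this gives almost surely
\begin{align*}
F(1)-F(0)=w_q(1)-w_p(0)-\int_0^1\bigl(s\,w_q'(s)+(1-s)\,w_p'(s)\bigr)\dd s,
\end{align*}
with the integral understood as an improper limit (absolutely convergent once the bound below is in hand).

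For the pointwise estimate, put $a=\sqrt{m_q}|w_q'|$ and $b=\sqrt{m_p}|w_p'|$. Using $m_q=s$ and $m_p=1-s$ together with Cauchy--Schwarz,
\begin{align*}
s|w_q'|+(1-s)|w_p'|=\sqrt s\,a+\sqrt{1-s}\,b\le\sqrt{a^2+b^2}
\le\Bigl(\sum_{r\in\mathcal Z}m_r(s)|w_r'(s)|^2\Bigr)^{1/2}.
\end{align*}
Finally, take $L^2(\mathbb P)$ norms. Minkowski's inequality handles the three terms separately, and its integral form moves the $L^2$ norm inside the $\dd s$ integral; this yields exactly \eqref{eq:replacement-response-bound}. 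If the right-hand side is infinite there is nothing to prove. Otherwise monotone convergence justifies exchanging the $\varepsilon\downarrow0$ limit with the expectations.

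The main obstacle I expect is the endpoint analysis: continuity of the normalized weights as $s\to0,1$ and the vanishing of the boundary terms $\varepsilon w_q(\varepsilon)$. The inverse $H^+$ may blow up at the endpoints, so this must come from compactness and uniqueness of Kantorovich potentials rather than from the derivative formula. If this is delicate, I would first try to avoid limits entirely by applying the identity on $[\varepsilon,1-\varepsilon]$ with endpoint terms $w_q(1-\varepsilon)$ and $w_p(\varepsilon)$, and then use the Lipschitz bound on $\phi_s$ to pass to the limit in distribution.
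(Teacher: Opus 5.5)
Your proposal is correct and follows essentially the paper's argument: the paper substitutes $w_q(s)=w_q(1)-\int_s^1 w_q'(u)\dd u$ and $w_p(s)=w_p(0)+\int_0^s w_p'(u)\dd u$ into Lemma~\ref{lem:split-paths} and swaps the order of integration, which is your integration by parts, and then applies Jensen (equivalent to your Cauchy--Schwarz step with $m_q=s$, $m_p=1-s$) followed by Minkowski. Your explicit treatment of the endpoint limits ($w_q(1-\varepsilon)\to w_q(1)$, $w_p(\varepsilon)\to w_p(0)$, and the vanishing boundary terms $\varepsilon w_q(\varepsilon)$, $\varepsilon w_p(1-\varepsilon)$) fills in a continuity step that the paper uses without comment when it applies the fundamental theorem of calculus up to $s=0$ and $s=1$.
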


\begin{proof}
We may assume that the right-hand side is finite.
The fundamental theorem of calculus gives
\begin{align*}
w_q(s)&=w_q(1)-\int_s^1w_q'(u)\dd u,\\
w_p(s)&=w_p(0)+\int_0^s w_p'(u)\dd u.
\end{align*}
Substituting into Lemma~\ref{lem:split-paths}
and changing the order of integration yields
\begin{align*}
F(1)-F(0)
&=w_q(1)-w_p(0)\\
&\quad-\int_0^1
\bigl[s\,w_q'(s)+(1-s)w_p'(s)\bigr]\dd s.
\end{align*}
Since $m_q(s)=s$ and $m_p(s)=1-s$,
Jensen's inequality gives
\begin{align*}
\bigl|s\,w_q'(s)+(1-s)w_p'(s)\bigr|^2
&\le s|w_q'(s)|^2+(1-s)|w_p'(s)|^2\\
&\le\sum_{r\in\mathcal Z}m_r(s)|w_r'(s)|^2.
\end{align*}
Minkowski's inequality then gives
\begin{align*}
\left(\E|F(1)-F(0)|^2\right)^{1/2}
&\le
\left(\E|w_q(1)|^2\right)^{1/2}
+\left(\E|w_p(0)|^2\right)^{1/2}\\
&\quad+\int_0^1
\left(
\E|s\,w_q'(s)+(1-s)w_p'(s)|^2
\right)^{1/2}\dd s\\
&\le
\left(\E|w_q(1)|^2\right)^{1/2}
+\left(\E|w_p(0)|^2\right)^{1/2}\\
&\quad+\int_0^1
\left(
\E\sum_{r\in\mathcal Z}m_r(s)|w_r'(s)|^2
\right)^{1/2}\dd s,
\end{align*}
which proves \eqref{eq:replacement-response-bound}.
\end{proof}

The difficult part is to bound the integral in
\eqref{eq:replacement-response-bound}.
Once this is established, the endpoint moments
$\E|w_q(1)|^2$ and $\E|w_p(0)|^2$ follow from
the cell-radius estimates in Appendix \ref{app:radius-moments} and elementary
arguments in the proof of
Corollary~\ref{cor:variance-upper-bound}.

For two replacements, fix distinct sites
$z_1,\ldots,z_{n-2},p_1,q_1,p_2,q_2\in\T_L^d$
and define
\begin{align}\label{eq:two-transfer-measure}
 \nu_{s,t}
 &=\sum_{i=1}^{n-2}\delta_{z_i}
   +(1-s)\delta_{p_1}+s\delta_{q_1}
   +(1-t)\delta_{p_2}+t\delta_{q_2},\notag\\
 F(s,t)&=W_2^2(\nu_{s,t},\dd y),
 \qquad 0\le s,t\le1.
\end{align}
For $0<s,t<1$, denote the optimal weights by $w(s,t)$
and write $H_{s,t}:=H(w(s,t))$. On the boundary,
the zero-mass sites are omitted.

\begin{lemma}\label{lem:two-split-paths}
For $0<s,t<1$,
\begin{align}\label{eq:two-transfer-mixed-derivative}
 \partial_s\partial_t F(s,t)
 &=(e_{q_1}-e_{p_1})^{\mathsf T}
 H_{s,t}^{+}(e_{q_2}-e_{p_2}).
\end{align}
Moreover,
\begin{align}\label{eq:mixed-replacement-path}
 &F(1,1)-F(1,0)-F(0,1)+F(0,0)\notag\\
 &\qquad=
 \lim_{\varepsilon\downarrow0}
 \int_\varepsilon^{1-\varepsilon}
 \int_\varepsilon^{1-\varepsilon}
 (e_{q_1}-e_{p_1})^{\mathsf T}
 H_{s,t}^{+}(e_{q_2}-e_{p_2})
 \dd t\,\dd s.
\end{align}
\end{lemma}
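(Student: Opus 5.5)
For $0<s,t<1$ all prescribed masses are positive, so Lemma~\ref{lem:mass-derivative} applies at $m(s,t)$, the mass vector of $\nu_{s,t}$. Write $h_1:=e_{q_1}-e_{p_1}$ and $h_2:=e_{q_2}-e_{p_2}$; both are zero-sum directions, and $m(s,t)=m(0,0)+sh_1+th_2$ is affine in $(s,t)$. The chain rule and the first-derivative formula of Lemma~\ref{lem:mass-derivative} give
\begin{align*}
\partial_tF(s,t)=DF_{\mathcal Z}(m(s,t))[h_2]=\sum_{r}w_r(s,t)(h_2)_r
=w_{q_2}(s,t)-w_{p_2}(s,t).
\end{align*}
Differentiating once more in $s$ uses that $w(m)$ is $C^1$, with derivative $Dw(m)[h_1]$. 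This is the inverse function theorem step in the proof of Lemma~\ref{lem:mass-derivative}, which relies on continuity of $H$. The result is $\partial_s\partial_tF=h_2^{\mathsf T}Dw(m)[h_1]=h_2^{\mathsf T}H_{s,t}^+h_1$, because the constant correction in $Dw$ is annihilated by the zero-sum vector $h_2$. Symmetry of $H^+$ then gives \eqref{eq:two-transfer-mixed-derivative}. Since $D^2F_{\mathcal Z}$ is continuous in $m$, the mixed partial is continuous on the open square, so the order of differentiation does not matter.

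For \eqref{eq:mixed-replacement-path}, fix $0<\varepsilon<1/2$ and set $a=\varepsilon$, $b=1-\varepsilon$. The function $F$ is $C^2$ in the mixed sense on $[a,b]^2$, so integrating the mixed derivative twice by the fundamental theorem of calculus gives
\begin{align*}
F(b,b)-F(b,a)-F(a,b)+F(a,a)
=\int_a^b\int_a^b h_1^{\mathsf T}H_{s,t}^+h_2\dd t\,\dd s.
\end{align*}
The remaining task is to let $\varepsilon\downarrow0$ on the left-hand side. We need $F$ to be continuous on the closed square $[0,1]^2$, including the boundary. On the boundary the zero-mass sites are omitted, but this does not change the measure $\nu_{s,t}$.

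Continuity follows as in Lemma~\ref{lem:split-paths}. Couple the common mass identically and move the mass differences between $p_1$ and $q_1$ and between $p_2$ and $q_2$. This gives
\begin{align*}
W_2^2(\nu_{s,t},\nu_{s',t'})\le|s-s'|\rho_L(p_1,q_1)^2+|t-t'|\rho_L(p_2,q_2)^2.
\end{align*}
Together with the triangle inequality $|\sqrt{F(s,t)}-\sqrt{F(s',t')}|\le W_2(\nu_{s,t},\nu_{s',t'})$, this shows that $F$ is uniformly continuous on $[0,1]^2$. Each corner value $F(a\text{ or }b,\,a\text{ or }b)$ therefore converges to the corresponding value at $0$ or $1$, which proves \eqref{eq:mixed-replacement-path}.

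The only delicate point is the differentiability step. We need $w(m)$ to be $C^1$ jointly in $(s,t)$ in the interior, so that the mixed partial exists and the two-dimensional fundamental theorem of calculus applies. This holds because $m\mapsto w(m)$ is $C^1$ on the open set of positive masses by Lemma~\ref{lem:mass-derivative}, and $(s,t)\mapsto m(s,t)$ is affine. No bound on $H^+$ near the boundary is needed: the boundary values are recovered purely by continuity of $F$, exactly as in the remark following Lemma~\ref{lem:split-paths}.
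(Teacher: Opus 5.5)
Your proposal is correct and follows the paper's proof. You apply Lemma~\ref{lem:mass-derivative} in the two zero-sum transfer directions to get the mixed derivative, use the fundamental theorem of calculus on $[\varepsilon,1-\varepsilon]^2$ to get the four-corner identity, and then use continuity of $F$ on the closed square to let $\varepsilon\downarrow0$; your two-pair coupling bound $W_2^2(\nu_{s,t},\nu_{s',t'})\le|s-s'|\rho_L(p_1,q_1)^2+|t-t'|\rho_L(p_2,q_2)^2$ just spells out the boundary continuity the paper cites without detail, as in Lemma~\ref{lem:split-paths}.
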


\begin{proof}
For $0<s,t<1$, all masses are positive.
Lemma~\ref{lem:mass-derivative}, applied in the
directions $e_{q_1}-e_{p_1}$ and $e_{q_2}-e_{p_2}$,
gives \eqref{eq:two-transfer-mixed-derivative}.
The fundamental theorem of calculus yields
\begin{align*}
 &\int_\varepsilon^{1-\varepsilon}
 \int_\varepsilon^{1-\varepsilon}
 \partial_s\partial_t F(s,t)\dd t\,\dd s\\
 &\qquad=
 F(1-\varepsilon,1-\varepsilon)
 -F(1-\varepsilon,\varepsilon)
 -F(\varepsilon,1-\varepsilon)
 +F(\varepsilon,\varepsilon).
\end{align*}
Continuity of the cost up to the mass boundary
then gives \eqref{eq:mixed-replacement-path}.
\end{proof}

\subsection{The random walk on cells}
\label{sec:cell-walk}
Fix sites and masses as in Section~\ref{sec:split-paths}.
For one replacement, the site set is
\begin{align*}
 \mathcal Z=\{z_1,\ldots,z_{n-1},p,q\},
\end{align*}
with masses $m_{z_i}=1$, $m_p=1-s$, and $m_q=s$.
For two replacements, the site set is
\begin{align*}
 \mathcal Z=\{z_1,\ldots,z_{n-2},p_1,q_1,p_2,q_2\},
\end{align*}
with masses
\begin{align*}
 m_{z_i}=1,\qquad
 m_{p_1}=1-s,\quad m_{q_1}=s,\qquad
 m_{p_2}=1-t,\quad m_{q_2}=t.
\end{align*}

Consider the continuous-time random walk $(X_t)_{t\ge0}$
on $\mathcal Z$ that jumps from $r$ to $u$ at rate
$c_{ru}/m_r$. Its generator acts on
$f\in\R^{\mathcal Z}$ by
\begin{align}\label{eq:generator}
 (\mathcal Lf)_r
 :=\frac1{m_r}\sum_{u\in\mathcal Z}c_{ru}(f_u-f_r)
 =-\frac1{m_r}(Hf)_r.
\end{align}
Since $c_{ru}=c_{ur}$,
\begin{align*}
 m_r\frac{c_{ru}}{m_r}
 =m_u\frac{c_{ur}}{m_u}.
\end{align*}
Thus the walk is reversible with respect to the cell
volumes, and its stationary distribution is
$(m_r/n)_{r\in\mathcal Z}$.
Let $\mathbb P_r$ denote the law of the walk started
at $r$, conditional on the sites and masses. Define
\begin{align*}
 p_t(r,u)&:=\frac{\mathbb P_r(X_t=u)}{m_u},\\
 \overline p_t(r,u)&:=p_t(r,u)-\frac1n.
\end{align*}
Reversibility and conservation of probability give
\begin{align*}
 p_t(r,u)=p_t(u,r),
 \qquad
 \sum_{u\in\mathcal Z}m_up_t(r,u)=1.
\end{align*}
Since the cell adjacency graph is connected,
\begin{align*}
 p_t(r,u)\longrightarrow\frac1n
 \qquad\text{as }t\to\infty.
\end{align*}

For one replacement, the walk represents the weight
and potential derivatives as follows.

\begin{lemma}\label{lem:walk-response}
For $0<s<1$ and $r\in\mathcal Z$,
\begin{align*}
 \frac{d}{ds}w_r(s)
 =\int_0^\infty
 \bigl(p_t(r,q)-p_t(r,p)\bigr)\dd t.
\end{align*}
Moreover, let $\phi_s = \phi_{w(s)}$ and the potential derivative
$\psi_s:=\frac{d}{ds}\phi_s$ satisfies, almost everywhere,
\begin{align*}
 \psi_s(y)&=\frac{d}{ds}w_r(s),
 \qquad y\in C_r(w(s)),\\
 \|\psi_s\|_{L^2(\T_L^d)}^2
 &=\sum_{r\in\mathcal Z}m_r
 \left|\frac{d}{ds}w_r(s)\right|^2.
\end{align*}
\end{lemma}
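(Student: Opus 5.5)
The plan is to start from Lemma~\ref{lem:mass-derivative}, applied in the direction $h=e_q-e_p$. Since $m(s)=m+s(e_q-e_p)$ up to the reference point, this gives
\begin{align*}
\frac{d}{ds}w(s)=H^+(e_q-e_p)-\frac1n\Bigl(\sum_u m_u\bigl(H^+(e_q-e_p)\bigr)_u\Bigr)\1 .
\end{align*}
Equivalently, $v:=w'(s)$ is the unique solution of $Hv=e_q-e_p$ with $\sum_u m_uv_u=0$. The uniqueness holds because the kernel of $H$ consists of constants; this uses connectedness from Lemma~\ref{lem:mass-derivative}.

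It therefore suffices to check that the vector
\begin{align*}
g_r:=\int_0^\infty\bigl(p_t(r,q)-p_t(r,p)\bigr)\dd t
\end{align*}
satisfies the same two conditions.

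\emph{Convergence.} The walk is a finite, irreducible, reversible chain. Hence $p_t(r,u)-1/n$ decays exponentially, with rate given by the spectral gap of $D^{-1}H$ on the $m$-weighted mean-zero subspace. The difference $p_t(r,q)-p_t(r,p)=\overline p_t(r,q)-\overline p_t(r,p)$ is therefore integrable.

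\emph{The equation $Hg=e_q-e_p$.} Write $f_t(r):=p_t(r,q)-p_t(r,p)=(e^{t\mathcal L}D^{-1}(e_q-e_p))_r$. The Kolmogorov equation gives $\partial_t f_t=\mathcal L f_t=-D^{-1}Hf_t$. Integrating from $0$ to $T$ gives
\begin{align*}
f_T-f_0=-D^{-1}H\int_0^T f_t\dd t .
\end{align*}
As $T\to\infty$, $f_T\to 0$, because both terms tend to $1/n$. Also $f_0=D^{-1}(e_q-e_p)$, since $p_0(r,u)=\1_{r=u}/m_u$. Multiplying by $D$ yields $Hg=e_q-e_p$.

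\emph{The normalization.} Using $\sum_r m_rp_t(r,u)=\sum_r m_rp_t(u,r)=1$ by symmetry, we get $\sum_r m_rf_t(r)=0$ for every $t$. Integrating in $t$ gives $\sum_r m_rg_r=0$, with the interchange justified by exponential decay. Hence $g=w'(s)$.

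\emph{The potential derivative.} On the interior of each cell $C_r(w(s))$ the maximizer in $\phi_s$ is $r$, and it is unique almost everywhere. Since $w$ is $C^1$ in $s$, the maximizer stays fixed locally, so $\psi_s(y)=w_r'(s)$ there. The cell boundaries have measure zero. Squaring and integrating cell by cell, with $|C_r|=m_r$, gives the $L^2$ identity. The same observation recovers the normalization from $\frac{d}{ds}\int\phi_s=0$, which serves as a consistency check.

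The main point needing care is the local stability of the maximizer, that is, that $\phi_s$ is differentiable in $s$ at almost every $y$. Points where two weighted costs tie form the moving faces, and for each fixed $s$ they are a null set. Differentiability is only needed for fixed $s$ at almost every $y$, where the strict inequality $w_r-\rho_L^2>w_u-\rho_L^2$ persists for nearby $s$. Apart from this, everything is finite-dimensional linear algebra.
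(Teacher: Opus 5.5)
Your proof is correct, and it is essentially the paper's argument run in the opposite direction. The paper applies Dynkin's formula to the already-known solution $w'(s)$ of $\mathcal L w'(s)=-D^{-1}(e_q-e_p)$, then removes the boundary term $\mathbb E_r[w'_{X_T}(s)]$ using convergence to stationarity and the normalization $\sum_r m_r w_r'(s)=0$. You instead integrate the Kolmogorov equation for $e^{t\mathcal L}D^{-1}(e_q-e_p)$ to show that the time integral solves the same linear system, and then invoke uniqueness (the kernel of $H$ is the constants); this is the resolvent representation sketched in the paper's proof-strategy section. Your treatment of $\psi_s$, via a.e.\ uniqueness of the maximizer and continuity of $w(s)$, is the same as the paper's.
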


\begin{proof}
Differentiating the weight in Lemma~\ref{lem:mass-derivative} and the normalization gives
\begin{align*}
 H\frac{d}{ds}w(s)=e_q-e_p,
 \qquad
 \sum_{r\in\mathcal Z}m_r\frac{d}{ds}w_r(s)=0.
\end{align*}
Hence
\begin{align*}
 \left(\mathcal L\frac{d}{ds}w(s)\right)_u
 =-\frac{\1_{\{u=q\}}}{m_q}
  +\frac{\1_{\{u=p\}}}{m_p}.
\end{align*}
Dynkin's formula yields
\begin{align*}
 \frac{d}{ds}w_r(s)
 -\mathbb E_r\left[
 \frac{d}{ds}w_{X_T}(s)
 \right]
 &=\mathbb E_r\left[
 \int_0^T
 \left(
 \frac{\1_{\{X_t=q\}}}{m_q}
 -\frac{\1_{\{X_t=p\}}}{m_p}
 \right)\dd t
 \right]\\
 &=\int_0^T
 \bigl(p_t(r,q)-p_t(r,p)\bigr)\dd t.
\end{align*}
The walk is finite and irreducible, so its distribution
converges to the stationary distribution $(m_u/n)_{u\in\mathcal Z}$.
Therefore,
\begin{align*}
\mathbb E_r\left[
 \frac{d}{ds}w_{X_T}(s)
\right]
\xrightarrow[T\to\infty]{}
\frac1n\sum_{u\in\mathcal Z}m_u\frac{d}{ds}w_u(s)=0.
\end{align*}
Letting $T\to\infty$ proves the first identity.

At almost every $y\in C_r(w(s))$, the maximizing site
in the definition of $\phi_s(y)$ is unique.
Differentiating gives
$\psi_s(y)=\frac{d}{ds}w_r(s)$.
Since $|C_r|=m_r$,
\begin{align*}
 \|\psi_s\|_{L^2(\T_L^d)}^2
 &=\sum_{r\in\mathcal Z}\int_{C_r}
 \left|\frac{d}{ds}w_r(s)\right|^2\dd y\\
 &=\sum_{r\in\mathcal Z}m_r
 \left|\frac{d}{ds}w_r(s)\right|^2.
\end{align*}
\end{proof}

By symmetry and the semigroup property, for every
$o\in\mathcal Z$,
\begin{align*}
 \sum_{r\in\mathcal Z}m_r
 \left(\int_0^\infty\overline p_t(o,r)\dd t\right)^2
 &=\int_0^\infty\int_0^\infty
 \overline p_{a+b}(o,o)\dd a\,\dd b\\
 &=\int_0^\infty t\,\overline p_t(o,o)\dd t.
\end{align*}
For one replacement, Lemma~\ref{lem:walk-response} and
$(a-b)^2\le2a^2+2b^2$ therefore give
\begin{align}\label{eq:response-return-bound}
 \|\psi_s\|_{L^2(\T_L^d)}^2
 &=\sum_{r\in\mathcal Z}m_r
 \left|
 \int_0^\infty
 \bigl(\overline p_t(q,r)-\overline p_t(p,r)\bigr)\dd t
 \right|^2\notag\\
 &\le2\sum_{o\in\{p,q\}}\sum_{r\in\mathcal Z}m_r
 \left(\int_0^\infty\overline p_t(o,r)\dd t\right)^2
 \notag\\
 &=2\int_0^\infty t
 \bigl(\overline p_t(q,q)+\overline p_t(p,p)\bigr)\dd t.
\end{align}
Our next goal is to bound the centered return density
$\overline p_t(o,o)$ for both the one-replacement and
two-replacement configurations.

\subsection{Poincar\'e inequality and large-time decay}
\label{sec:large-time}

We first control the return density for large times.
The following inequality is a discrete analogue of
the Poincar\'e inequality on $\T_L^d$ for functions
that are constant on each transport cell.
We prove it by comparing their values along line
segments, a method that we will also use later.

\begin{lemma}[Poincar\'e inequality]
\label{lem:cell-poincare}
For every $f\in\R^{\mathcal Z}$,
\begin{align}\label{eq:cell-poincare}
 \sum_{r\in\mathcal Z}m_r|f_r-\overline f_m|^2
 \le C_dL^2f^{\mathsf T}Hf,
 \qquad
 \overline f_m:=\frac1n\sum_{r\in\mathcal Z}m_rf_r.
\end{align}
\end{lemma}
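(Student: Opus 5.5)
Set $g(y):=f_r$ for $y\in C_r$, a piecewise-constant function on $\T_L^d$ with jumps only across the interfaces $\Sigma_{ru}$. Since $|C_r|=m_r$, the left side of \eqref{eq:cell-poincare} equals $\int_{\T_L^d}|g-\overline g|^2\dd y$, where $\overline g=n^{-1}\int g=\overline f_m$. So it suffices to bound this continuum variance. Write it as
\begin{align*}
\int_{\T_L^d}|g-\overline g|^2\dd y
=\frac1{2n}\int_{\T_L^d}\int_{\T_L^d}|g(x)-g(y)|^2\dd x\,\dd y .
\end{align*}
We compare values along segments. For each pair $(x,y)$, choose a shortest segment $\gamma_{x,y}$ on the torus, of length at most $\sqrt d L/2$. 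For a.e.\ pair the segment crosses finitely many faces transversally and avoids lower-dimensional skeleta. Then $g(x)-g(y)$ is the sum of the jumps $f_r-f_u$ at the crossings. By Cauchy--Schwarz with weights,
\begin{align*}
|g(x)-g(y)|^2\le\Bigl(\sum_{F\text{ crossed}}\ell_F\Bigr)\Bigl(\sum_{F\text{ crossed}}\frac{|f_r-f_u|^2}{\ell_F}\Bigr),
\end{align*}
where $F\subset\Sigma_{ru}$ and $\ell_F=|r-u|$.

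The first key step is to bound $\sum_{\text{crossed}}\ell_F$ by $C|\gamma_{x,y}|\le CL$. This needs some geometric input. A consecutive crossing of faces $F\subset\Sigma_{ru}$, then $\Sigma_{uv}$, and so on, visits cells $r,u,v,\dots$ along the segment. The distance $\ell_F=|r-u|$ is comparable to the segment length spent in cells near $r$ and $u$ only if the cells have bounded diameter, which fails in general. Instead, I would avoid this by using the alternative weighting $\ell_F$ in the other factor. This is the main obstacle. My first try is a cruder route: by the triangle inequality, $\ell_F=|r-u|\le \operatorname{dist}(r,\gamma)+\operatorname{dist}(u,\gamma)+(\text{length between crossings})$. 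I expect, however, that the clean argument averages over translations so that only the second factor matters.

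That averaging is the second step. Integrate the second factor over $(x,y)$, reparametrizing by $x$ and the displacement $v=y-x$ with $|v|\le \sqrt dL/2$. For fixed $v$, the set of $x$ whose segment $[x,x+v]$ crosses a given face $F$ has volume at most $\mathcal H^{d-1}(F)|v|$. This is a Fubini/Crofton bound, since the sweep of $F$ in direction $v$ has volume $\mathcal H^{d-1}(F)\,|v\cdot\nu_F|$. Hence
\begin{align*}
\int\int\sum_{F\text{ crossed}}\frac{|f_r-f_u|^2}{\ell_F}\dd x\,\dd y
\le \int_{|v|\le CL}|v|\dd v\sum_F\frac{\mathcal H^{d-1}(F)}{\ell_F}|f_r-f_u|^2
\le C L^{d+1}\,f^{\mathsf T}Hf,
\end{align*}
using $\sum_F \mathcal H^{d-1}(F)|f_r-f_u|^2/(2\ell_F)\cdot 2=2\cdot$ (each unordered face once) $\cdot$, which equals $2f^{\mathsf T}Hf$ up to constants.

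To make the product structure work, I would not use the product Cauchy--Schwarz bound above. I would instead aim directly for the weighted Cauchy--Schwarz in the form $|g(x)-g(y)|^2\le N\sum|f_r-f_u|^2$ with $N$ the number of crossings, or in the form with weights $\ell_F$. The needed bound is $\sum_{\text{crossed}}\ell_F\le C L$ uniformly, because then $n^{-1}\cdot CL\cdot CL^{d+1}=C L^2$ with $n=L^d$, giving exactly \eqref{eq:cell-poincare}.

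To justify $\sum_{\text{crossed}}\ell_F\le CL$, I would use that $r-u$ is normal to $F$ and that the cells are the weighted power (Laguerre) cells. Along the segment $\gamma(\tau)=x+\tau e$, the function $\tau\mapsto$ (index of the maximizing site) changes at crossings. At a crossing from $C_r$ to $C_u$, the derivative of $w_\cdot-|\gamma(\tau)-\cdot|^2$ jumps by $2e\cdot(u-r)>0$. Thus $\phi_w\circ\gamma$ is convex in $\tau$ up to the $-\tau^2$ term, and its slope increases by $2e\cdot(u-r)$ at each crossing. Summing, $\sum 2|e\cdot(u-r)|\le$ the total slope variation, which is $O(L)$ if sites lie within distance $O(L)$ of the segment. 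This gives only normal components $e\cdot(u-r)$, not $\ell_F$, so the weights should be matched to $|e\cdot\nu_F|$. Conveniently, the Crofton volume in the second step already contains the factor $|v\cdot\nu_F|$. Hence I would run Cauchy--Schwarz with weights $|e\cdot(u-r)|=\ell_F|e\cdot\nu_F|$. Then the first factor is bounded by the slope telescoping, $O(L)$ uniformly in the torus geometry since $|\nabla\phi|$ is controlled by distances at most $\sqrt dL$. The second factor integrates to $\sum_F|f_r-f_u|^2\mathcal H^{d-1}(F)|v\cdot\nu_F|/(\ell_F|e\cdot\nu_F|)$, which is $\le |v|\,c$-weighted, closing the estimate. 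The slope telescoping on the torus, where shortest-representative issues arise, is the step I expect to require the most care.
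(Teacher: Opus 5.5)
Your final argument is correct and is essentially the paper's proof. You use Cauchy--Schwarz with weights $\ell_F(\nu_F\cdot e)=e\cdot(u-r)>0$ and bound their sum by the slope jumps of the convex function $\tau\mapsto\phi_w(\gamma(\tau))+\tau^2$; this is exactly the paper's telescoping identity $\sum_F\ell_F(\nu_F\cdot e)=(a_N-a_0)\cdot e\le C_dL$. You then integrate over $x$ with crossing multiplicity $|v|\,|\nu_F\cdot e|\,\mathcal H^{d-1}(F)$ and average over displacements in $[-L/2,L/2]^d$, as the paper does. The earlier detours (weights $\ell_F$ alone, or the crossing count $N$) are not needed and, as you suspected, would not give a uniform bound, so simply drop them.
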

\begin{proof}
Define $V(y)=f_r$ on $C_r$, so that
\begin{align*}
\int_{\T_L^d}V(y)\dd y
=\sum_{r\in\mathcal Z}m_rf_r
=n\overline f_m.
\end{align*}

Fix $h\in[-L/2,L/2]^d\setminus\{0\}$ and put
$e=h/|h|$. Follow the segment from a lift of $y$
to $y+h$ through the lifted cells.
At a crossing from a cell with generator $a$
into one with generator $b$, recall
\begin{align*}
\ell_F=|b-a|,
\qquad
\nu_F=\frac{b-a}{|b-a|}.
\end{align*}
The vector $\nu_F$ is normal to their common face
and points from the former cell into the latter,
so $\nu_F\cdot e>0$.

Let $a_0$ and $a_N$ be the generators of the cells
containing the two endpoints. The generator increments
telescope, giving
\begin{align*}
\sum_{\text{crossed }F}\ell_F(\nu_F\cdot e)
=(a_N-a_0)\cdot e \leq C_dL.
\end{align*}

Writing $\Delta_Ff$ for the change in cell value
at a crossing, Cauchy--Schwarz gives, for almost
every $y$,
\begin{align*}
|V(y+h)-V(y)|^2
&=\left|\sum_{\text{crossed }F}\Delta_Ff\right|^2\\
&\le
\left(\sum_{\text{crossed }F}\ell_F(\nu_F\cdot e)\right)
\left(\sum_{\text{crossed }F}
\frac{|\Delta_Ff|^2}{\ell_F(\nu_F\cdot e)}\right)\\
&\le C_dL\sum_{\text{crossed }F}
\frac{|\Delta_Ff|^2}{\ell_F(\nu_F\cdot e)}.
\end{align*}
If the segment stays in one cell, both sides vanish.

Integrating over $y$ counts crossings of each face
piece $F$ with total multiplicity
$|h|\,|\nu_F\cdot e|\,\mathcal H^{d-1}(F)$.
Consequently,
\begin{align*}
\int_{\T_L^d}|V(y+h)-V(y)|^2\dd y
&\le C_dL|h|
\sum_F\frac{\mathcal H^{d-1}(F)}{\ell_F}
|\Delta_Ff|^2\\
&\le C_dL|h|\,f^{\mathsf T}Hf.
\end{align*}
The sum counts each face piece once. Averaging over $h\in[-L/2,L/2]^d$ yields
\begin{align*}
2\int_{\T_L^d}|V(y)-\overline f_m|^2\dd y
&=\frac1n\int_{[-L/2,L/2]^d}
\int_{\T_L^d}|V(y+h)-V(y)|^2\dd y\,\dd h\\
&\le C_dL^2f^{\mathsf T}Hf.
\end{align*}
Since $|C_r|=m_r$, this proves
\eqref{eq:cell-poincare}.
\end{proof}

Fix $o\in\mathcal Z$ and write
$f_t(r)=\overline p_t(o,r)$. Symmetry and the
semigroup property give
\begin{align*}
\overline p_{2t}(o,o)
=\sum_{r\in\mathcal Z}m_rf_t(r)^2,
\qquad
\overline f_t = \sum_{r\in\mathcal Z}m_rf_t(r)=0.
\end{align*}
Since $\partial_tf_t=\mathcal Lf_t$ and by \eqref{eq:generator}, we obtain
\begin{align}\label{eq:heat-energy}
\frac{d}{dt}\overline p_{2t}(o,o)
&=2\sum_{r\in\mathcal Z}m_rf_t(r)
  (\mathcal Lf_t)_r\notag\\
&=-2f_t^{\mathsf T}Hf_t.
\end{align}
The Poincar\'e inequality \eqref{eq:cell-poincare} therefore implies
\begin{align*}
\frac{d}{dt}\overline p_{2t}(o,o)
\le-\frac{c_d}{L^2}\overline p_{2t}(o,o).
\end{align*}
Integrating and adjusting $c_d$ gives
\begin{align}\label{eq:return-large-time}
\overline p_t(o,o)
\le e^{-c_d(t-t_0)/L^2}\overline p_{t_0}(o,o),
\qquad t\ge t_0\ge0.
\end{align}
Thus it remains to estimate the return density
up to time $L^2$.

\subsection{Small-time estimates}
\label{sec:small-time}

We next estimate the return density for $t\le L^2$.
We first control the probability that the walk
leaves a ball.

For each $r\in\mathcal Z$, define
\begin{align*}
R_r:=\sup_{y\in C_r}\rho_L(y,r).
\end{align*}
Fix $o\in\mathcal Z$ and
$1\le\ell\le R/16$, with $R\le L/16$.
Let $G(o;R,\ell)$ be the event that
\begin{align*}
R_r\le\ell
\quad\text{whenever}\quad
r\in B_{2R}(o)
\ \text{or}\ 
C_r\cap B_{2R}(o)\ne\varnothing.
\end{align*}
Let
\begin{align*}
\tau_R:=\inf\{t\ge0:\rho_L(X_t,o)\ge R\}
\end{align*}
be the first exit time from $B_R(o)$.
On $G(o;R,\ell)$, every cell visited before
$\tau_R$ has radius at most $\ell$ and lies in
$B_{R+\ell}(o)$. Consequently, every jump up to
and including the exit has length at most $2\ell$.

Our plan is to separate paths that remain in $B_R(o)$
from those that leave it. Since
$m_o\overline p_t(o,o)=\mathbb P_o(X_t=o)-m_o/n$,
\begin{align}\label{eq:return-exit-decomposition}
m_o\overline p_t(o,o)
&\le \mathbb P_o(X_t=o,\ t<\tau_R)
   +\mathbb P_o(X_t=o,\ \tau_R\le t)\notag\\
&\le \mathbb P_o(X_t=o,\ t<\tau_R)
   +\mathbb P_o(\tau_R\le t).
\end{align}
We will estimate the first term using the walk killed
upon leaving the ball. For the second term, we first
prove an exit estimate and then apply it successively
to smaller balls to obtain a sufficiently high
polynomial decay rate. Both terms must decay fast
enough to control the integral
in \eqref{eq:response-return-bound}.

\begin{lemma}\label{lem:walk-exit}
On $G(o;R,\ell)$, for every $t\ge0$,
\begin{align}\label{eq:walk-exit}
\mathbb P_o(\tau_R\le t)
\le\frac{2dt+6\ell^2}{R^2}.
\end{align}
\end{lemma}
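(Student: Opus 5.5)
The plan is to run a Dynkin/optional-stopping argument with a test function that behaves like $\rho_L(r,o)^2$. On $G(o;R,\ell)$, every site and cell visited up to $\tau_R$ lies in $B_{R+\ell}(o)\subset B_{L/4}(o)$. So I will lift a neighbourhood of $o$ to $\R^d$ and work with Euclidean coordinates, where $\rho_L(r,o)=|r-o|$. I look for $f\in\R^{\mathcal Z}$ (only its values on sites visited before or at the exit matter) such that
\begin{align*}
(\mathcal Lf)_r\le 2d\quad\text{for } r\in B_R(o),\qquad
f_o\le C\ell^2,\qquad
f_r\ge |r-o|^2-C'\ell^2\ \text{for every site reachable by the exit jump},
\end{align*}
with $C+C'\le 6$. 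Dynkin's formula for the bounded stopping time $\tau_R\wedge t$ then gives $\E_o f_{X_{\tau_R\wedge t}}\le f_o+2d\,\E_o[\tau_R\wedge t]\le C\ell^2+2dt$. On $\{\tau_R\le t\}$ we have $|X_{\tau_R}-o|\ge R$, so $f_{X_{\tau_R}}\ge R^2-C'\ell^2$. On the complement, $f\ge -C'\ell^2$. Rearranging gives \eqref{eq:walk-exit}. If $f$ turns out not to be nonnegative, the lower bound on $\{\tau_R>t\}$ costs one more $\ell^2$; this is where I expect the constant $6$ to come from.

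The key computation is the drift identity, which comes from the divergence theorem on each cell. Write $\nu_F=(u-r)/|u-r|$ for the face $F$ between $C_r$ and $C_u$. Then $c_{ru}(u-r)=\tfrac12\sum_{F}\mathcal H^{d-1}(F)\nu_F$, and summing over the closed polytope $\partial C_r$ gives
\begin{align*}
\sum_u c_{ru}(u-r)=\tfrac12\int_{\partial C_r}\nu\,\dd\mathcal H^{d-1}=0 .
\end{align*}
So linear functions are harmonic for $\mathcal L$; the walk has no drift. For the quadratic part, $|u-o|^2-|r-o|^2=2(r-o)\cdot(u-r)+|u-r|^2$, so
\begin{align*}
(\mathcal L|\cdot-o|^2)_r=\frac1{m_r}\sum_F\frac{\mathcal H^{d-1}(F)\,|u-r|}{2}.
\end{align*}
Meanwhile $\int_{\partial C_r}(y-r)\cdot\nu=d\,m_r$, and on $F$ one has $(y-r)\cdot\nu_F=\tfrac{|u-r|}2+\tfrac{w_r-w_u}{2|u-r|}$ by \eqref{eq:face-equation}. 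Hence
\begin{align*}
\bigl(\mathcal L(|\cdot-o|^2-w)\bigr)_r=\frac1{m_r}\int_{\partial C_r}(y-r)\cdot\nu\,\dd\mathcal H^{d-1}=d .
\end{align*}
So the corrected function $g_r:=|r-o|^2-(w_r-w_o)$ satisfies $\mathcal Lg=d$ exactly.

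The remaining, and in my view main, step is to show that $g_r$ differs from $|r-o|^2$ by at most a constant multiple of $\ell^2$, or more realistically by at most $\tfrac12|r-o|^2+O(\ell^2)$. In the latter case I would replace $g$ by a suitable multiple, which is why the statement has $2d$ rather than $d$. The tool is the potential. On $C_r$, $\phi_w(y)=w_r-|y-r|^2$, and $\phi_w$ is continuous across faces. Following a segment from $o$ to $r$ through cells of radius at most $\ell$, the increments of $w_u-|y-u|^2$ telescope. Taking $y\in C_r$ and $y'\in C_o$ gives
\begin{align*}
w_r-w_o=\phi_w(y)-\phi_w(y')+|y-r|^2-|y'-o|^2 .
\end{align*}
Here the last two terms are at most $\ell^2$ in absolute value. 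Then $x\mapsto|x|^2+\phi_w(x)$ is convex: it is a maximum of affine functions, being $\max_u\{w_u-|u|^2+2x\cdot u\}$. Also $|\nabla\phi_w(y)|=2|y-r|\le2\ell$ on $C_r$. Together these should let me compare $\phi_w(y)-\phi_w(y')$ with $-|y-o|^2$ up to errors controlled by $\ell$ and $|y-o|$. If this control is only one-sided, I will instead use the convexity to get the one-sided inequality I actually need, namely the lower bound $f_r\ge|r-o|^2-O(\ell^2)$ at exit. The initial bound $f_o=0$ is trivial. I will then tune the combination of $|r-o|^2$ and $g$ so that the generator bound is $2d$ and the endpoint errors total at most $6\ell^2$.
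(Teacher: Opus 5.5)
Your framework matches the paper's. You use the corrected distance $g_r=|r-o|^2-(w_r-w_o)$ with $\mathcal Lg=d$, obtained from $\sum_u c_{ru}(u-r)=0$ and the face identity exactly as you do, followed by Dynkin at $t\wedge\tau_R$. Your choice $f=2g$ is equivalent to the paper's absorption $2\ell|X_T-o|\le\tfrac12|X_T-o|^2+2\ell^2$. The gap is the step you yourself call the main one: it is left as an intention, and the mechanism you sketch for it points the wrong way.

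The exit argument needs an \emph{upper} bound $w_r-w_o\le\tfrac12|r-o|^2+O(\ell^2)$ at every site the walk can occupy at time $t\wedge\tau_R$. Convexity of $|x|^2+\phi_w$, whose supporting-hyperplane inequalities are just $\phi_w\ge w_u-|\cdot-u|^2$, cannot supply this. At $y\in C_r$ it gives $w_r-w_o\ge|y-r|^2-|y-o|^2$. That is a lower bound on $w_r-w_o$, hence an upper bound on $g_r$, and it is the comparison of $\phi_w(y)-\phi_w(y')$ with $-|y-o|^2$ that convexity provides. At $y'\in C_o$ it gives only $w_r-w_o\le|y'-r|^2-|y'-o|^2\le|r-o|^2+2\ell|r-o|$, i.e.\ $g_r\ge-2\ell|r-o|$, which is useless at the exit. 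So your fallback of using convexity for the one-sided inequality would fail.

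What is needed is the ingredient you already listed, used directly. On $G(o;R,\ell)$ you have $|\nabla\phi_w|\le2\ell$ a.e.\ on $B_{2R}(o)$, so $\phi_w$ is $2\ell$-Lipschitz there. Every site reachable up to $\tau_R$ lies in $B_{R+2\ell}(o)$, so the relevant segments and cells stay in $B_{2R}(o)$. To get the stated constant, evaluate $\phi_w$ at the sites rather than at $y\in C_r$, $y'\in C_o$. For $y\in C_r$,
$0\le\phi_w(r)-w_r=\phi_w(r)-\phi_w(y)-|y-r|^2\le2\ell|y-r|-|y-r|^2\le\ell^2$, and likewise at $o$. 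Hence $|w_r-w_o|\le2\ell|r-o|+\ell^2\le\tfrac12|r-o|^2+3\ell^2$. Then $f=2g$ has $f_o=0$, $\mathcal Lf=2d$ on $B_R(o)$, and $f_r\ge|r-o|^2-6\ell^2$, which gives exactly $(2dt+6\ell^2)/R^2$.

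Applying the Lipschitz bound along the segment from $y$ to $y'$, as you propose, yields in general only $w_r-w_o\le2\ell|r-o|+4\ell^2$, hence $12\ell^2$ in place of $6\ell^2$. That is harmless for Corollary~\ref{cor:walk-exit-iterated}, but it is not the inequality as stated.
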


\begin{proof}
Work on $G(o;R,\ell)$ and use consistent lifts of
the sites and cells near $o$ to $\R^d$.
We use the same symbols for these lifts.

Consider a cell $C_r$ with $r\in B_R(o)$, and let
$\nu(y)$ denote its outward unit normal at
$y\in\partial C_r$. On a face $F$ shared with $C_u$,
\begin{align*}
\nu(y)=\nu_F=\frac{u-r}{|u-r|}.
\end{align*}
By the definition of $c_{ru}$,
\begin{align*}
\sum_u c_{ru}(u-r)
&=\frac12\sum_{F\subset\partial C_r}
\mathcal H^{d-1}(F)\nu_F\\
&=\frac12\int_{\partial C_r}\nu(y)\,
\dd\mathcal H^{d-1}(y)
=0.
\end{align*}
The last equality follows by applying the divergence
theorem. For $y$ on the face $F$ shared by $C_r$ and $C_u$,
both sites attain the maximum defining $\phi_w(y)$.
Hence
\begin{align*}
w_r-|y-r|^2=w_u-|y-u|^2.
\end{align*}
Expanding and rearranging gives
\begin{align*}
2(y-r)\cdot(u-r)=|u-r|^2+w_r-w_u.
\end{align*}
Integrating over the faces and applying the
divergence theorem again to $y-r$ yields
\begin{align*}
d m_r
&=\int_{\partial C_r}(y-r)\cdot\nu\,
  \dd\mathcal H^{d-1}\\
&=\sum_u c_{ru}\bigl(|u-r|^2+w_r-w_u\bigr)\\
&=\sum_u c_{ru}|u-r|^2+(Hw)_r.
\end{align*}
For the fixed point $o$, put $f(u)=|u-o|^2-w_u$.
Since
\begin{align*}
|u-o|^2-|r-o|^2
=|u-r|^2+2(r-o)\cdot(u-r),
\end{align*}
the preceding identities give
\begin{align}\label{eq:corrected-distance}
(\mathcal Lf)_r
&=\frac1{m_r}\sum_u c_{ru}
\bigl(|u-o|^2-|r-o|^2-w_u+w_r\bigr)
\notag\\
&=\frac1{m_r}\left(
\sum_u c_{ru}|u-r|^2
+2(r-o)\cdot\sum_u c_{ru}(u-r)
+(Hw)_r
\right)
\notag\\
&=\frac1{m_r}\left(
\sum_u c_{ru}|u-r|^2+(Hw)_r
\right)
=d.
\end{align}

We next control the weight difference.
For almost every $y\in B_{2R}(o)$, let $C_u$
be the cell containing $y$. Since this cell meets
$B_{2R}(o)$, we have
\begin{align*}
|\nabla\phi_w(y)|
=2\rho_L(y,u)
\le2R_u
\le2\ell.
\end{align*}
Thus $\phi_w$ is $2\ell$-Lipschitz there.
For any site $r$ visited up to and including
the exit, choose $y\in C_r$. Since
$|y-r|\le\ell$ and
$\phi_w(y)=w_r-|y-r|^2$, we have
\begin{align*}
0\le\phi_w(r)-w_r
&=\phi_w(r)-\phi_w(y)-|y-r|^2\\
&\le2\ell|y-r|-|y-r|^2
\le\ell^2.
\end{align*}
The same holds at $o$. Writing
$w_r=\phi_w(r)-(\phi_w(r)-w_r)$ and applying
the triangle inequality gives
\begin{align*}
|w_r-w_o|\le2\ell|r-o|+\ell^2.
\end{align*}

Put $T=t\wedge\tau_R$. Dynkin's formula applied
to \eqref{eq:corrected-distance} and
$2xy\le\frac12x^2+2y^2$ gives
\begin{align*}
\mathbb E_o|X_T-o|^2
&=d\mathbb E_oT+\mathbb E_o(w_{X_T}-w_o)\\
&\le dt+2\ell\mathbb E_o|X_T-o|+\ell^2\\
&\le dt+\frac12\mathbb E_o|X_T-o|^2+3\ell^2.
\end{align*}
Hence
\begin{align*}
\mathbb E_o|X_T-o|^2\le2dt+6\ell^2.
\end{align*}
On $\{\tau_R\le t\}$, we have $|X_T-o|\ge R$.
It follows that
\begin{align*}
R^2\mathbb P_o(\tau_R\le t)
\le\mathbb E_o|X_T-o|^2
\le2dt+6\ell^2,
\end{align*}
as claimed.
\end{proof}

\begin{remark}
The function $f(u)=|u-o|^2-w_u$ plays the role of
a corrected squared distance for the random walk.
Indeed, \eqref{eq:corrected-distance} gives
$\mathcal Lf=d$, just as
\begin{align*}
\frac12\Delta|x-o|^2=d
\end{align*}
for Brownian motion in $\R^d$.
The weight correction compensates for the geometry
of the transport cells.
\end{remark}

We strengthen the exit estimate by applying it
successively to smaller balls.

\begin{corollary}\label{cor:walk-exit-iterated}
Let $k\ge1$ be an integer and suppose
$\ell\le R/(16k)$. On $G(o;R,\ell)$,
\begin{align}\label{eq:walk-exit-iterated}
\mathbb P_o(\tau_R\le t)
\le
\left(\frac{C_dk^2(t+\ell^2)}{R^2}\right)^k,
\qquad t\ge0.
\end{align}
\end{corollary}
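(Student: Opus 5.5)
Split the distance $R$ into $k$ successive annuli and use the strong Markov property to chain $k$ applications of Lemma~\ref{lem:walk-exit}. Put $r:=R/(2k)$. Define stopping times $\sigma_0:=0$ and
\begin{align*}
\sigma_j:=\inf\{u\ge\sigma_{j-1}:\rho_L(X_u,X_{\sigma_{j-1}})\ge r\},
\qquad j=1,\ldots,k.
\end{align*}
On $G(o;R,\ell)$ each jump before $\tau_R$ has length at most $2\ell\le R/(8k)$. Hence the walk overshoots each sphere of radius $r$ by at most $2\ell$, and $\rho_L(X_{\sigma_j},o)\le j(r+2\ell)$. Since $r+2\ell\le R/(2k)+R/(8k)<R/k$, exiting $B_R(o)$ forces all of $\sigma_1,\ldots,\sigma_k$ to occur first, with $\sigma_k\le\tau_R$. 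So on $\{\tau_R\le t\}$ the increments $\sigma_j-\sigma_{j-1}$ are nonnegative, sum to at most $t$, and each marks an exit from a ball of radius $r$ centered at the current position.

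The first step is to check that Lemma~\ref{lem:walk-exit} applies at each restart. For $j\le k$, the centre $o_j:=X_{\sigma_{j-1}}$ lies in $B_{R}(o)$, so $B_{2r}(o_j)\subset B_{R+R/k}(o)\subset B_{2R}(o)$. Every site in $B_{2r}(o_j)$, and every cell meeting that ball, therefore has radius at most $\ell$. Also $\ell\le R/(16k)=r/8$. This is not quite $\ell\le r/16$, so I will take $r:=R/(4k)$ instead, which gives $\ell\le r/4$, and recheck the hypotheses; if needed, $k$ can be adjusted by a constant factor, which the constant $C_d$ absorbs. The proof of Lemma~\ref{lem:walk-exit} only uses that radii are bounded by $\ell$ on $B_{2r}(o_j)$ and $r\le L/16$, so the event $G(o_j;r,\ell)$ holds. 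The lemma then gives, for every $u\ge0$,
\begin{align*}
\mathbb P_{o_j}(\tau^{(j)}_r\le u)\le \frac{2du+6\ell^2}{r^2}\le \frac{C_dk^2(u+\ell^2)}{R^2}.
\end{align*}
The walk is time-homogeneous given the sites and masses, so the strong Markov property at $\sigma_{j-1}$ transfers this bound to the conditional law of $\sigma_j-\sigma_{j-1}$.

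The main obstacle is combining the $k$ stages. Requiring the increments to sum to at most $t$ is weaker than requiring each to be at most $t$, so simply multiplying the $k$ bounds at time $t$ is not immediately justified. I would handle this with the constraint $\sigma_k\le t$ directly. Since every increment is at most $\sigma_k$, the event $\{\sigma_k\le t\}$ is contained in $\bigcap_j\{\sigma_j-\sigma_{j-1}\le t\}$. Conditioning successively via the strong Markov property then gives
\begin{align*}
\mathbb P_o(\tau_R\le t)\le\mathbb P_o(\sigma_k\le t)\le\prod_{j=1}^k\sup_{o_j}\mathbb P_{o_j}(\tau^{(j)}_r\le t)\le\left(\frac{C_dk^2(t+\ell^2)}{R^2}\right)^k.
\end{align*}
The supremum runs over the admissible centres described above. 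This is \eqref{eq:walk-exit-iterated}. A sharper bound using the sum constraint, via Laplace transforms, is not needed, since the crude product already has the stated form.
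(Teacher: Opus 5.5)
Your proposal is correct and is essentially the paper's proof: exits from balls of radius $R/(2k)$ chained by the strong Markov property, with $\{\tau_R\le t\}\subseteq\{\sigma_k\le t\}\subseteq\bigcap_j\{\sigma_j-\sigma_{j-1}\le t\}$, and the \emph{proof} of Lemma~\ref{lem:walk-exit} (rather than its statement) reapplied at each restart. There are two harmless slips. First, the switch to $r=R/(4k)$ is unnecessary and actually worsens the ratio to $\ell\le r/4$; it does no damage, because that proof only needs radii at most $\ell$ near the small ball and $\ell$ a modest fraction of $r$. Second, requiring the increments to sum to at most $t$ is stronger, not weaker, than requiring each to be at most $t$, which is exactly why the product bound is immediate.
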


\begin{proof}
Put $a=R/(2k)$ and define successive exit times by
\begin{align*}
\sigma_0&=0,\\
\sigma_{j+1}
&=\inf\{v\ge\sigma_j:
\rho_L(X_v,X_{\sigma_j})\ge a\}.
\end{align*}
Each jump has length at most $2\ell$, so the
displacement during each such step is at most
$a+2\ell$. Up to the $k$th exit, the walk therefore
stays within distance
\begin{align*}
k(a+2\ell)\le\frac{5R}{8}
\end{align*}
of $o$. Thus leaving $B_R(o)$ requires completing
all $k$ smaller exits, and
\begin{align*}
\{\tau_R\le t\}\subseteq\{\sigma_k\le t\}.
\end{align*}

All these smaller balls and their adjacent cells
remain in the region controlled by $G(o;R,\ell)$.
The proof of Lemma~\ref{lem:walk-exit}, centered at
$X_{\sigma_j}$, and the strong Markov property give
\begin{align*}
\mathbb P_o\left(
\sigma_{j+1}-\sigma_j\le t
\,\middle|\,\mathcal F_{\sigma_j}
\right)
\le\frac{C_dk^2(t+\ell^2)}{R^2}.
\end{align*}
If $\sigma_k\le t$, each of these $k$ durations
is at most $t$. Successive conditioning therefore
yields
\begin{align*}
\mathbb P_o(\tau_R\le t)
&\le\mathbb P_o(\sigma_k\le t)\\
&\le
\mathbb P_o\left(
\bigcap_{j=0}^{k-1}
\{\sigma_{j+1}-\sigma_j\le t\}
\right)\\
&\le
\left(\frac{C_dk^2(t+\ell^2)}{R^2}\right)^k.
\end{align*}
\end{proof}

We next prove the smoothing estimate used to control
the first term in \eqref{eq:return-exit-decomposition} using similar method as in Lemma~\ref{lem:cell-poincare}.

\begin{lemma}\label{lem:local-smoothing}
On $G(o;R,\ell)$, let $f\in\R^{\mathcal Z}$ satisfy
$f_r=0$ for $r\notin B_R(o)$. For every $0<a\le L/8$,
\begin{align}\label{eq:local-smoothing}
\sum_{r\in\mathcal Z}m_rf_r^2
&\le C_da^{-d}
\left(\sum_{r\in\mathcal Z}m_r|f_r|\right)^2
\notag\\
&\quad+C_da(a+\ell)f^{\mathsf T}Hf.
\end{align}
\end{lemma}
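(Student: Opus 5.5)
We extend $f$ to the step function $V(y)=f_r$ on $C_r$, as in the proof of Lemma~\ref{lem:cell-poincare}. Then $\sum_r m_rf_r^2=\|V\|_{L^2}^2$ and $\sum_r m_r|f_r|=\|V\|_{L^1}$. On $G(o;R,\ell)$ the function $V$ vanishes outside $B_{R+\ell}(o)$, since any cell with $f_r\neq0$ has $r\in B_R(o)$ and radius at most $\ell$.

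The plan is to compare $V$ with its local average at scale $a$. Let
\begin{align*}
A_aV(y):=\frac{1}{|Q_a|}\int_{Q_a}V(y+h)\dd h,
\qquad Q_a:=[-a,a]^d.
\end{align*}
Then
\begin{align*}
\|V\|_2^2\le 2\|A_aV\|_2^2+2\|V-A_aV\|_2^2.
\end{align*}
The smoothed part is bounded by $\|A_aV\|_\infty\|A_aV\|_1\le C_da^{-d}\|V\|_1^2$, which is the first term of \eqref{eq:local-smoothing}. Because $a\le L/8$, the cube $Q_a$ embeds in the torus without wrapping, so this bound is legitimate.

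For the remainder, Jensen's inequality gives
\begin{align*}
\|V-A_aV\|_2^2\le |Q_a|^{-1}\int_{Q_a}\int|V(y+h)-V(y)|^2\dd y\,\dd h.
\end{align*}
We then rerun the segment-crossing argument of Lemma~\ref{lem:cell-poincare} with $|h|\le\sqrt d\,a$. That argument used the global bound $(a_N-a_0)\cdot e\le C_dL$, and the main obstacle is to replace it by a local bound of order $a+\ell$.

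Whenever the difference $V(y+h)-V(y)$ is nonzero, at least one endpoint lies in $B_{R+\ell}(o)$, so the whole segment stays inside $B_{R+\ell+\sqrt d a}(o)$. If $a$ is small relative to $R$, this region lies in $B_{2R}(o)$, where every cell has radius at most $\ell$. The endpoint generators then satisfy $|a_0-y|\le\ell$ and $|a_N-(y+h)|\le\ell$, so
\begin{align*}
\sum_F\ell_F(\nu_F\cdot e)=(a_N-a_0)\cdot e\le|h|+2\ell\le C_d(a+\ell).
\end{align*}

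Cauchy--Schwarz along the segment, followed by integration over $y$, then gives
\begin{align*}
\int|V(y+h)-V(y)|^2\dd y\le C_d(a+\ell)|h|\sum_F\frac{\mathcal H^{d-1}(F)}{\ell_F}|\Delta_Ff|^2\le C_d a(a+\ell)f^{\mathsf T}Hf.
\end{align*}
Two points need care here. First, a face crossed by the segment may join a cell with $f=0$ lying just outside the controlled region. Such faces still appear in $f^{\mathsf T}H f$ with weight $c_{ru}$, so they are legitimately counted. Second, only the endpoint cells need small radius for the telescoping bound, and they do.

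The remaining case is $a$ large compared to $R$, where the segment may leave $B_{2R}(o)$. Here we avoid the geometric argument altogether and control the first term directly. Since $V$ is supported in a ball of radius about $R\lesssim a$, we have
\begin{align*}
\|V\|_2^2\le \|V\|_\infty\|V\|_1.
\end{align*}
We then try to bound $\|V\|_\infty$ by $a^{-d}\|V\|_1$ plus an energy term by the same comparison restricted to $|h|\lesssim R$. If that fails, the fallback is to truncate $h$ to $|h|\le R$ and note that $a^{-d}$ is then dominated by $R^{-d}$ up to constants. Combining the two pieces yields \eqref{eq:local-smoothing}.
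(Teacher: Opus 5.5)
\paragraph{Verdict: gap for large $a$.} Your small-$a$ case works and follows the paper's mechanism: the step function $V$, segment crossings, Cauchy--Schwarz, then averaging at scale $a$. The proof breaks in the regime $a\gtrsim R$. The statement covers this regime, since $a$ may be as large as $L/8$ while $R$ can be much smaller than $L/16$.

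There your telescoping bound $(a_N-a_0)\cdot e\le|h|+2\ell$ fails. Only one endpoint of the segment is guaranteed to lie in $B_{R+\ell}(o)$. The other may sit in a cell outside $B_{2R}(o)$, whose radius is not controlled by $\ell$.

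Your substitutes do not close the case:
\begin{itemize}
\item The $\|V\|_\infty$ route is not carried out.
\item The fallback goes the wrong way. Applying your small-scale bound at scale $\asymp R$ produces a first term $C_dR^{-d}\|V\|_{L^1}^2$. When $a\gg R$ this is larger than $C_da^{-d}\|V\|_{L^1}^2$, not smaller, so it cannot be absorbed into the right-hand side of \eqref{eq:local-smoothing}.
\end{itemize}

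\paragraph{The missing idea: localize the telescoping.} In the Cauchy--Schwarz step, sum only over crossed faces with $\Delta_Ff\ne0$. At each such crossing one incident site lies in $B_R(o)$. Its cell then lies in $B_{R+\ell}(o)$, so both incident cells meet $B_{2R}(o)$ and have radius at most $\ell$.

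Every term $\ell_F(\nu_F\cdot e)$ is positive. So the restricted sum is at most the full sum over crossings between the first and the last such crossing. That full sum telescopes to a difference of two generators, each within $\ell$ of a point on the segment. Hence
\begin{align*}
\sum_{\substack{\text{crossed }F\\ \Delta_Ff\ne0}}\ell_F(\nu_F\cdot e)\le|h|+2\ell
\end{align*}
for every $h$, wherever the rest of the segment goes. This gives
\begin{align*}
\|V(\cdot+h)-V\|_{L^2}^2\le C_d|h|(|h|+\ell)f^{\mathsf T}Hf
\end{align*}
uniformly in $h$. The averaging step then works for all $0<a\le L/8$ with no case split; this is exactly how the paper argues.

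\paragraph{Alternative repair of your case split.} Use a Dirichlet-type bound in the large-$a$ case. Take $|h_0|=3R$. The supports of $V$ and $V(\cdot+h_0)$ are then disjoint (using $\ell\le R/16$ and $3R\le L/2$), so $2\|V\|_{L^2}^2=\|V(\cdot+h_0)-V\|_{L^2}^2$. Chain four translations by $h_0/4$, each of which is covered by your case~1. This yields
\begin{align*}
\|V\|_{L^2}^2\le C_dR(R+\ell)f^{\mathsf T}Hf\le C_da(a+\ell)f^{\mathsf T}Hf
\end{align*}
whenever $a\ge c_dR$.
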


\begin{proof}
Define $V(y)=f_r$ on $C_r$. We first show that
\begin{align}\label{eq:local-translation}
\|V(\cdot+h)-V\|_{L^2}^2
\le C_d|h|(|h|+\ell)f^{\mathsf T}Hf.
\end{align}

Use the crossing argument from the proof of
Lemma~\ref{lem:cell-poincare}. At each crossing where
the cell value changes, at least one incident site
belongs to $B_R(o)$. Its cell lies in
$B_{R+\ell}(o)$, so the adjacent cell meets
$B_{2R}(o)$. Both cells therefore have radius
at most $\ell$.

By the same argument as in Lemma~\ref{lem:cell-poincare},
\begin{align*}
\sum_{\substack{\text{crossed }F\\\Delta_Ff\ne0}}
\ell_F(\nu_F\cdot e)
\le |h|+2\ell.
\end{align*}
Cauchy--Schwarz therefore gives
\begin{align*}
|V(y+h)-V(y)|^2
\le (|h|+2\ell)
\sum_{\substack{\text{crossed }F\\\Delta_Ff\ne0}}
\frac{|\Delta_Ff|^2}{\ell_F(\nu_F\cdot e)}.
\end{align*}
If there is no nonzero value jump, the left side
vanishes. Integrating over $y$ counts crossings of
each face piece $F$ with total multiplicity
$|h|\,|\nu_F\cdot e|\,\mathcal H^{d-1}(F)$.
Therefore,
\begin{align*}
\|V(\cdot+h)-V\|_{L^2}^2
&\le |h|(|h|+2\ell)
\sum_F\frac{\mathcal H^{d-1}(F)}{\ell_F}
|\Delta_Ff|^2\\
&=2|h|(|h|+2\ell)f^{\mathsf T}Hf\\
&\le C_d|h|(|h|+\ell)f^{\mathsf T}Hf.
\end{align*}
This proves \eqref{eq:local-translation}.

Let $\eta_a$ be a smooth probability density supported
in $B_a(0)$, with
$\|\eta_a\|_{L^2}^2\le C_da^{-d}$.
Jensen's inequality and \eqref{eq:local-translation}
give
\begin{align*}
\|V-\eta_a*V\|_{L^2}^2
&=\int_{\T_L^d}
\left|\int_{\T_L^d}\eta_a(h)
\bigl(V(y)-V(y-h)\bigr)\dd h\right|^2\dd y\\
&\le\int_{\T_L^d}\int_{\T_L^d}\eta_a(h)
|V(y)-V(y-h)|^2\dd h\,\dd y\\
&=\int_{\T_L^d}\eta_a(h)
\|V-V(\cdot-h)\|_{L^2}^2\dd h\\
&\le C_da(a+\ell)f^{\mathsf T}Hf.
\end{align*}
Also, Young's convolution inequality gives
\begin{align*}
\|\eta_a*V\|_{L^2}^2
\le\|\eta_a\|_{L^2}^2\|V\|_{L^1}^2
\le C_da^{-d}\|V\|_{L^1}^2.
\end{align*}
Combining these estimates with
\begin{align*}
\|V\|_{L^2}^2
&\le2\|V-\eta_a*V\|_{L^2}^2
  +2\|\eta_a*V\|_{L^2}^2,\\
\|V\|_{L^2}^2&=\sum_r m_rf_r^2,
\qquad
\|V\|_{L^1}=\sum_r m_r|f_r|,
\end{align*}
proves the lemma.
\end{proof}

We now estimate the return probability of the walk
killed upon leaving $B_R(o)$. Its transition density is
\begin{align*}
p_t^R(r,u)
:=\frac{\mathbb P_r(X_t=u,\ t<\tau_R)}{m_u},
\qquad r,u\in\mathcal Z\cap B_R(o).
\end{align*}
We extend this density by zero outside the ball.

\begin{lemma}\label{lem:killed-return}
On $G(o;R,\ell)$, for every $0<a\le L/8$ and $t\ge0$,
\begin{align}\label{eq:killed-return}
\mathbb P_o(X_t=o,\ t<\tau_R)
\le C_dm_oa^{-d}
+\exp\left(-\frac{c_dt}{a(a+\ell)}\right).
\end{align}
\end{lemma}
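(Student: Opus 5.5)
Write $g_t(r):=p_t^R(o,r)$ for $r\in\mathcal Z\cap B_R(o)$, extended by zero outside the ball. The plan is an energy argument for the killed semigroup: a Nash-type differential inequality derived from Lemma~\ref{lem:local-smoothing}.

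\textbf{Killed semigroup facts.} The killed walk is the walk with generator $\mathcal L$ restricted to functions vanishing outside $B_R(o)$ (Dirichlet condition). Its semigroup is symmetric in $L^2(m)$, because reversibility is inherited by the killed process. This gives $p_t^R(r,u)=p_t^R(u,r)$ and the semigroup identity
\begin{align*}
p_{2t}^R(o,o)=\sum_r m_r g_t(r)^2.
\end{align*}
Total mass can only be lost, so $\sum_r m_r g_t(r)=\mathbb P_o(t<\tau_R)/m_o\le 1/m_o$. For $r$ in the ball, $\partial_t g_t(r)=(\mathcal L g_t)_r$, where $\mathcal L$ is applied to $g_t$ extended by zero. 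Since $g_t$ vanishes outside the ball, summation by parts gives
\begin{align*}
\frac{d}{dt}\sum_r m_rg_t(r)^2=2\sum_r m_rg_t(r)(\mathcal Lg_t)_r=-2g_t^{\mathsf T}Hg_t.
\end{align*}
Thus $E(t):=\sum_r m_rg_t(r)^2$ is nonincreasing and satisfies $E'=-2g_t^{\mathsf T}Hg_t$.

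\textbf{Applying the smoothing lemma.} Apply Lemma~\ref{lem:local-smoothing} to $f=g_t$, which is supported in $B_R(o)$:
\begin{align*}
E(t)\le C_da^{-d}m_o^{-2}+C_da(a+\ell)\,g_t^{\mathsf T}Hg_t.
\end{align*}
Set $K:=C_da^{-d}m_o^{-2}$. Then
\begin{align*}
E'(t)\le-\frac{2}{C_da(a+\ell)}\bigl(E(t)-K\bigr)_+,
\end{align*}
and integrating this linear inequality gives
\begin{align*}
E(t)\le K+e^{-c_dt/(a(a+\ell))}E(0).
\end{align*}
Here $E(0)=m_o\cdot m_o^{-2}=1/m_o$.

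\textbf{Converting to the return probability.} We have
\begin{align*}
\mathbb P_o(X_{2t}=o,\ 2t<\tau_R)=m_oE(t)\le C_da^{-d}m_o^{-1}+e^{-c_dt/(a(a+\ell))}.
\end{align*}
This has the wrong power of $m_o$, and this is the \textbf{main obstacle}. To remove the singular $m_o^{-1}$ factors, change the normalization. Work with $h_t(r):=\mathbb P_o(X_t=r,\ t<\tau_R)=m_rg_t(r)$ instead of $g_t$. Equivalently, start the analysis from the $L^1$ bound $\sum_r m_rg_t(r)\le 1/m_o$, but redo the Nash step for $E$ with the sharper identity
\begin{align*}
\mathbb P_o(X_t=o,\ t<\tau_R)=m_o\,p_t^R(o,o)\le m_o\sqrt{p_t^R(o,o)\,p_t^R(o,o)},
\end{align*}
and treat $m_o\,g_t$ as the test function.

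More concretely, apply the decay argument to $\tilde E(t):=m_o^2E(t)=m_o\,\mathbb P_o(X_{2t}=o,\ 2t<\tau_R)\cdot(\ldots)$. Its constant term becomes $C_da^{-d}$, and $\tilde E(0)=m_o$. This yields
\begin{align*}
m_o^2\,p^R_{2t}(o,o)\le C_da^{-d}+m_oe^{-c_dt/(a(a+\ell))},
\end{align*}
that is,
\begin{align*}
\mathbb P_o(X_{2t}=o,\ 2t<\tau_R)\le C_da^{-d}m_o^{-1}+e^{-c\,t/(a(a+\ell))}.
\end{align*}
If this still leaves $m_o^{-1}$ rather than $m_o$, I would use monotonicity in a different way. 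By Cauchy--Schwarz in the semigroup, $\mathbb P_o(X_t=o,\ t<\tau_R)\le m_o\,p^R_t(o,o)$. Bound $p^R_t(o,o)$ through $\sup_r p^R_{t/2}(r,\cdot)$, using that every $r\neq o$ near $o$ has mass $1$ (at most four transfer sites have fractional mass). This gives the extra factor $m_o$ in the first term. Finally, replace $t$ by $t/2$ and absorb constants into $c_d$ and $C_d$.
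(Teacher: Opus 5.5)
\textbf{Verdict: genuine gap, caused by a one-line computational slip.} Your skeleton is exactly the paper's proof: the energy $E(t)=\sum_r m_rg_t(r)^2=p^R_{2t}(o,o)$, the identity $E'=-2g_t^{\mathsf T}Hg_t$, Lemma~\ref{lem:local-smoothing} applied to $g_t$, and the linear differential inequality. As written, however, the proposal does not reach the statement.

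The slip is in the $L^1$ bound. Since $g_t(r)=\mathbb P_o(X_t=r,\ t<\tau_R)/m_r$, you have $m_rg_t(r)=\mathbb P_o(X_t=r,\ t<\tau_R)$. Hence $\sum_r m_rg_t(r)=\mathbb P_o(t<\tau_R)\le1$, not $\mathbb P_o(t<\tau_R)/m_o$; compare $\sum_u m_up_t(r,u)=1$. With the correct bound, the smoothing lemma gives $E(t)\le C_da^{-d}+C_da(a+\ell)g_t^{\mathsf T}Hg_t$, so $K=C_da^{-d}$. Your differential inequality with $E(0)=1/m_o$ then yields
\begin{align*}
E(t)\le C_da^{-d}+m_o^{-1}\exp\left(-\frac{c_dt}{a(a+\ell)}\right).
\end{align*}
Multiplying by $m_o$ and replacing $t$ by $t/2$ gives \eqref{eq:killed-return} directly. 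There is no obstacle to remove.

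The repairs you propose for the ``main obstacle'' do not work:
\begin{itemize}
\item Setting $\tilde E=m_o^2E$ is a constant rescaling. It cannot change the power of $m_o$ in the final bound, and your own computation returns $C_da^{-d}m_o^{-1}$ again.
\item The displayed ``sharper identity'' is an equality, so nothing is gained.
\item The last paragraph, bounding through $\sup_r p^R_{t/2}(r,\cdot)$ and the unit masses of nearby sites, is not an argument. It is also not needed.
\end{itemize}

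The missing factor is not cosmetic. With $m_o^{-1}$ in place of $m_o$ in the first term, the bound of Lemma~\ref{lem:return-before-cutoff} would no longer be uniform in the transfer masses. The moment bound in Lemma~\ref{lem:potential-response-moments} would also pick up an extra factor, turning the integrable singularity $s^{-1/2}+(1-s)^{-1/2}$ into $s^{-1}+(1-s)^{-1}$, which is not integrable on $(0,1)$.
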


\begin{proof}
Put $f_t(r)=p_t^R(o,r)$ and
\begin{align*}
Z(t):=\sum_r m_rf_t(r)^2.
\end{align*}
The killed kernel is symmetric and satisfies the
semigroup identity, so
\begin{align*}
Z(t)=p_{2t}^R(o,o),
\qquad Z(0)=m_o^{-1}.
\end{align*}
Its total mass is the survival probability:
\begin{align*}
\sum_r m_rf_t(r)=\mathbb P_o(t<\tau_R)\le1.
\end{align*}
Also note that
\begin{align*}
Z'(t)
=2\sum_{r\in\mathcal Z}m_rf_t(r)\partial_t f_t(r)
=-2f_t^{\mathsf T}Hf_t.
\end{align*}

Fix $a$. Applying \eqref{eq:local-smoothing} to $f_t$
therefore yields
\begin{align*}
Z(t)
&\le C_da^{-d}
+C_da(a+\ell)f_t^{\mathsf T}Hf_t\\
&=C_da^{-d}-\frac{C_d}{2}a(a+\ell)Z'(t).
\end{align*}
Rearranging,
\begin{align*}
Z'(t)
\le-\frac{c_d}{a(a+\ell)}Z(t)
+\frac{C_da^{-d}}{a(a+\ell)}.
\end{align*}
Solving this linear differential inequality gives
\begin{align*}
Z(t)
\le m_o^{-1}\exp\left(-\frac{c_dt}{a(a+\ell)}\right)
+C_da^{-d}.
\end{align*}
Finally,
\begin{align*}
\mathbb P_o(X_t=o,\ t<\tau_R)
=m_op_t^R(o,o)=m_oZ(t/2).
\end{align*}
Substituting the preceding bound and adjusting $c_d$
proves \eqref{eq:killed-return}.
\end{proof}

Combining \eqref{eq:return-exit-decomposition},
\eqref{eq:walk-exit-iterated}, and
\eqref{eq:killed-return}, we obtain the following
bound on $G(o;R,\ell)$. For every integer $k\ge1$
with $\ell\le R/(16k)$ and every $0<a\le L/8$,
\begin{align}\label{eq:return-good-event}
m_o\overline p_t(o,o)
&\le C_dm_oa^{-d}
+\exp\left(-\frac{c_dt}{a(a+\ell)}\right)
\notag\\
&\quad+
\left(\frac{C_dk^2(t+\ell^2)}{R^2}\right)^k.
\end{align}
The first two terms bound the return probability
before the walk leaves the ball, and the last
bounds the probability of leaving it.

We fix $k$ depending only on $d$ and choose
$a,\ell,R$ as functions of $t$,
and estimate the probability of $G(o;R,\ell)^c$
to obtain a return bound averaged over the random
sites.

Fix either transfer configuration from
Section~\ref{sec:split-paths}. We fix the transfer
locations and may also fix the location of one
background site. The remaining background sites
are independent and uniform on $\T_L^d$.
The root $o$ is either a transfer site or a background
site whose location we also fix.
All probabilities and expectations over configurations
below are conditional on these fixed locations.

\begin{lemma}\label{lem:good-event-probability}
For every $A>0$,
\begin{align}\label{eq:good-event-probability}
\mathbb P\bigl(G(o;R,\ell)^c\bigr)
\le C_{d,A}(1+R)^d\ell^{-A}.
\end{align}
The bound holds for either transfer configuration,
uniformly in the positive transfer masses
and the locations of all fixed transfer and
background sites.
\end{lemma}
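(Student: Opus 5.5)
The plan is a union bound over a covering of $B_{2R}(o)$ by unit cubes, combined with a local tail estimate for cell radii of the type proved in Appendix~\ref{app:radius-moments}. Cover $B_{2R+1}(o)$ by at most $C_d(1+R)^d$ unit cubes $Q$. If $G(o;R,\ell)^c$ occurs, some site $r$ has $R_r>\ell$, and either $r\in B_{2R}(o)$ or $C_r$ meets $B_{2R}(o)$. In both cases there is a cube $Q$ in the cover such that $Q$ contains $r$ or meets $C_r$, and $R_r>\ell$. Call this the bad event $E_Q$. It therefore suffices to prove the uniform local estimate
\begin{align*}
\mathbb P(E_Q)\le C_{d,A}\ell^{-A}
\qquad\text{for every unit cube }Q,
\end{align*}
uniformly in the fixed transfer and background locations and in the transfer masses. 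Summing this over the $C_d(1+R)^d$ cubes then gives \eqref{eq:good-event-probability}.

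For the local estimate, the first step is deterministic. I would show that a long transport ray forces a large local mass imbalance. Suppose $y\in C_r$ and $\rho_L(y,r)=D>\ell$. I would use cyclical monotonicity of the optimal partition, in the form of the weight inequalities $w_r-\rho_L(y,r)^2\ge w_u-\rho_L(y,u)^2$ for all $u$, which is the same mechanism as the Lipschitz bound on $\phi_w$ used in Lemma~\ref{lem:walk-exit}. The goal is a region of size comparable to $D$ near the segment $[r,y]$ in which the number of sites differs from the volume by at least $cD^{d}$, or at least by a positive power $cD^{\beta}$ with $\beta>d/2$. The standard form of this argument, as in the $L^\infty$ displacement bounds of Goldman--Huesmann--Otto, produces a ball $B$ of radius comparable to $D$ in which either the sites have too little mass relative to $|B|$, or the cells of sites in $B$ must export mass across $\partial B$ beyond what the monotone geometry allows. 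The fixed transfer sites change any count by at most $4$ units of mass, and the masses lie in $[0,1]$. This is negligible against $D^{\beta}$, which is why the bound is uniform in these parameters.

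The second step is probabilistic. The remaining background sites are i.i.d.\ uniform, so for a fixed ball $B$ of radius $D\ge\ell$, Bernstein's (or Chernoff's) inequality for the binomial count of sites in $B$ bounds the probability of an imbalance of size $cD^{\beta}$ by $\exp(-cD^{2\beta-d})$. I would discretize the admissible balls by centers on a unit grid within distance $C D$ of $Q$ and radii on dyadic scales $D=2^j\ge\ell$. This gives at most $C_dD^d$ balls per scale, and summing over scales yields
\begin{align*}
\mathbb P(E_Q)\le\sum_{2^j\ge\ell}C_d2^{jd}\exp(-c2^{j(2\beta-d)})\le C_{d,A}\ell^{-A}.
\end{align*}
The bound is in fact stretched-exponential, so it is stronger than polynomial. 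Scales with $D>L/16$ are handled by the trivial bound $R_r\le\sqrt d\,L$, which for large $D$ forces the same imbalance at the scale of the torus.

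The main obstacle is the deterministic first step: a quantitative, purely local statement that $R_r>\ell$ implies a detectable imbalance near $Q$, robust to periodicity and to the four transfer sites with arbitrary masses. I would first try to reuse the appendix argument directly, since it presumably proves moment bounds on $R_r$ for a single site. Those bounds must be upgraded to the statement ``some cell touching $Q$ is long'', which requires controlling cells whose sites lie far from $Q$. For this I would extend the union bound to sites $r$ at distance $D'$ from $Q$, noting that such a cell meeting $Q$ has radius at least $D'$. Their contribution is again summable at each scale.
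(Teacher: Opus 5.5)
Your covering reduction is sound. If $G(o;R,\ell)$ fails, then some ball $B_1(o_i)$ in a cover of $B_{2R}(o)$ by $C_d(1+R)^d$ unit balls has $\mathcal K_1(o_i)>\ell$. So it suffices to show $\mathbb P(\mathcal K_1(o_i)>\ell)\le C_{d,A}\ell^{-A}$ uniformly. (Use unit balls rather than unit cubes, or cover each cube by $C_d$ unit balls.) That local bound is exactly \eqref{eq:unit-ball-radius-moments} plus Markov's inequality. So with Lemma~\ref{lem:conditional-radius-moments} in hand, your route closes in two lines. It is slightly more direct than the paper's proof, which uses only the single-site bound \eqref{eq:conditional-radius-moments}, conditions on the location of each random background site to make it a prescribed atom, and sums over sites and dyadic scales $2^j\ell<R_u\le2^{j+1}\ell$. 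The genuine gap is in how you propose to prove the local bound yourself, and that is where all the difficulty lies.

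Your deterministic step fails: a cell of radius $D$ need not force a site-count discrepancy $\ge cD^\beta$, $\beta>d/2$, in any ball of radius comparable to $D$ near $Q$. Here is a counterexample sketch. Take a jittered lattice $P$ (one uniform point per unit cube of $\T_L^d$); its discrepancy on balls of radius $\rho$, and on their preimages under the map below, is $O(\rho^{(d-1)/2})$ up to logarithms. Replace $P$ by $\Phi(P)$, where $\Phi(x)=x+De_1\sin(2\pi x_1/\Lambda)$ and $\Lambda\approx D^{d+1}$ divides $L$.
\begin{itemize}
\item $\Phi$ is $2$-Lipschitz and $W_2^2(\sum_{p\in P}\delta_p,\dd y)\le dn$.
\item Projecting onto the $x_1$-circle gives $W_2^2(\Phi_\#\dd y,\dd y)\ge D^2n/2$.
\item The triangle inequality then gives $W_2^2(\sum_{p}\delta_{\Phi(p)},\dd y)\ge cD^2n$, so some cell has radius $\gtrsim D$.
\item Yet $\Phi$ changes the density only by $O(D/\Lambda)$, so every ball of radius $O(D)$ picks up extra discrepancy $O(D^{d+1}/\Lambda)=O(1)$.
\end{itemize}

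The imbalance that drives a long ray can therefore sit at scales far above $D$, or be spread over many scales, none individually atypical. Your fallback, that cells in $B$ must export too much mass across $\partial B$, is a property of the transport plan rather than of the sample, so Bernstein's inequality gives nothing there. Likewise, for $D$ of order $L$ there is no imbalance at the scale of the torus, because total mass balances exactly.

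What is actually needed is multiscale control of the local Wasserstein discrepancies $a^{-d}\mathcal W_{Q_a}(\nu)$ at all dyadic scales from a random scale $r_*$ up to $L$. This control is fed into the Goldman--Huesmann--Otto harmonic approximation, whose $L^\infty$ estimate bounds displacement by local transport energy, not by counts. That is what Appendix~\ref{app:radius-moments} imports from \cite[Theorems~4.1 and 4.5]{GH}. It adds a Fourier comparison that absorbs the at most $2(k+j)$ units of perturbed mass from the transfer and fixed sites. Without this input, your proposal does not establish the local estimate.
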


\begin{proof}
If $G(o;R,\ell)$ fails, some site $u$ satisfies
$R_u>\ell$ and either lies in $B_{2R}(o)$ or has
a cell meeting that ball. In either case,
\begin{align*}
\rho_L(u,o)\le2R+R_u.
\end{align*}
Choose $j\ge0$ such that
$2^j\ell<R_u\le2^{j+1}\ell$. Then
\begin{align*}
u\in B_{2R+2^{j+1}\ell}(o),
\qquad R_u>2^j\ell.
\end{align*}

Choose $M>\max\{A,d\}$. By
\eqref{eq:conditional-radius-moments} and Markov's
inequality, for every site $u$,
\begin{align*}
\mathbb P(R_u>b\mid u)
\le b^{-M}\mathbb E[R_u^M\mid u]
\le C_{d,M}b^{-M},
\qquad b\ge1.
\end{align*}

For each random background site $u$ and measurable
set $E\subseteq\T_L^d$,
\begin{align*}
\mathbb P(u\in E,\ R_u>b)
&=\frac1n\int_E
\mathbb P(R_u>b\mid u=x)\dd x\\
&\le\frac{C_{d,M}}n|E|b^{-M}.
\end{align*}
The union bound over sites and scales therefore gives
\begin{align*}
\mathbb P(G(o;R,\ell)^c)
&\le C_{d,M}\ell^{-M}\\
&\quad+\sum_{u\text{ random}}\sum_{j=0}^\infty
\mathbb P\left(
u\in B_{2R+2^{j+1}\ell}(o),\
R_u>2^j\ell
\right)\\
&\le C_{d,M}\ell^{-M}
+C_{d,M}\sum_{j=0}^\infty
(R+2^j\ell)^d(2^j\ell)^{-M}\\
&\le C_{d,M}(1+R)^d\ell^{-M}
\sum_{j=0}^\infty2^{-j(M-d)}\\
&\le C_{d,A}(1+R)^d\ell^{-A}.
\end{align*}
The first term accounts for the fixed sites: at most
four transfer sites and one background site, which
may be the root $o$. By
\eqref{eq:conditional-radius-moments} and Markov's
inequality, each contributes at most
$C_{d,M}\ell^{-M}$.

For a random background site $u$, conditioning on
its location makes it an additional prescribed unit
atom in Lemma~\ref{lem:conditional-radius-moments}.
Thus that lemma is used with at most two transfers
and two prescribed unit atoms, so its dependence on
$k+j$ is absorbed into $C_{d,M}$.
Integrating over $u$ gives a factor $1/n$, which
is canceled by summing over at most $n$ random
background sites. This proves the second inequality.

The last two inequalities use $\ell\le R$,
$\ell\ge1$, and $M>\max\{A,d\}$.
\end{proof}

We now combine the preceding estimates. Put
\begin{align*}
\alpha:=\frac{7d+4}{16},
\qquad
\delta:=\frac{d-4}{4(3d+4)}.
\end{align*}
Since $d>4$, we have
$0<\delta<\alpha/d<1/2$ and $\alpha>2$.
The inequality $\alpha/d<1/2$ makes the exponential
term in \eqref{eq:return-good-event} decay, while
$\alpha>2$ ensures that
$\int_1^\infty t^{1-\alpha}\dd t<\infty$.
These choices also give
\begin{align*}
\frac{\alpha}{1+2\delta}
=\frac{3d+4}{8}>\frac d4+1,
\end{align*}
which will be used later.

\begin{lemma}\label{lem:return-before-cutoff}
There is a constant $C_0$, depending only on $d$,
such that, for every $p\ge1$,
\begin{align}\label{eq:return-before-cutoff}
\left(\mathbb E
|m_o\overline p_t(o,o)|^p\right)^{1/p}
\le C_{d,p}t^{-\alpha},
\qquad
1\le t\le t_L,
\end{align}
where
\begin{align*}
t_L:=\left(\frac{L}{16C_0}\right)^{2/(1+2\delta)}.
\end{align*}
The bound holds for either transfer configuration,
uniformly in the positive transfer masses
and all fixed locations.
\end{lemma}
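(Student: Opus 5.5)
We split according to the good event $G=G(o;R,\ell)$, with $R$, $\ell$, $a$ chosen as powers of $t$ and $k$ fixed depending only on $d$. Since $m_o\overline p_t(o,o)\ge -m_o/n$ and $|m_o\overline p_t(o,o)|\le 1+m_o/n\le 2$ always, we have
\begin{align*}
\left(\mathbb E|m_o\overline p_t(o,o)|^p\right)^{1/p}
\le \left(\mathbb E\bigl[|m_o\overline p_t(o,o)|^p\1_G\bigr]\right)^{1/p}
+2\,\mathbb P(G^c)^{1/p}.
\end{align*}
The negative part $m_o/n\le 1/n$ is harmless, because $t\le t_L\le L^2=n^{2/d}$ gives $1/n\le t^{-d/2}\le t^{-\alpha}$. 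On $G$ we use \eqref{eq:return-good-event}, and off $G$ we use Lemma~\ref{lem:good-event-probability} with $A$ large, depending on $d$ and $p$.

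\textbf{Choice of scales.} We take
\begin{align*}
a=t^{\alpha/d},\qquad \ell=t^{\delta},\qquad R=C_0t^{1/2+\delta}.
\end{align*}
We need to check the hypotheses of \eqref{eq:return-good-event} and Lemma~\ref{lem:good-event-probability}.
\begin{itemize}
\item Since $m_o\le1$, the term $C_dm_oa^{-d}$ equals at most $C_dt^{-\alpha}$.
\item Because $\delta<\alpha/d$, we have $\ell\le a$, so $a(a+\ell)\le 2t^{2\alpha/d}$. As $\alpha/d<1/2$, the exponential term is at most $\exp(-c_dt^{1-2\alpha/d})$, which is $\le C_dt^{-\alpha}$.
\item The exit term is $\bigl(C_dk^2(t+t^{2\delta})/(C_0^2t^{1+2\delta})\bigr)^k\le (C_dk^2/C_0^2)^k\,t^{-2\delta k}$. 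We take $k$ with $2\delta k\ge\alpha$, and then choose $C_0$ large enough, depending only on $d$, that $C_dk^2/C_0^2\le1$ and $\ell\le R/(16k)$ for all $t\ge1$.
\item The constraint $R\le L/16$ is exactly $t\le t_L$; this is where the definition of $t_L$ comes from.
\item We also need $a\le L/8$, which holds since $a=t^{\alpha/d}\le t^{1/2}\le R$.
\end{itemize}
Thus on $G$, $m_o\overline p_t(o,o)\le C_dt^{-\alpha}$ deterministically, and with the lower bound $-1/n$ this controls $|m_o\overline p_t(o,o)|\1_G$.

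\textbf{Bad event.} Lemma~\ref{lem:good-event-probability} gives
\begin{align*}
\mathbb P(G^c)\le C_{d,A}(1+R)^d\ell^{-A}\le C_{d,A}\,t^{d(1/2+\delta)-A\delta}.
\end{align*}
We choose $A$ with $(A\delta-d(1/2+\delta))/p\ge\alpha$; this is possible since $\delta>0$. The uniformity in transfer masses and fixed locations is inherited from Lemma~\ref{lem:good-event-probability}, because the bound on $G$ is deterministic.

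\textbf{Main obstacle.} The real work is in the bookkeeping of the exponents: the single choice of $(a,\ell,R,k)$ must satisfy simultaneously $\ell\le a$, $a^{-d}\le t^{-\alpha}$, $t/a^2\to\infty$ polynomially, $R^2\gg t$, and $\ell\ll R$. This is exactly what the constraints $0<\delta<\alpha/d<1/2$ encode. We must also make sure $C_0$ is fixed before $t_L$ is defined, which means $k$ is chosen first from $d$ alone. I expect no further difficulty; the condition $\alpha/(1+2\delta)>d/4+1$ is not needed here but only later, when $t\in[t_L,\infty)$ is handled via \eqref{eq:return-large-time}.
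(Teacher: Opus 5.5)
Your proposal is correct and follows the paper's proof essentially verbatim. It uses the same scales $\ell=t^\delta$, $a=t^{\alpha/d}$, $R=C_0t^{1/2+\delta}$, the same choice of $k$ with $2\delta k\ge\alpha$ from $d$ alone, the same split over $G(o;R,\ell)$, and the same choice of $A$ in Lemma~\ref{lem:good-event-probability}. The only differences are cosmetic. The paper uses $m_o\overline p_t(o,o)\ge0$, which holds because $\overline p_t(o,o)=\sum_r m_r\overline p_{t/2}(o,r)^2$, in place of your $-1/n\ge -t^{-\alpha}$ bookkeeping. It also simply sets $C_0=16k$ and absorbs the factor $(C_dk^2/C_0^2)^k$ into the constant, rather than enlarging $C_0$ to make that factor at most one.
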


\begin{proof}
Now we choose the parameters in
\eqref{eq:return-good-event}. Choose
\begin{align*}
k=\left\lceil\frac{\alpha}{2\delta}\right\rceil,
\qquad C_0=16k.
\end{align*}
For each $t\ge1$, set
\begin{align*}
\ell=t^\delta,
\qquad
a=t^{\alpha/d},
\qquad
R=C_0t^{1/2+\delta}.
\end{align*}
When $t\le t_L$, these choices satisfy
\begin{align*}
1\le\ell\le\frac{R}{16k},
\qquad
\ell\le a\le\frac L8,
\qquad
R\le\frac L{16}.
\end{align*}
Since $m_o\le1$, \eqref{eq:return-good-event} gives,
on $G(o;R,\ell)$,
\begin{align*}
m_o\overline p_t(o,o)
&\le C_dt^{-\alpha}
+\exp\left(-c_dt^{1-2\alpha/d}\right)
+C_dt^{-2\delta k}\\
&\le C_dt^{-\alpha}.
\end{align*}
Here $1-2\alpha/d>0$ and $2\delta k\ge\alpha$.

For every configuration,
\begin{align*}
0\le m_o\overline p_t(o,o)
\le\mathbb P_o(X_t=o)\le1.
\end{align*}
Splitting according to the good event therefore yields
\begin{align*}
\left(\mathbb E
|m_o\overline p_t(o,o)|^p\right)^{1/p}
&\le C_dt^{-\alpha}
+\mathbb P(G(o;R,\ell)^c)^{1/p}.
\end{align*}
Choose $A$ sufficiently large that
\begin{align*}
\delta A-d(1/2+\delta)\ge p\alpha.
\end{align*}
By \eqref{eq:good-event-probability},
\begin{align*}
\mathbb P(G(o;R,\ell)^c)^{1/p}
&\le C_{d,A,p}(1+R)^{d/p}\ell^{-A/p}\\
&\le C_{d,p}
t^{[d(1/2+\delta)-\delta A]/p}\\
&\le C_{d,p}t^{-\alpha}.
\end{align*}
This proves \eqref{eq:return-before-cutoff}.
\end{proof}

\subsection{Collection of results and variance upper bound}
\label{sec:results_collection3}

Now we collect the results we have generated in this section.

\begin{corollary}\label{cor:return-all-times}
Put
\begin{align*}
\beta:=\frac{\alpha}{1+2\delta}
=\frac{3d+4}{8}>\frac d4+1.
\end{align*}
For every $p\ge1$ and $t\ge0$,
\begin{align}\label{eq:return-all-times}
\left(\mathbb E
|m_o\overline p_t(o,o)|^p\right)^{1/p}
\le C_{d,p}(1+t)^{-\beta}.
\end{align}
The bound holds for either transfer configuration
under the conditioning described in
Section~\ref{sec:small-time}, uniformly in $L$,
the positive transfer masses, and all fixed locations.
\end{corollary}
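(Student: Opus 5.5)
We split the time axis into three ranges: $0\le t\le1$, $1\le t\le t_L$, and $t\ge t_L$. Throughout we use the deterministic bound
\begin{align*}
0\le m_o\overline p_t(o,o)\le\mathbb P_o(X_t=o)\le1.
\end{align*}

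\emph{Short times.} For $0\le t\le1$, the deterministic bound gives the claim directly, since $(1+t)^{-\beta}\ge2^{-\beta}$.

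\emph{Intermediate times.} For $1\le t\le t_L$, Lemma~\ref{lem:return-before-cutoff} gives the bound $C_{d,p}t^{-\alpha}$. Since $\alpha\ge\beta$ and $t\ge1$, we have $t^{-\alpha}\le 2^{\beta}(1+t)^{-\beta}$. This already yields \eqref{eq:return-all-times} in this range, and with the stronger exponent $\alpha$.

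\emph{Large times.} For $t\ge t_L$ we use the Poincar\'e decay \eqref{eq:return-large-time} with $t_0=t_L$. This gives pathwise
\begin{align*}
m_o\overline p_t(o,o)\le e^{-c_d(t-t_L)/L^2}\,m_o\overline p_{t_L}(o,o)\le m_o\overline p_{t_L}(o,o),
\end{align*}
where we used that $\overline p_t(o,o)\ge0$; this holds because $\overline p_{2t}(o,o)=\sum_r m_r f_t(r)^2$, and \eqref{eq:return-large-time} is itself a monotone decay statement. Taking $L^p$ norms and applying Lemma~\ref{lem:return-before-cutoff} at $t=t_L$ (assuming $t_L\ge1$) gives the bound $C_{d,p}\,t_L^{-\alpha}e^{-c_d(t-t_L)/L^2}$.

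The definition of $t_L$ gives $t_L^{-\alpha}=(16C_0)^{2\beta}L^{-2\beta}$, because $\alpha\cdot 2/(1+2\delta)=2\beta$. Note also that $L^2\ge c\, t_L^{1+2\delta}\ge c\,t_L$. The remaining task is to show
\begin{align*}
L^{-2\beta}e^{-c(t-t_L)/L^2}\le C(1+t)^{-\beta}
\qquad\text{for }t\ge t_L.
\end{align*}
We do this in two subranges.
\begin{itemize}
\item If $t\le 2L^2$, then $(1+t)^{\beta}\le C L^{2\beta}$, so $L^{-2\beta}\le C(1+t)^{-\beta}$ and we may simply drop the exponential.
\item If $t\ge2L^2$, then $t-t_L\ge t/2$, because $t_L\le L^2$ once $L$ is large; for bounded $L$ the whole statement is trivial after adjusting constants. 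In this case $e^{-ct/(2L^2)}\le C_\beta (L^2/t)^{\beta}$, and multiplying by $L^{-2\beta}$ gives $C t^{-\beta}$.
\end{itemize}

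\emph{Degenerate small $L$.} If $t_L<1$, i.e.\ $L$ is bounded by a constant depending on $d$, we apply \eqref{eq:return-large-time} from $t_0=0$ instead. Together with the trivial bound $1$, this gives a bound of the form $e^{-c t/L^2}\le C_d(1+t)^{-\beta}$, since $L\le C_d$.

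The main point to check is that exponent bookkeeping: the loss from $\alpha$ to $\beta$ is exactly what is needed so that the value at the cutoff $t_L^{-\alpha}$ matches $L^{-2\beta}$, which is the natural size of the return density at the relaxation time $L^2$. This is why $\beta=\alpha/(1+2\delta)$ appears. The other steps are routine once we use the pathwise monotonicity and decay from \eqref{eq:return-large-time}, which hold for every configuration and therefore commute with taking moments. Uniformity in $L$, the transfer masses, and the fixed locations is inherited from Lemma~\ref{lem:return-before-cutoff}, because \eqref{eq:return-large-time} is deterministic with constants depending only on $d$.
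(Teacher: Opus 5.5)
Your proof is correct and follows essentially the same route as the paper: the trivial bound on $[0,1]$, Lemma~\ref{lem:return-before-cutoff} on $[1,t_L]$, and pathwise monotonicity plus the Poincar\'e decay \eqref{eq:return-large-time} beyond $t_L$, using the identity $t_L^{-\alpha}\asymp L^{-2\beta}$. The only cosmetic difference is that you split the late range at $2L^2$ and restart the exponential decay from $t_L$, whereas the paper splits at $L^2$ and restarts it from $L^2$.
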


\begin{proof}
For $L$ sufficiently large, $1\le t_L\le L^2$.
For $0\le t\le1$, use $m_o\overline p_t(o,o)\le1$.
For $1\le t\le t_L$, apply
Lemma~\ref{lem:return-before-cutoff} and $\alpha>\beta$.

By \eqref{eq:return-large-time}, the centered return
density is nonincreasing. Thus, for $t_L\le t\le L^2$,
\begin{align*}
\left(\mathbb E
|m_o\overline p_t(o,o)|^p\right)^{1/p}
&\le C_{d,p}t_L^{-\alpha}\\
&\le C_{d,p}L^{-2\beta}\\
&\le C_{d,p}(1+t)^{-\beta}.
\end{align*}
For $t\ge L^2$, \eqref{eq:return-large-time} gives
\begin{align*}
\left(\mathbb E
|m_o\overline p_t(o,o)|^p\right)^{1/p}
&\le C_{d,p}L^{-2\beta}
e^{-c_d(t-L^2)/L^2}\\
&\le C_{d,p}(1+t)^{-\beta}.
\end{align*}
For bounded $L$, \eqref{eq:return-large-time} and
$m_o\overline p_0(o,o)\le1$ give the same bound
after increasing $C_{d,p}$.
\end{proof}

For one replacement, we now apply
\eqref{eq:response-return-bound}.
For $p\ge2$, Minkowski's inequality and
\eqref{eq:return-all-times} give
\begin{align*}
\left(\mathbb E
\|\psi_s\|_{L^2(\T_L^d)}^p\right)^{2/p}
&=\left\|\|\psi_s\|_{L^2(\T_L^d)}^2
\right\|_{L^{p/2}(\mathbb P)}\\
&\le2\left\|
\sum_{o\in\{p,q\}}\int_0^\infty
t\,\overline p_t(o,o)\dd t
\right\|_{L^{p/2}(\mathbb P)}\\
&\le2\sum_{o\in\{p,q\}}\int_0^\infty
t\,\|\overline p_t(o,o)\|_{L^{p/2}(\mathbb P)}
\dd t\\
&=2\sum_{o\in\{p,q\}}\int_0^\infty t
\left(\mathbb E
|\overline p_t(o,o)|^{p/2}\right)^{2/p}\dd t\\
&\le C_{d,p}
\left(\frac1s+\frac1{1-s}\right)
\int_0^\infty t(1+t)^{-\beta}\dd t.
\end{align*}
Since $\beta>2$, the last integral is finite.
Taking square roots gives the following bound for
$p\ge2$; the case $1\le p<2$ follows from
H\"older's inequality.

\begin{lemma}\label{lem:potential-response-moments}
For every $p\ge1$ and $s\in(0,1)$,
\begin{align}\label{eq:potential-response-moments}
\left(\mathbb E
\|\psi_s\|_{L^2(\T_L^d)}^p\right)^{1/p}
\le C_{d,p}
\left(s^{-1/2}+(1-s)^{-1/2}\right).
\end{align}
The bound is uniform in $L$ and the fixed transfer
locations $\{p,q\}$.
\end{lemma}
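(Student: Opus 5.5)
The plan is to combine the return-probability representation \eqref{eq:response-return-bound} with the averaged return bound of Corollary~\ref{cor:return-all-times}, taking care of the masses $m_p=1-s$ and $m_q=s$ of the transfer sites. First treat $p\ge2$. Lemma~\ref{lem:walk-response} and \eqref{eq:response-return-bound} give, pointwise in the configuration,
\begin{align*}
\|\psi_s\|_{L^2(\T_L^d)}^2
\le2\int_0^\infty t\bigl(\overline p_t(q,q)+\overline p_t(p,p)\bigr)\dd t .
\end{align*}
Take the $L^{p/2}(\mathbb P)$ norm of both sides. Since $\overline p_t(o,o)\ge0$ by \eqref{eq:return-large-time}, Minkowski's integral inequality moves the norm inside the time integral. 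It then suffices to bound $\|\overline p_t(o,o)\|_{L^{p/2}(\mathbb P)}$ for $o\in\{p,q\}$.

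Write $\overline p_t(o,o)=m_o^{-1}\cdot m_o\overline p_t(o,o)$. The masses $m_p=1-s$ and $m_q=s$ are deterministic, so
\begin{align*}
\|\overline p_t(o,o)\|_{L^{p/2}(\mathbb P)}
\le m_o^{-1}C_{d,p}(1+t)^{-\beta}
\end{align*}
by Corollary~\ref{cor:return-all-times}, applied with root $o$ a transfer site. That corollary is uniform in the positive transfer masses and the fixed locations, which is exactly what is needed here. Because $\beta=(3d+4)/8>2$ for $d\ge5$, the integral $\int_0^\infty t(1+t)^{-\beta}\dd t$ is finite and depends only on $d$. Hence
\begin{align*}
\bigl(\mathbb E\|\psi_s\|_{L^2(\T_L^d)}^p\bigr)^{2/p}\le C_{d,p}\bigl(s^{-1}+(1-s)^{-1}\bigr).
\end{align*}
Taking square roots and using $\sqrt{x+y}\le\sqrt x+\sqrt y$ gives \eqref{eq:potential-response-moments}. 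For $1\le p<2$, Hölder (or Lyapunov) gives $\|\cdot\|_{L^p(\mathbb P)}\le\|\cdot\|_{L^2(\mathbb P)}$, so this range follows from the case $p=2$.

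The only genuine point to check is that Corollary~\ref{cor:return-all-times} applies uniformly as $s\to0$ or $s\to1$. In that limit one transfer cell degenerates, and the event $G(o;R,\ell)$ still has to be controlled through Lemma~\ref{lem:good-event-probability}. Both results are stated uniformly in the positive transfer masses, so the whole dependence on $s$ is isolated in the explicit factor $m_o^{-1}$. That factor produces the $s^{-1/2}+(1-s)^{-1/2}$ singularity, and this singularity is integrable in $s$, as Lemma~\ref{lem:replacement-response-bound} requires. Uniformity in $L$ likewise follows from the uniformity of the corollary.
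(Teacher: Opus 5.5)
Your proposal is correct and follows the paper's proof essentially step for step: you bound $\|\psi_s\|_{L^2}^2$ via \eqref{eq:response-return-bound}, move the $L^{p/2}(\mathbb P)$ norm inside the time integral by Minkowski, and apply Corollary~\ref{cor:return-all-times} with the deterministic factor $m_o^{-1}\in\{s^{-1},(1-s)^{-1}\}$; then $\beta>2$, square roots, and H\"older for $1\le p<2$ finish the argument. One small correction: nonnegativity of $\overline p_t(o,o)$ follows from the identity $\overline p_{2t}(o,o)=\sum_r m_r f_t(r)^2$, not from \eqref{eq:return-large-time}, and Minkowski's integral inequality does not need it anyway.
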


As a reward for our hard work in this section,
we prove the asymptotically correct variance upper
bound for the matching cost.

\begin{corollary}\label{cor:variance-upper-bound}
We have
\begin{align}\label{eq:variance-upper-bound}
\Var(C_L)\le C_dn.
\end{align}
\end{corollary}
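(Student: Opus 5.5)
The plan is to combine the Efron--Stein bound \eqref{eq:strategy-efron-stein}, $\Var(C_L)\le\frac n2\E|F(1)-F(0)|^2$, with Lemma~\ref{lem:replacement-response-bound}. It then suffices to show that each of the three terms on the right-hand side of \eqref{eq:replacement-response-bound} is bounded by a constant $C_d$, uniformly in $L$.

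\textbf{The integral term.} By Lemma~\ref{lem:walk-response}, $\sum_r m_r(s)|w_r'(s)|^2=\|\psi_s\|_{L^2}^2$. Lemma~\ref{lem:potential-response-moments} with $p=2$, after averaging over the transfer locations, then gives
\begin{align*}
\left(\E\|\psi_s\|_{L^2}^2\right)^{1/2}\le C_d\left(s^{-1/2}+(1-s)^{-1/2}\right).
\end{align*}
This is integrable on $(0,1)$, so the integral term is at most $C_d$. One technical point: the fundamental theorem of calculus for $w_q$ and $w_p$ on the closed interval requires continuity of the weights up to the endpoints. I would handle this by working on $[\varepsilon,1-\varepsilon]$ as in Lemma~\ref{lem:split-paths} and letting $\varepsilon\downarrow0$. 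This needs $w_q(s)\to w_q(1)$ and $w_p(s)\to w_p(0)$, which follows from continuity of the dual optimizers, or can be bypassed by bounding $w_q(1-\varepsilon)$ directly by the same argument as for the endpoint.

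\textbf{The endpoint moments.} At $s=1$ the configuration is $n$ unit sites $z_1,\dots,z_{n-1},q$, and $w_q(1)$ is the normalized weight of the site $q$. I would write
\begin{align*}
w_q=\phi_w(y)+\rho_L(y,q)^2 \quad\text{for } y\in C_q,
\end{align*}
so that $|w_q|\le \sup_{C_q}|\phi_w|+R_q^2$. To control $\phi_w$, I would use that it is locally Lipschitz with constant $2R_u$ on each cell, and that its average over $\T_L^d$ is zero.

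\textbf{Main obstacle.} The difficulty is that the zero-mean normalization is global, while the Lipschitz bounds are local. A naive chaining of Lipschitz bounds across the whole torus gives a factor of $L$, which is too large.

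I would try first to use the dual identity $\sum_r m_r w_r=C_L$ together with the equidistribution of the sites, which is symmetric under exchange. By exchangeability, $\E w_q=\E C_L/n=O(1)$. For the second moment, I would write
\begin{align*}
w_q-\overline w=(w_q-\phi_w(q))+\bigl(\phi_w(q)-\textstyle\frac1n\int\phi_w\bigr),
\end{align*}
and control $\E|\phi_w(q)|^2=\frac1n\E\|\phi_w\|_{L^2}^2$ by exchangeability (averaging over the uniform location $q$). It then remains to bound $\E\|\phi_w\|_{L^2}^2\le C_dn$. Here I would use that $\phi_w$ equals, up to a quadratic shift, the convex Brenier potential. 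The cell-radius moments of Appendix~\ref{app:radius-moments} bound its gradient $2(y-T(y))$ in $L^p$. Combined with a Poincaré-type argument on the torus, I would aim to get variance of order $n$, and therefore $\E|w_q(1)|^2\le C_d$.

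If this global route fails, the fallback is to bound $\|\phi_w\|_{L^2}^2$ via the discrete Poincaré inequality of Lemma~\ref{lem:cell-poincare} applied in a localized, dyadic way. Combining the three bounds gives $\E|F(1)-F(0)|^2\le C_d$, and hence $\Var(C_L)\le C_dn$.
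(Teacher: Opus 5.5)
Your overall structure matches the paper: Efron--Stein, then Lemma~\ref{lem:replacement-response-bound}, with the integral term handled by Lemmas~\ref{lem:walk-response} and~\ref{lem:potential-response-moments}. The endpoint-moment step, however, has a genuine gap. You correctly reduce $\E|w_q(1)|^2\le C_d$ to $\E\|\phi\|_{L^2}^2\le C_dn$ plus radius terms, but neither route you propose gives this.

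The cell radii control only the gradient, $\|\nabla\phi\|_{L^2}^2=4C_L\lesssim n$. Any Poincar\'e inequality on $\T_L^d$, including Lemma~\ref{lem:cell-poincare}, costs a factor $L^2$. So you only get $\E\|\phi\|_{L^2}^2\lesssim nL^2$, hence $\E|w_q(1)|^2\lesssim L^2$ and $\Var(C_L)\lesssim nL^2$. This is exactly the non-sharp bound recorded in Appendix~\ref{app:direct-poincare}. The loss is intrinsic: for $d\ge5$ the fluctuations of $\phi$ live at unit scale, whereas Poincar\'e is sharp only at scale $L$. A \emph{localized, dyadic} version would need precise control of the local averages of $\phi$ at every scale, which you do not supply and which the radius moments alone do not give.

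The missing idea is to reuse the random-walk estimate a second time, through Efron--Stein applied to the potential itself. Put $\phi_L:=\phi_0$ and $\Delta\phi_L:=\phi_1-\phi_0$. Minkowski's inequality gives $(\E\|\Delta\phi_L\|_{L^2}^2)^{1/2}\le\int_0^1(\E\|\psi_s\|_{L^2}^2)^{1/2}\dd s\le C_d$. Translation invariance and $\int\phi_L=0$ give $\E\phi_L(0)=0$. Efron--Stein for the scalar $\phi_L(0)$, viewed as a function of the $n$ independent sites, together with translation invariance, then yields
\begin{align*}
\E|\phi_L(0)|^2\le\frac n2\E|\Delta\phi_L(0)|^2=\frac12\E\|\Delta\phi_L\|_{L^2}^2\le C_d,
\end{align*}
so $\E\|\phi_L\|_{L^2}^2=n\E|\phi_L(0)|^2\le C_dn$. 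At $s=0$ every cell has unit mass, so $w_r(0)=\int_{C_r}(\phi_L(y)+\rho_L(y,r)^2)\dd y$. Jensen's inequality, exchangeability and the radius moments then give $\E|w_p(0)|^2\le 2\E|\phi_L(0)|^2+C_d$, and the same argument works at $s=1$.

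Averaging over the cell, rather than using $\sup_{C_q}|\phi_w|$, also repairs a smaller slip in your plan. The identity $\E|\phi_w(q)|^2=\frac1n\E\|\phi_w\|_{L^2}^2$ is not correct as stated, because $q$ is itself one of the sites defining $\phi_w$. Exchangeability averages over sites, not over space.
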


\begin{proof}
Let $z_1,\ldots,z_{n-1},p,q$
be independent and uniform on $\T_L^d$, and
consider the one-replacement interpolation.
By Lemmas~\ref{lem:walk-response} and
\ref{lem:potential-response-moments},
\begin{align*}
\int_0^1
\left(
\E\sum_{r\in\mathcal Z}m_r(s)|w_r'(s)|^2
\right)^{1/2}\dd s
&=\int_0^1
\left(\E\|\psi_s\|_{L^2}^2\right)^{1/2}\dd s\\
&\le C_d\int_0^1
\left(s^{-1/2}+(1-s)^{-1/2}\right)\dd s\\
&\le C_d.
\end{align*}
Write $\phi_L:=\phi_0$ and
$\Delta\phi_L:=\phi_1-\phi_0$.
Minkowski's inequality gives
\begin{align*}
\left(\E\|\Delta\phi_L\|_{L^2}^2\right)^{1/2}
&\le\int_0^1
\left(\E\|\psi_s\|_{L^2}^2\right)^{1/2}\dd s\\
&\le C_d.
\end{align*}

We next bound the endpoint weights.
Translation invariance and normalization imply
$\E\phi_L(0)=0$.
Efron--Stein and translation invariance therefore give
\begin{align}\label{eq:potential-point-moment}
\E|\phi_L(0)|^2
&\le\frac n2\E|\Delta\phi_L(0)|^2\notag\\
&=\frac12\E\|\Delta\phi_L\|_{L^2}^2
\le C_d.
\end{align}
At $s=0$, every active cell has mass one, so
\begin{align*}
w_r(0)=\int_{C_r(w(0))}
\left(\phi_L(y)+\rho_L(y,r)^2\right)\dd y.
\end{align*}
Let $R_r$ denote its radius.
Jensen's inequality, exchangeability, and
Lemma~\ref{lem:conditional-radius-moments} yield
\begin{align}\label{eq:weight-second-moment}
\E|w_p(0)|^2
&=\frac1n\E
\sum_{r\in\mathcal Z\setminus\{q\}}|w_r(0)|^2
\notag\\
&\le\frac2n\E\|\phi_L\|_{L^2}^2
+\frac2n\E
\sum_{r\in\mathcal Z\setminus\{q\}}R_r^4
\notag\\
&\le2\E|\phi_L(0)|^2+C_d
\le C_d.
\end{align}
The same argument at $s=1$ gives
$\E|w_q(1)|^2\le C_d$.

Lemma~\ref{lem:replacement-response-bound}
now implies
\begin{align*}
\left(\E|F(1)-F(0)|^2\right)^{1/2}
&\le
\left(\E|w_q(1)|^2\right)^{1/2}
+\left(\E|w_p(0)|^2\right)^{1/2}\\
&\quad+\int_0^1
\left(
\E\sum_{r\in\mathcal Z}m_r(s)|w_r'(s)|^2
\right)^{1/2}\dd s\\
&\le C_d.
\end{align*}
Finally, Efron--Stein gives
\begin{align*}
\Var(C_L)
&\le\frac n2\E|F(1)-F(0)|^2\\
&\le C_dn.
\end{align*}
\end{proof}

\section{Replacement estimates}
\label{sec:mixed-replacements}

We use the walk estimates from Section~\ref{sec:geometry}
to control the interaction between two replacements
and the fourth moment of a single replacement cost.
These estimates will be used to verify the conditions
of the martingale central limit theorem.

\subsection{Off-diagonal estimates}
\label{sec:off-diagonal}

We first estimate the transition density between
two distinct sites. For $v\in\mathcal Z$, define
\begin{align*}
\tau_R^v:=\inf\{t\ge0:\rho_L(X_t,v)\ge R\},
\end{align*}
the first exit time of the walk from $B_R(v)$.

\begin{lemma}\label{lem:off-diagonal-exit}
Let $R>0$ and let $o,u\in\mathcal Z$ satisfy
$\rho_L(o,u)\ge2R$. For every $t\ge0$,
\begin{align}\label{eq:off-diagonal-exit}
\sqrt{m_om_u}\,p_{2t}(o,u)
\le
\mathbb P_o(\tau_R^o\le t)^{1/2}
+\mathbb P_u(\tau_R^u\le t)^{1/2}.
\end{align}
\end{lemma}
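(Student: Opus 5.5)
We will split the time interval $[0,2t]$ at its midpoint and use reversibility. By the semigroup property and the symmetry $p_t(r,v)=p_t(v,r)$,
\begin{align*}
p_{2t}(o,u)=\sum_{r\in\mathcal Z}m_r\,p_t(o,r)\,p_t(r,u)
=\sum_{r\in\mathcal Z}m_r\,p_t(o,r)\,p_t(u,r).
\end{align*}
Since $\rho_L(o,u)\ge2R$, the open balls $B_R(o)$ and $B_R(u)$ are disjoint. Every site $r$ therefore satisfies either $\rho_L(r,o)\ge R$ or $\rho_L(r,u)\ge R$. We split the sum accordingly into $S_o$, the sum over $r$ with $\rho_L(r,o)\ge R$, and $S_u$, the sum over the remaining $r$, which all satisfy $\rho_L(r,u)\ge R$.

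For $S_o$ we apply Cauchy--Schwarz with weights $m_r$:
\begin{align*}
S_o\le\Bigl(\sum_{\rho_L(r,o)\ge R}m_rp_t(o,r)^2\Bigr)^{1/2}
\Bigl(\sum_{r}m_rp_t(u,r)^2\Bigr)^{1/2}.
\end{align*}
The second factor is $p_{2t}(u,u)^{1/2}$ by the semigroup identity, and $p_{2t}(u,u)\le 1/m_u$ because $m_up_{2t}(u,u)=\mathbb P_u(X_{2t}=u)\le1$. For the first factor we use $m_rp_t(o,r)=\mathbb P_o(X_t=r)\le1$. Combined with $p_t(o,r)=p_t(r,o)=\mathbb P_r(X_t=o)/m_o\le 1/m_o$, this gives
\begin{align*}
\sum_{\rho_L(r,o)\ge R}m_rp_t(o,r)^2\le\frac1{m_o}\sum_{\rho_L(r,o)\ge R}\mathbb P_o(X_t=r)=\frac{\mathbb P_o(\rho_L(X_t,o)\ge R)}{m_o}\le\frac{\mathbb P_o(\tau_R^o\le t)}{m_o}.
\end{align*}
The last inequality holds because being outside $B_R(o)$ at time $t$ forces exit by time $t$. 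Hence $\sqrt{m_om_u}\,S_o\le\mathbb P_o(\tau^o_R\le t)^{1/2}$. The same argument with the roles of $o$ and $u$ exchanged gives $\sqrt{m_om_u}\,S_u\le\mathbb P_u(\tau^u_R\le t)^{1/2}$. Adding the two bounds proves \eqref{eq:off-diagonal-exit}.

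The argument is elementary; the only point needing care is the pointwise bound $p_t(o,r)\le1/m_o$. This bound uses reversibility to exchange the order of $o$ and $r$, not the trivial bound $\mathbb P_o(X_t=r)\le1$, which would instead produce a factor $1/m_r$. The other point is that the disjointness of the two balls, which is the only use of the hypothesis $\rho_L(o,u)\ge2R$, lets every site be charged to one of the two exit events. No geometric input from the good event $G$ is needed. The geometry of the cells enters later, when the right-hand side is bounded through Corollary~\ref{cor:walk-exit-iterated} and Lemma~\ref{lem:good-event-probability}.
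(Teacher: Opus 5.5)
Your proposal is correct and follows the paper's argument: the same midpoint split via the semigroup identity and symmetry, the same partition of the sites into $r\notin B_R(o)$ and $r\in B_R(o)$ (the latter lying outside $B_R(u)$), the same Cauchy--Schwarz, and the same reversibility bound $m_rp_t(o,r)^2\le\mathbb P_o(X_t=r)/m_o$. The only cosmetic difference is that you bound the unrestricted factor by $p_{2t}(u,u)\le1/m_u$, whereas the paper bounds it by the $A=\mathcal Z$ case of the same reversibility estimate.
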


\begin{proof}
For any $A\subseteq\mathcal Z$, reversibility gives
\begin{align*}
\sum_{r\in A}m_rp_t(o,r)^2
&=\frac1{m_o}\sum_{r\in A}
\mathbb P_o(X_t=r)\mathbb P_r(X_t=o)\\
&\le\frac1{m_o}\mathbb P_o(X_t\in A).
\end{align*}
In particular,
\begin{align*}
\sum_{r\in\mathcal Z}m_rp_t(o,r)^2\le\frac1{m_o}.
\end{align*}
The same bounds hold with $u$ in place of $o$.

Since $B_R(o)\cap B_R(u)=\varnothing$, the semigroup
property, symmetry, and Cauchy--Schwarz give
\begin{align*}
p_{2t}(o,u)
&=\sum_{r\in\mathcal Z}m_rp_t(o,r)p_t(u,r)\\
&\le
\left(\sum_{r\notin B_R(o)}m_rp_t(o,r)^2\right)^{1/2}
\left(\sum_{r\notin B_R(o)}m_rp_t(u,r)^2\right)^{1/2}\\
&\quad+
\left(\sum_{r\in B_R(o)}m_rp_t(o,r)^2\right)^{1/2}
\left(\sum_{r\in B_R(o)}m_rp_t(u,r)^2\right)^{1/2}\\
&\le\frac{
\mathbb P_o(X_t\notin B_R(o))^{1/2}
+\mathbb P_u(X_t\notin B_R(u))^{1/2}
}{\sqrt{m_om_u}}.
\end{align*}
The last inequality uses
$B_R(o)\subseteq B_R(u)^c$.
Being outside a ball at time $t$ requires leaving
it by time $t$, which proves
\eqref{eq:off-diagonal-exit}.
\end{proof}

We now average over the independent uniform background
sites, keeping the transfer and prescribed background
locations fixed, as in Section~\ref{sec:small-time}.
Our goal is to bound
$\int_0^\infty|\overline p_t(o,u)|\dd t$
in terms of the distance $\rho_L(o,u)$. Put
\begin{align}\label{eq:spatial-decay-exponent}
\gamma:=\frac{5d-4}{8}>\frac d2.
\end{align}
Since $2\gamma>d$,
\begin{align*}
\int_{\T_L^d}(1+\rho_L(0,u))^{-2\gamma}\dd u
\le\int_{\R^d}(1+|u|)^{-2\gamma}\dd u
\le C_d.
\end{align*}
This integrability will be used in
Lemma~\ref{lem:replacement-fourth-moment}.

\begin{lemma}\label{lem:off-diagonal-occupation}
Fix either transfer configuration and all its transfer
locations. We may also fix the location of one
background site. Let $o,u$ be two distinct sites
among these fixed sites, and let the remaining
background sites be independent and uniform on
$\T_L^d$. For every $p\ge1$,
\begin{align}\label{eq:off-diagonal-occupation}
\left(
\E\left[
\left(\int_0^\infty
|\overline p_t(o,u)|\dd t\right)^p
\right]
\right)^{1/p}
\le
\frac{C_{d,p}}{\sqrt{m_om_u}}
(1+\rho_L(o,u))^{-\gamma}.
\end{align}
The bound is uniform in $L$, the positive transfer
masses, and the locations of all fixed sites.
\end{lemma}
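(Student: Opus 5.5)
The approach is to split the time integral at a time $T$ comparable to $\rho^2$, where $\rho:=\rho_L(o,u)$. For $t\le T$ the walk must travel distance $\rho$, which the exit estimates make very unlikely. For $t\ge T$ we use Cauchy--Schwarz against the on-diagonal return bounds of Corollary~\ref{cor:return-all-times}. Since $|\overline p_t(o,u)|\le p_t(o,u)+1/n$ is awkward for large $t$, we handle the centered kernel directly in the second regime. By the semigroup property and symmetry,
\begin{align*}
\overline p_{t}(o,u)=\sum_r m_r\overline p_{t/2}(o,r)\overline p_{t/2}(u,r),
\end{align*}
and Cauchy--Schwarz gives
\begin{align*}
|\overline p_t(o,u)|\le \overline p_t(o,o)^{1/2}\,\overline p_t(u,u)^{1/2}.
\end{align*}
Taking $L^p(\mathbb P)$ norms, Minkowski's inequality and H\"older's inequality combined with \eqref{eq:return-all-times} (applied with root $o$ and with root $u$, both fixed sites) yield
\begin{align*}
\Bigl\|\int_T^\infty|\overline p_t(o,u)|\dd t\Bigr\|_{L^p}\le \frac{C_{d,p}}{\sqrt{m_om_u}}\int_T^\infty(1+t)^{-\beta}\dd t\le \frac{C_{d,p}}{\sqrt{m_om_u}}(1+T)^{1-\beta}.
\end{align*}

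For $t\le T$, with $\rho\ge2R$, Lemma~\ref{lem:off-diagonal-exit} bounds $\sqrt{m_om_u}\,p_{t}(o,u)$ by $\mathbb P_o(\tau_R^o\le t/2)^{1/2}+\mathbb P_u(\tau_R^u\le t/2)^{1/2}$. The $1/n$ part contributes at most $T/n$. Since $\rho\le L$ and $n=L^d$, this is at most $C\rho^{2-d}$, which is small enough once it is multiplied by $1/\sqrt{m_om_u}\ge1$.

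On the good events $G(o;R,\ell)$ and $G(u;R,\ell)$, Corollary~\ref{cor:walk-exit-iterated} bounds the exit probability by $(C_dk^2(t+\ell^2)/R^2)^{k/2}$ after the square root. Off the good events we simply bound the exit probability by $1$, and Lemma~\ref{lem:good-event-probability} gives $\mathbb P(G^c)\le C(1+R)^d\ell^{-A}$ with $A$ arbitrarily large. We choose $R=\rho/2$ (for $\rho\le 2$ or so the bound is trivial), $\ell=\rho^{\epsilon}$ for a small $\epsilon>0$, and $T=\rho^{2-2\epsilon'}$. Then $(t+\ell^2)/R^2\le C\rho^{-2\epsilon'}$ throughout $[0,T]$, and choosing $k$ large makes the good-event contribution $T\rho^{-\epsilon' k}$ arbitrarily polynomially small. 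The bad-event contribution in $L^p$ is at most $T\,\mathbb P(G^c)^{1/p}$, which is also polynomially small once $A$ is large. One should check that $R\le L/16$ is needed only locally. For $\rho$ comparable to $L$, take $R=\min(\rho/2,L/16)$; then $\rho\le CR$ and nothing changes.

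Combining the two regimes, the total is at most $(m_om_u)^{-1/2}C\bigl(\rho^{(2-2\epsilon')(1-\beta)}+\rho^{-\gamma}\bigr)$. The exponent works out because $2(\beta-1)=(3d-4)/4$, while $\gamma=(5d-4)/8$ requires $2(\beta-1)(1-\epsilon')\ge\gamma$, i.e.\ $(6d-8)(1-\epsilon')\ge 5d-4$. This holds for small $\epsilon'$ when $d\ge5$, since $6d-8>5d-4$ exactly when $d>4$. We then take the worst of the exponents, with the small parameters chosen depending only on $d$. This is the delicate step: the large-time tail only decays like $T^{1-\beta}$, so the cutoff $T$ must be almost $\rho^2$. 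That in turn requires the short-time exit bound to remain superpolynomially small right up to $t\approx\rho^{2-2\epsilon'}$, which is what the iterated exit estimate with a large $k$ (depending on $d$ and $\epsilon'$) provides, at the price of checking the constraint $\ell\le R/(16k)$. The constants are uniform because every input lemma is uniform in the masses and in the fixed locations.
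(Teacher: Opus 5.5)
Your proposal is correct and follows essentially the same route as the paper. For large times you use the Cauchy--Schwarz reduction to the on-diagonal bound of Corollary~\ref{cor:return-all-times}; for $t\le T$ you use Lemma~\ref{lem:off-diagonal-exit} with the iterated exit estimate and the good-event probability. Your cutoff $T=\rho^{2-2\epsilon'}$ plays the role of the paper's $T=R^{\theta}$ with $\theta=\gamma/(\beta-1)<2$.

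The only imprecision is the small-distance case. The constraints $1\le\ell\le R/(16k)$ force a threshold $R_0$ depending on $d$, $k$ and your small exponents, not just ``$\rho\le2$''. Below $R_0$ the bound is not trivial, but it follows from your large-time estimate with $T=0$, exactly as in the paper.
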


\begin{proof}
We separate the distance regime $\rho_L(o,u)\le R_0$ and $\rho_L(o,u)> R_0$ for some fixed $R_0$ independent of $L$.
The semigroup property and $\sum_{r\in\mathcal Z}m_r = n$ give
\begin{align*}
\overline p_t(o,u)
=\sum_{r\in\mathcal Z}m_r
\overline p_{t/2}(o,r)\overline p_{t/2}(u,r).
\end{align*}
Cauchy--Schwarz therefore yields
\begin{align*}
|\overline p_t(o,u)|
\le
\sqrt{\overline p_t(o,o)\overline p_t(u,u)}.
\end{align*}
By H\"older's inequality and
Corollary~\ref{cor:return-all-times},
\begin{align}\label{eq:off-diagonal-time-decay}
\left(\E|\overline p_t(o,u)|^p\right)^{1/p}
&\le
\left(
\E\left[
\overline p_t(o,o)^{p/2}
\overline p_t(u,u)^{p/2}
\right]
\right)^{1/p}\notag\\
&\le
\left(\E\overline p_t(o,o)^p\right)^{1/(2p)}
\left(\E\overline p_t(u,u)^p\right)^{1/(2p)}
\notag\\
&\le
\frac{C_{d,p}}{\sqrt{m_om_u}}(1+t)^{-\beta}.
\end{align}
By Minkowski's inequality and $\beta>1$,
\begin{align*}
\left\|\int_0^\infty|\overline p_t(o,u)|\dd t
\right\|_{L^p(\mathbb P)}
&\le\int_0^\infty
\|\overline p_t(o,u)\|_{L^p(\mathbb P)}\dd t\\
&\le\frac{C_{d,p}}{\sqrt{m_om_u}}
\int_0^\infty(1+t)^{-\beta}\dd t\\
&\le\frac{C_{d,p}}{\sqrt{m_om_u}}.
\end{align*}
For $\rho_L(o,u)\le R_0$, the factor
$(1+\rho_L(o,u))^{-\gamma}$ is bounded below by constant
$(1+R_0)^{-\gamma}$, so the preceding bound implies
\eqref{eq:off-diagonal-occupation} after increasing
the constant.

For $\rho_L(o,u)>R_0$, put
\begin{align*}
R:=\frac{\rho_L(o,u)}{32\sqrt d},
\qquad
\theta:=\frac{\gamma}{\beta-1}
=\frac{5d-4}{3d-4}<2,
\qquad
T:=R^\theta.
\end{align*}
Then $R\le L/64$, and the balls $B_R(o)$ and
$B_R(u)$ are disjoint.

Choose an integer $k\ge1$ and $A>0$ such that
\begin{align*}
\frac{k(2-\theta)}2\ge\theta+\gamma,
\qquad
\frac{\theta A/4-d}{p}\ge\theta+\gamma.
\end{align*}
Set $\ell=R^{\theta/4}$. We choose $R_0$ large
enough, depending only on $d,p$, that
\begin{align*}
R\ge1,
\qquad
1\le\ell\le\frac{R}{16k}.
\end{align*}

On $G=G(o;R,\ell)\cap G(u;R,\ell)$,
Lemma~\ref{lem:off-diagonal-exit}, applied at time
$t/2$, and \eqref{eq:walk-exit-iterated} give,
for $0\le t\le T$,
\begin{align*}
\sqrt{m_om_u}\,p_t(o,u)
&\le C_{d,k}
\left(\frac{t+\ell^2}{R^2}\right)^{k/2}\\
&\le C_{d,k}R^{-k(2-\theta)/2}.
\end{align*}
For every configuration, Lemma~\ref{lem:off-diagonal-exit}
gives $\sqrt{m_om_u}\,p_t(o,u)\le2$. By \eqref{eq:good-event-probability} we have
\begin{align}\label{eq:off-diagonal-short-time}
\left\|\sqrt{m_om_u}\,p_t(o,u)\right\|_{L^p(\mathbb P)}
&\le
\left\|\sqrt{m_om_u}\,p_t(o,u)\1_G\right\|_{L^p(\mathbb P)}
+\left\|\sqrt{m_om_u}\,p_t(o,u)\1_{G^c}\right\|_{L^p(\mathbb P)}
\notag\\
&\le C_{d,k}R^{-k(2-\theta)/2}
+2\mathbb P(G^c)^{1/p}\notag\\
&\le C_{d,k}R^{-k(2-\theta)/2}
+2\left[
\mathbb P(G(o;R,\ell)^c)
+\mathbb P(G(u;R,\ell)^c)
\right]^{1/p}\notag\\
&\le C_{d,k}R^{-k(2-\theta)/2}
+C_{d,A,p}(1+R)^{d/p}\ell^{-A/p}\notag\\
&\le C_{d,k}R^{-k(2-\theta)/2}
+C_{d,A,p}R^{(d-\theta A/4)/p}\notag\\
&\le C_{d,p}R^{-(\theta+\gamma)},
\qquad 0\le t\le T.
\end{align}
We now split the time integral at $T$.
Minkowski's inequality,
$|\overline p_t(o,u)|\le p_t(o,u)+1/n$,
and \eqref{eq:off-diagonal-time-decay} give
\begin{align*}
&\left\|\int_0^\infty
|\overline p_t(o,u)|\dd t\right\|_{L^p(\mathbb P)}\\
&\quad\le
\int_0^T\left(
\|p_t(o,u)\|_{L^p(\mathbb P)}+\frac1n
\right)\dd t
+\int_T^\infty
\|\overline p_t(o,u)\|_{L^p(\mathbb P)}\dd t\\
&\quad\le
\frac{C_{d,p}}{\sqrt{m_om_u}}
\left(
TR^{-(\theta+\gamma)}
+\frac{T}{n}
+T^{1-\beta}
\right).
\end{align*}
Here we used $m_o,m_u\le1$ to put the $T/n$ inside the same prefactor.

Since $T=R^\theta$ and $\theta(\beta-1)=\gamma$,
\begin{align*}
TR^{-(\theta+\gamma)}=R^{-\gamma},
\qquad
T^{1-\beta}=R^{-\gamma}.
\end{align*}
Moreover,
\begin{align*}
d-\theta-\gamma
=\frac{(d-4)(9d-4)}{8(3d-4)}>0.
\end{align*}
Thus $n=L^d\ge R^d$ implies
\begin{align*}
\frac{T}{n}\le R^{\theta-d}\le R^{-\gamma}.
\end{align*}
Combining these bounds gives
\begin{align*}
\left\|\int_0^\infty
|\overline p_t(o,u)|\dd t\right\|_{L^p(\mathbb P)}
\le\frac{C_{d,p}}{\sqrt{m_om_u}}R^{-\gamma}.
\end{align*}
Since $R$ is a fixed multiple of
$\rho_L(o,u)$, this proves
\eqref{eq:off-diagonal-occupation}.
\end{proof}

\subsection{Mixed replacement costs}
\label{sec:mixed-costs}

Consider the two-transfer configuration from
Section~\ref{sec:split-paths}.
Write $p_\tau^{s,t}$ for its transition density,
where $\tau$ denotes walk time and $s,t$ are the
transfer parameters.

For fixed $s$, the argument in the proof of
Lemma~\ref{lem:walk-response}, applied to the
second transfer, gives
\begin{align*}
\partial_t w_r(s,t)
=\int_0^\infty
\bigl(
\overline p_\tau^{s,t}(r,q_2)
-\overline p_\tau^{s,t}(r,p_2)
\bigr)\dd\tau.
\end{align*}
Since
$\partial_sF(s,t)=w_{q_1}(s,t)-w_{p_1}(s,t)$,
we obtain
\begin{align}\label{eq:mixed-response-kernel}
\partial_t\partial_sF(s,t)
&=\int_0^\infty
\Bigl[
\overline p_\tau^{s,t}(q_1,q_2)
-\overline p_\tau^{s,t}(q_1,p_2)
\notag\\
&\hspace{35mm}
-\overline p_\tau^{s,t}(p_1,q_2)
+\overline p_\tau^{s,t}(p_1,p_2)
\Bigr]\dd\tau.
\end{align}
Thus the mixed derivative is determined by the
transition densities between the two pairs of
transfer sites.

Fix the four transfer locations and let the background
sites be independent and uniform on $\T_L^d$.
Define the mixed replacement cost by
\begin{align}\label{eq:mixed-replacement-cost}
\Delta_{p_1,q_1}\Delta_{p_2,q_2}C_L
:=F(1,1)-F(1,0)-F(0,1)+F(0,0).
\end{align}

\begin{lemma}\label{lem:mixed-replacement-decay}
For every $p\ge1$,
\begin{align}\label{eq:mixed-replacement-decay}
\left(
\E|\Delta_{p_1,q_1}\Delta_{p_2,q_2}C_L|^p
\right)^{1/p}
\le C_{d,p}
\sum_{\substack{a\in\{p_1,q_1\}\\b\in\{p_2,q_2\}}}
(1+\rho_L(a,b))^{-\gamma}.
\end{align}
The bound is uniform in $L$ and the fixed transfer
locations. It remains valid when one background
location is also fixed, uniformly in that location.
\end{lemma}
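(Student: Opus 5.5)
The plan is to combine the integral representation of the mixed replacement cost in Lemma~\ref{lem:two-split-paths} with the off-diagonal occupation bound of Lemma~\ref{lem:off-diagonal-occupation}. By \eqref{eq:mixed-replacement-path} and \eqref{eq:mixed-response-kernel}, for $0<s,t<1$ the integrand is a signed sum of four terms $\pm\int_0^\infty\overline p_\tau^{s,t}(a,b)\dd\tau$ with $a\in\{p_1,q_1\}$, $b\in\{p_2,q_2\}$. Hence, pointwise in the configuration,
\begin{align*}
|\Delta_{p_1,q_1}\Delta_{p_2,q_2}C_L|
\le\liminf_{\varepsilon\downarrow0}
\int_\varepsilon^{1-\varepsilon}\int_\varepsilon^{1-\varepsilon}
\sum_{a,b}\int_0^\infty|\overline p_\tau^{s,t}(a,b)|\dd\tau\,\dd t\,\dd s.
\end{align*}
Fatou's lemma lets me pass the $L^p(\mathbb P)$ norm through the limit. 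Minkowski's inequality then moves the norm inside the $s,t$ integrals and the sum over $(a,b)$.

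For each fixed $(s,t)\in(0,1)^2$ and each pair $(a,b)$, I apply Lemma~\ref{lem:off-diagonal-occupation} to the two-transfer configuration, with $o=a$ and $u=b$. These are distinct fixed transfer sites, and one background site may additionally be fixed, which covers the last sentence of the statement. This gives
\begin{align*}
\left\|\int_0^\infty|\overline p_\tau^{s,t}(a,b)|\dd\tau\right\|_{L^p(\mathbb P)}
\le\frac{C_{d,p}}{\sqrt{m_am_b}}(1+\rho_L(a,b))^{-\gamma},
\end{align*}
uniformly in $L$, the masses and the fixed locations. The masses are $m_a\in\{s,1-s\}$ and $m_b\in\{t,1-t\}$, so the prefactor factorizes as
\[
(m_a)^{-1/2}(m_b)^{-1/2}\le\bigl(s^{-1/2}+(1-s)^{-1/2}\bigr)\bigl(t^{-1/2}+(1-t)^{-1/2}\bigr).
\]
This product is integrable over $(0,1)^2$ with integral bounded by an absolute constant. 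Integrating in $s,t$ therefore yields \eqref{eq:mixed-replacement-decay}.

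The main point requiring care is justifying these interchanges. First, measurability and the existence of the limit in \eqref{eq:mixed-replacement-path} hold pointwise by Lemma~\ref{lem:two-split-paths}, so Fatou applies to $|\cdot|^p$. Second, the uniformity of Lemma~\ref{lem:off-diagonal-occupation} in the positive transfer masses is exactly what permits the blowup near the boundary of $(0,1)^2$ to be absorbed by the explicit factor $1/\sqrt{m_am_b}$. No additional endpoint analysis is needed, since the integrable singularity $s^{-1/2}$ handles the degeneration as $s\to0$ or $s\to1$.

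If the case $\rho_L(a,b)$ small, or even $a=b$ as locations, caused trouble, it is handled by the assumption that sites are distinct (almost surely) together with the first regime of the proof of Lemma~\ref{lem:off-diagonal-occupation}, where the factor $(1+\rho_L)^{-\gamma}$ is bounded below.
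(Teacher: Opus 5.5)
Your proposal is correct and follows essentially the same route as the paper. You bound $\|\partial_s\partial_tF(s,t)\|_{L^p(\mathbb P)}$ through \eqref{eq:mixed-response-kernel} and Lemma~\ref{lem:off-diagonal-occupation}, which gives the prefactor $1/\sqrt{m_a(s)m_b(t)}$, and then integrate over $(0,1)^2$ using the integrability of that prefactor. The only difference is cosmetic: the paper first uses the $p=1$ bound and dominated convergence to rewrite the mixed cost as the full double integral \eqref{eq:mixed-replacement-integral} before applying Minkowski, whereas you bound the $\varepsilon$-truncated integrals by monotonicity and pass to the limit with Fatou, which works equally well.
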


\begin{proof}
By \eqref{eq:mixed-response-kernel} and
Lemma~\ref{lem:off-diagonal-occupation},
\begin{align*}
\|\partial_s\partial_tF(s,t)\|_{L^p(\mathbb P)}
&\le
\sum_{\substack{a\in\{p_1,q_1\}\\b\in\{p_2,q_2\}}}
\left\|
\int_0^\infty
|\overline p_\tau^{s,t}(a,b)|\dd\tau
\right\|_{L^p(\mathbb P)}\\
&\le C_{d,p}
\sum_{\substack{a\in\{p_1,q_1\}\\b\in\{p_2,q_2\}}}
\frac{(1+\rho_L(a,b))^{-\gamma}}
{\sqrt{m_a(s)m_b(t)}}.
\end{align*}
Here $m_a(s)$ is either $s$ or $1-s$, and
$m_b(t)$ is either $t$ or $1-t$. Consequently,
for each pair $a,b$,
\begin{align*}
\int_0^1\int_0^1
\frac{\dd t\,\dd s}{\sqrt{m_a(s)m_b(t)}}
&=
\left(\int_0^1\frac{\dd s}{\sqrt{m_a(s)}}\right)
\left(\int_0^1\frac{\dd t}{\sqrt{m_b(t)}}\right)\\
&=4.
\end{align*}
In particular, the preceding estimate with $p=1$
and Tonelli's theorem give
\begin{align*}
\int_0^1\int_0^1
|\partial_s\partial_tF(s,t)|\dd t\,\dd s<\infty.
\end{align*}
By the dominated convergence theorem, we may pass
to the full square in \eqref{eq:mixed-replacement-path},
obtaining
\begin{align}\label{eq:mixed-replacement-integral}
\Delta_{p_1,q_1}\Delta_{p_2,q_2}C_L
=\int_0^1\int_0^1
\partial_s\partial_tF(s,t)\dd t\,\dd s.
\end{align}
Minkowski's inequality now gives
\begin{align*}
\|\Delta_{p_1,q_1}\Delta_{p_2,q_2}C_L\|_{L^p(\mathbb P)}
&\le
\int_0^1\int_0^1
\|\partial_s\partial_tF(s,t)\|_{L^p(\mathbb P)}
\dd t\,\dd s\\
&\le C_{d,p}
\sum_{\substack{a\in\{p_1,q_1\}\\b\in\{p_2,q_2\}}}
(1+\rho_L(a,b))^{-\gamma}.
\end{align*}
The conditional statement follows from the same
argument, since Lemma~\ref{lem:off-diagonal-occupation}
allows one background location to be fixed.
\end{proof}

\subsection{Fourth moment of a replacement cost}
\label{sec:replacement-moments}

Fix distinct locations $p,q$ and let
$\mathcal B=\{z_1,\ldots,z_{n-1}\}$ be the independent
uniform background sites. For the one-transfer path
from Section~\ref{sec:split-paths}, write
\begin{align}\label{eq:single-replacement-cost}
\Delta_{p,q}C_L:=F(1)-F(0).
\end{align}
We next bound its fourth moment. This will control
the fourth moments of the martingale differences
and imply the Lindeberg condition.

\begin{lemma}\label{lem:replacement-fourth-moment}
For fixed distinct locations $p,q$,
\begin{align}\label{eq:replacement-fourth-moment}
\E|\Delta_{p,q}C_L|^4\le C_d.
\end{align}
The bound is uniform in $L,p,q$.
\end{lemma}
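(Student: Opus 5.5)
We follow the structure of Lemma~\ref{lem:replacement-response-bound}, now in $L^4$ instead of $L^2$. Write
\begin{align*}
\Delta_{p,q}C_L
=w_q(1)-w_p(0)
-\int_0^1\bigl[s\,w_q'(s)+(1-s)w_p'(s)\bigr]\dd s,
\end{align*}
which is justified exactly as in that proof. Then Minkowski's inequality in $L^4(\mathbb P)$ gives
\begin{align*}
\|\Delta_{p,q}C_L\|_{L^4}
\le\|w_q(1)\|_{L^4}+\|w_p(0)\|_{L^4}
+\int_0^1\Bigl\|\Bigl(\sum_{r}m_r(s)|w_r'(s)|^2\Bigr)^{1/2}\Bigr\|_{L^4}\dd s.
\end{align*}
Here we reuse the pointwise Jensen bound $|s\,w_q'+(1-s)w_p'|^2\le\sum_r m_r|w_r'|^2$. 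By Lemma~\ref{lem:walk-response}, the integrand equals $\|\,\|\psi_s\|_{L^2}\|_{L^4(\mathbb P)}$. Lemma~\ref{lem:potential-response-moments} with exponent $4$ bounds this by $C_d(s^{-1/2}+(1-s)^{-1/2})$, which is integrable on $[0,1]$. That lemma is uniform in $L$ and in the fixed transfer locations $p,q$, so the integral term is at most $C_d$, uniformly.

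The main work is the endpoint moments $\E|w_p(0)|^4$ and $\E|w_q(1)|^4$ for fixed $p$ (respectively $q$), in the configuration with only unit masses. At $s=0$ the cell of $p$ has mass one, so
\begin{align*}
w_p(0)=\int_{C_p}\bigl(\phi_L(y)+\rho_L(y,p)^2\bigr)\dd y .
\end{align*}
By Jensen's inequality,
\begin{align*}
|w_p(0)|^4\le 8\int_{C_p}|\phi_L|^4\dd y+8R_p^8.
\end{align*}
The radius term is controlled by the conditional radius moments \eqref{eq:conditional-radius-moments}, uniformly in the fixed location.

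For the potential term we cannot directly use exchangeability, since $p$ is fixed. Two routes are available.
\begin{itemize}
\item One route bounds $\int_{C_p}|\phi_L|^4\le \sup_{B_{R_p}(p)}|\phi_L|^4$. Then $|\phi_L(y)-\phi_L(p)|\le 2R_pR_p'$, using the local Lipschitz bound $|\nabla\phi_L|\le 2\times$(cell radius) as in the proof of Lemma~\ref{lem:walk-exit}, with $R_p'$ the maximal radius of cells meeting $B_{R_p}(p)$. Its moments follow from Lemma~\ref{lem:good-event-probability}.
\item The other route averages over the location of $p$, using translation invariance of the law once $p$ is also random. The point $p$ is fixed, but by translation invariance of the torus the law of $(\phi_L,C_p)$ shifted by $p$ does not depend on $p$, so the fixed-$p$ moment equals the random-$p$ moment.
\end{itemize}
Either way, it suffices to show $\E|\phi_L(0)|^4\le C_d$, uniformly in $L$.

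This fourth-moment bound on the potential is the main obstacle. The second moment came from Efron--Stein in \eqref{eq:potential-point-moment}. For the fourth moment, the plan is a higher-moment Efron--Stein (Burkholder/Rosenthal-type) inequality. Expose the points one at a time, so that $\phi_L(0)$ is a sum of martingale differences $\E[\Delta_i\phi_L(0)\mid\mathcal F_i]$. Burkholder's inequality then gives
\begin{align*}
\E|\phi_L(0)|^4\le C\,\E\Bigl(\sum_i|\Delta_i\phi_L(0)|^2\Bigr)^2 .
\end{align*}
Expanding the square, the diagonal part is $\sum_i\E|\Delta_i\phi_L(0)|^4$. By translation invariance this equals
\begin{align*}
n^{-1}\E\|\Delta\phi_L\|_{L^4}^4\le \E\bigl\|\textstyle\int_0^1\|\psi_s\|\dd s\bigr\|^4,
\end{align*}
which is controlled once an $L^4$-in-space version of $\psi_s$ is available.

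The off-diagonal part needs the decay of $\Delta_i\phi_L(0)$ in the distance from $0$ to $X_i$. Here we use \eqref{eq:off-diagonal-occupation}: via Lemma~\ref{lem:walk-response}, $\psi_s$ at a cell near $0$ is an occupation integral of $\overline p$, so
\begin{align*}
\|\psi_s(0)\|_{L^4}\lesssim (s^{-1/2}+(1-s)^{-1/2})(1+\rho_L(0,p))^{-\gamma}.
\end{align*}
Since $2\gamma>d$, this is square-integrable in the location, so the off-diagonal sum $\bigl(\sum_i\|\Delta_i\phi_L(0)\|_{L^4}^2\bigr)^2$ is bounded by $C_d$. Combining these bounds gives $\E|\phi_L(0)|^4\le C_d$, hence $\E|w_p(0)|^4,\E|w_q(1)|^4\le C_d$. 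Together with the integral term, this proves \eqref{eq:replacement-fourth-moment}.
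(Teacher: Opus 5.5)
You handle the integral term correctly. The representation of $\Delta_{p,q}C_L$ through $w_q(1)$, $w_p(0)$ and $\int_0^1[s\,w_q'(s)+(1-s)w_p'(s)]\dd s$, Minkowski in $L^4(\mathbb P)$, and Lemma~\ref{lem:potential-response-moments} with exponent $4$ give a uniform bound. Exchangeability and translation invariance also do reduce the endpoint terms to $\E|\phi_L(0)|^4\le C_d$.

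The gap is that this estimate, which you rightly call the main obstacle, is proved nowhere in the paper (only the second moment \eqref{eq:potential-point-moment} is), and your sketch does not establish it.
\begin{itemize}
\item Your decay bound for $\psi_s(0)$ does not follow from Lemma~\ref{lem:off-diagonal-occupation}. That lemma controls $\int_0^\infty|\overline p_t(o,u)|\dd t$ only for \emph{fixed} sites $o,u$. Here $\psi_s(0)=w_r'(s)$, where $r$ is the random site whose cell contains the spatial point $0$. You would have to decompose over the owner of $0$, condition on it, and localize with radius tails. In the cross terms $\E[D_i^2D_j^2]$, with $D_i=\E[\Delta_i\phi_L(0)\mid\mathcal F_i]$, you would also need $X_j$ fixed, i.e.\ two fixed background sites.
\item If $0$ lies in a transfer cell, $w_{X_i}'(s)$ contains $\int_0^\infty\overline p_t(X_i,X_i)\dd t$. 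Corollary~\ref{cor:return-all-times} bounds this only by $C/m_{X_i}=C/(1-s)$. The weight $m_{X_i}$ that tames this term in $\|\psi_s\|_{L^2}^2$ is absent pointwise, so Minkowski in $s$ does not close.
\item As written, with unconditional norms, exchangeability and translation invariance give $(\sum_i\|\Delta_i\phi_L(0)\|_{L^4}^2)^2=n\,\E\|\Delta\phi_L\|_{L^4(\T_L^d)}^4$. This is of order $n$, not $O(1)$. The cross terms must be bounded conditionally on the replaced locations, and that is where the second fixed site enters.
\end{itemize}
Two smaller points: the diagonal sum is $\E\|\Delta\phi_L\|_{L^4}^4$, without your factor $n^{-1}$. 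The spatial $L^4$ bound you defer does hold: $\sum_rm_r|w_r'|^4\le(\min_rm_r)^{-1}(\sum_rm_r|w_r'|^2)^2$, which gives an integrable $\min(s,1-s)^{-3/4}$.

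The missing idea is that the endpoint weights are not needed at all. Since $\E\Delta_{p,q}C_L=0$ by translation invariance, the paper applies the fourth-moment Efron--Stein inequality directly to $\Delta_{p,q}C_L$, viewed as a function of the background sites. The increments are then exactly the mixed costs $J_i=\Delta_{p,q}\Delta_{z_i,z_i'}C_L$. Lemma~\ref{lem:mixed-replacement-decay} gives $\E[|J_i|^4\mid z_i,z_i']\le C_dK_i^4$, with $\E K_i^k\le C_d/n$ for $k=2,4$. That lemma uses Lemma~\ref{lem:off-diagonal-occupation} only between fixed transfer sites, through \eqref{eq:mixed-response-kernel}, where the prefactor $(m_a(s)m_b(t))^{-1/2}$ is integrable. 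The diagonal sum is then $O(1)$. Conditional Cauchy--Schwarz given $z_i,z_i',z_j,z_j'$, with only one extra fixed background site in each factor, makes the cross sum $O(1)$. The spatial decay you tried to build for $\psi_s(0)$ is already available there, in the form the argument needs.
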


\begin{proof}
Translation invariance implies
\begin{align*}
\E[\Delta_{p,q}C_L]=0.
\end{align*}
Let $z_1',\ldots,z_{n-1}'$ be independent uniform
copies of the background sites, independent of
$\mathcal B$, and put
\begin{align*}
J_i:=\Delta_{p,q}\Delta_{z_i,z_i'}C_L.
\end{align*}
Thus $J_i$ is the change in $\Delta_{p,q}C_L$
when $z_i$ is replaced by $z_i'$.
Let $\mathcal G=\sigma(z_1,\ldots,z_{n-1})$.
The fourth-moment Efron--Stein inequality
\cite[Theorem~2]{BBLM05}, followed by conditional
Jensen's inequality and the tower property, gives
\begin{align*}
\E|\Delta_{p,q}C_L|^4
&\le C\E\left[
\left(
\E\left[
\sum_{i=1}^{n-1}|J_i|^2
\,\middle|\,\mathcal G
\right]
\right)^2
\right]\\
&\le C\E\left[
\E\left[
\left(\sum_{i=1}^{n-1}|J_i|^2\right)^2
\,\middle|\,\mathcal G
\right]
\right]\\
&=C\E\left(\sum_{i=1}^{n-1}|J_i|^2\right)^2\\
&=C\sum_{i=1}^{n-1}\E|J_i|^4
+C\sum_{i\ne j}\E[|J_i|^2|J_j|^2].
\end{align*}

For each $i$, write
\begin{align*}
K_i:=
\sum_{\substack{a\in\{p,q\}\\b\in\{z_i,z_i'\}}}
(1+\rho_L(a,b))^{-\gamma}.
\end{align*}
Since $2\gamma>d$, integration over the independent
uniform locations $z_i,z_i'$ gives, for $k=2,4$,
\begin{align*}
\E K_i^k
&\le \frac{C_{d,k}}n
\sum_{a\in\{p,q\}}
\int_{\T_L^d}(1+\rho_L(a,x))^{-k\gamma}\dd x\\
&\le\frac{C_d}{n}.
\end{align*}
By Lemma~\ref{lem:mixed-replacement-decay},
\begin{align*}
\E[|J_i|^4\mid z_i,z_i']
\le C_dK_i^4,
\qquad
\E|J_i|^4\le\frac{C_d}{n}.
\end{align*}

For $i\ne j$, condition on
$z_i,z_i',z_j,z_j'$.
In $J_i$, the site $z_j$ is one fixed background
site, while $z_j'$ is unused. The conditional
statement of Lemma~\ref{lem:mixed-replacement-decay}
therefore gives
\begin{align*}
\E[|J_i|^4\mid z_i,z_i',z_j,z_j']
&\le C_dK_i^4,\\
\E[|J_j|^4\mid z_i,z_i',z_j,z_j']
&\le C_dK_j^4.
\end{align*}
Conditional Cauchy--Schwarz yields
\begin{align*}
&\E[|J_i|^2|J_j|^2\mid z_i,z_i',z_j,z_j']\\
&\qquad\le
\left(
\E[|J_i|^4\mid z_i,z_i',z_j,z_j']
\E[|J_j|^4\mid z_i,z_i',z_j,z_j']
\right)^{1/2}\\
&\qquad\le C_dK_i^2K_j^2.
\end{align*}
The pairs $(z_i,z_i')$ and $(z_j,z_j')$ are
independent, so
\begin{align*}
\E[|J_i|^2|J_j|^2]
\le C_d(\E K_i^2)(\E K_j^2)
\le\frac{C_d}{n^2}.
\end{align*}
Substituting these estimates gives
\begin{align*}
\E|\Delta_{p,q}C_L|^4
\le C_d\left(
\frac{n-1}{n}
+\frac{(n-1)(n-2)}{n^2}
\right)
\le C_d.
\end{align*}
\end{proof}

We also need the fourth-moment bound when one
background location is fixed.

\begin{corollary}\label{cor:conditional-replacement-fourth-moment}
For fixed distinct locations $p,q,a$,
\begin{align}\label{eq:conditional-replacement-fourth-moment}
\E\left[
|\Delta_{p,q}C_L|^4
\,\middle|\,z_1=a
\right]
\le C_d.
\end{align}
The bound is uniform in $L,p,q,a$.
\end{corollary}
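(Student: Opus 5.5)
We follow the proof of Lemma~\ref{lem:replacement-fourth-moment}, now conditioning on $z_1=a$ and treating $z_1$ as a fixed background site. The problem is that $\E[\Delta_{p,q}C_L\mid z_1=a]$ need not vanish, since fixing one site breaks translation invariance. So we first bound this conditional mean and then control the conditional fluctuation about it.

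\emph{Fluctuation.} Condition on $z_1=a$ and apply the fourth-moment Efron--Stein inequality \cite[Theorem~2]{BBLM05} to the function of the remaining independent sites $z_2,\ldots,z_{n-1}$. This gives
\begin{align*}
\E\left[|\Delta_{p,q}C_L-\E[\Delta_{p,q}C_L\mid z_1=a]|^4\,\middle|\,z_1=a\right]
\le C\sum_{i\ge2}\E|J_i|^4+C\sum_{i\ne j,\ i,j\ge2}\E\bigl[|J_i|^2|J_j|^2\bigr],
\end{align*}
where all expectations are conditional on $z_1=a$. Lemma~\ref{lem:mixed-replacement-decay} remains valid with one background location fixed. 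In the cross terms, however, after conditioning on $z_i,z_i',z_j,z_j'$ there are two fixed background sites, $a$ and $z_j$. I would handle this by reading Lemma~\ref{lem:good-event-probability} and Lemma~\ref{lem:conditional-radius-moments} as allowing a bounded number of prescribed unit atoms; the proof of Lemma~\ref{lem:good-event-probability} already remarks that its constants absorb extra prescribed atoms. Fixing one more site only changes constants, so the estimates $\E[|J_i|^4\mid\cdot]\le C_dK_i^4$ go through unchanged. The integrals over $z_i,z_i'$ are as before, so the total is again at most $C_d$.

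\emph{Conditional mean.} Write $g(a):=\E[\Delta_{p,q}C_L\mid z_1=a]$. Averaging over $a$ gives $\int g(a)\,\dd a/n=\E\Delta_{p,q}C_L=0$. For two values $a,a'$, we couple the configurations with the same remaining sites. Then
\begin{align*}
g(a)-g(a')=\E\bigl[\Delta_{p,q}\Delta_{a,a'}C_L\bigr],
\end{align*}
where the expectation is over the other sites. By Lemma~\ref{lem:mixed-replacement-decay} with $p=1$,
\begin{align*}
|g(a)-g(a')|\le C_d\sum_{x\in\{p,q\},\,y\in\{a,a'\}}(1+\rho_L(x,y))^{-\gamma}\le 4C_d.
\end{align*}
Now average over $a'$ uniform and use $\int g\,\dd a'/n=0$. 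This gives $|g(a)|\le C_d$ uniformly in $a$.

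Combining the two parts via $(x+y)^4\le 8x^4+8y^4$ yields \eqref{eq:conditional-replacement-fourth-moment}. The main obstacle is the bookkeeping in the cross terms: we must check that the radius-moment and good-event estimates tolerate two fixed background atoms rather than one, uniformly in their locations. If that extension were unavailable, I would instead apply Cauchy--Schwarz, $\E[|J_i|^2|J_j|^2]\le(\E|J_i|^4\,\E|J_j|^4)^{1/2}$. This loses the product structure, giving a bound $C_d/n$ per pair, which is too weak. So the extension of the conditional radius moments, which the paper indicates is available, is the step to secure first.
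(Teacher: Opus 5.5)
Your argument is correct, but it takes a longer route than the paper. The paper never separates off the conditional mean. It keeps $z_1=a$, draws an independent uniform $b$, and writes $\Delta_{p,q}C_L=\Delta'_{p,q}C_L-\Delta_{p,q}\Delta_{a,b}C_L$, where $\Delta'_{p,q}C_L$ is the replacement cost after $a$ is replaced by $b$. Minkowski's inequality in $L^4$ then finishes the proof. In $\Delta'_{p,q}C_L$ all background sites are independent and uniform, so Lemma~\ref{lem:replacement-fourth-moment} applies. The mixed term is bounded by Lemma~\ref{lem:mixed-replacement-decay}, with $p,q,a,b$ as the four transfer sites, so no background location needs to be fixed at all.

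Your conditional-mean step is exactly this coupling, used only at the level of first moments. Its sign is actually $g(a)-g(a')=-\E[\Delta_{p,q}\Delta_{a,a'}C_L]$, which is harmless since you bound absolute values. Running the same coupling in $L^4$ instead of $L^1$ would have made the whole fluctuation part unnecessary.

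That fluctuation part does go through. Lemma~\ref{lem:conditional-radius-moments} already allows any bounded number of prescribed unit atoms. In the proof of Lemma~\ref{lem:good-event-probability}, each additional fixed site only adds another $C_{d,M}\ell^{-M}$. So the chain through Lemmas~\ref{lem:return-before-cutoff}, \ref{lem:off-diagonal-occupation} and \ref{lem:mixed-replacement-decay} holds with two fixed background sites, with changed constants. The cost is that you must re-verify that chain, whereas the paper's two-line coupling uses the lemmas exactly as stated. Your decomposition into a bounded conditional mean plus an Efron--Stein fluctuation is a more explicit picture, but it yields nothing stronger.
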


\begin{proof}
Keep $z_1=a$ fixed and let $b$ be an independent
uniform site. Let $\Delta'_{p,q}C_L$ denote the
replacement cost after replacing the background
site $a$ by $b$. By the definition of the mixed
replacement cost,
\begin{align*}
\Delta_{p,q}C_L
=
\Delta'_{p,q}C_L
-\Delta_{p,q}\Delta_{a,b}C_L.
\end{align*}
The background sites for $\Delta'_{p,q}C_L$ are
all independent and uniform. Hence
Lemma~\ref{lem:replacement-fourth-moment} gives
\begin{align*}
\E|\Delta'_{p,q}C_L|^4\le C_d.
\end{align*}
Conditional on $b$,
Lemma~\ref{lem:mixed-replacement-decay} gives
\begin{align*}
\left(
\E\left[
|\Delta_{p,q}\Delta_{a,b}C_L|^4
\,\middle|\,b
\right]
\right)^{1/4}
&\le C_d
\sum_{\substack{u\in\{p,q\}\\v\in\{a,b\}}}
(1+\rho_L(u,v))^{-\gamma}\\
&\le C_d.
\end{align*}
Therefore, we have
\begin{align*}
\left(
\E\left[
|\Delta_{p,q}C_L|^4
\,\middle|\,z_1=a
\right]
\right)^{1/4}
&=
\left(
\E\left|
\Delta'_{p,q}C_L
-\Delta_{p,q}\Delta_{a,b}C_L
\right|^4
\right)^{1/4}\\
&\le
\left(\E|\Delta'_{p,q}C_L|^4\right)^{1/4}
+
\left(
\E|\Delta_{p,q}\Delta_{a,b}C_L|^4
\right)^{1/4}\\
&\le C_d.
\end{align*}
Here the expectations on the right are over
$b,z_2,\ldots,z_{n-1}$, with $a,p,q$ fixed.
\end{proof}

\section{Central limit theorem}
\label{sec:clt}

\subsection{Martingale differences}
\label{sec:martingale-differences}

Recall that $X_1,\ldots,X_n$ are independent uniform
sites on $\T_L^d$. Put
\begin{align*}
\mathcal F_0:=\{\varnothing,\Omega\},
\qquad
\mathcal F_i:=\sigma(X_1,\ldots,X_i),
\qquad 1\le i\le n.
\end{align*}
Define the martingale differences by
\begin{align}\label{eq:martingale-differences}
\xi_i
:=\E[C_L\mid\mathcal F_i]
-\E[C_L\mid\mathcal F_{i-1}].
\end{align}
Then
\begin{align*}
C_L-\E C_L=\sum_{i=1}^n\xi_i,
\qquad
\E[\xi_i\mid\mathcal F_{i-1}]=0.
\end{align*}

Let $X_i'$ be an independent uniform site, independent
of the original configuration, and let $C_L^{(i)}$
be the cost after replacing $X_i$ by $X_i'$.
Since
\begin{align*}
\E[C_L^{(i)}\mid\mathcal F_i]
=\E[C_L\mid\mathcal F_{i-1}],
\end{align*}
we have
\begin{align}\label{eq:martingale-replacement}
\xi_i
=\E[C_L-C_L^{(i)}\mid\mathcal F_i]
=-\E[\Delta_{X_i,X_i'}C_L\mid\mathcal F_i].
\end{align}
Conditional Jensen's inequality and
Lemma~\ref{lem:replacement-fourth-moment} give
\begin{align}\label{eq:martingale-fourth-moment}
\E|\xi_i|^4
&\le
\E\left[
\E\left[
|C_L-C_L^{(i)}|^4
\,\middle|\,\mathcal F_i
\right]
\right]\notag\\
&=\E|\Delta_{X_i,X_i'}C_L|^4
\le C_d.
\end{align}

Write
\begin{align}\label{eq:conditional-variance-sum}
Q_L:=\sum_{i=1}^n
\E[\xi_i^2\mid\mathcal F_{i-1}],
\qquad
\sigma_L^2:=\Var(C_L).
\end{align}
The martingale differences are orthogonal, so
\begin{align*}
\E Q_L
=\sum_{i=1}^n\E\xi_i^2
=\sigma_L^2.
\end{align*}
We next show that $Q_L$ concentrates around its
expectation. Together with a lower bound
$\sigma_L^2\ge c_dn$, this will give the conditional
variance convergence required for the martingale
central limit theorem.

\subsection{Concentration of the conditional variance}
\label{sec:conditional-variance}

\begin{lemma}\label{lem:conditional-variance}
We have
\begin{align}\label{eq:conditional-variance}
\Var(Q_L)
\le C_dn^2L^{d-2\gamma}
=C_dn^2L^{-(d-4)/4}.
\end{align}
\end{lemma}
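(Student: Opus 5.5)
The plan is to apply the Efron--Stein inequality to $Q_L$, viewed as a function of the independent sites $X_1,\ldots,X_n$, and to bound each resampling increment using the mixed replacement estimate, Lemma~\ref{lem:mixed-replacement-decay}. Write $g_i:=\E[\xi_i^2\mid\mathcal F_{i-1}]$, which is a function of $X_1,\ldots,X_{i-1}$ only. Fix $j$, let $X_j''$ be an independent uniform copy, and let a superscript $(j)$ denote the effect of replacing $X_j$ by $X_j''$. Only the terms with $i>j$ change. Efron--Stein then gives
\begin{align*}
\Var(Q_L)\le\frac12\sum_{j=1}^n\E\Bigl(\sum_{i>j}\bigl|g_i-g_i^{(j)}\bigr|\Bigr)^2 .
\end{align*}

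For $i>j$, represent $\xi_i$ through \eqref{eq:martingale-replacement} with an auxiliary copy $X_i'$. Then $\xi_i-\xi_i^{(j)}=\E[D_{ij}\mid\cdot\,]$, where $D_{ij}:=\Delta_{X_i,X_i'}\Delta_{X_j,X_j''}C_L$ is a mixed replacement cost as in \eqref{eq:mixed-replacement-cost}. Factoring the difference of squares and using conditional Jensen, I get $|g_i-g_i^{(j)}|\le A_i$, where $A_i$ is the conditional expectation, given $X_1,\ldots,X_{i-1},X_j''$, of $|D_{ij}|\,(|\xi_i|+|\xi_i^{(j)}|)$. Set
\begin{align*}
K_{ij}:=\sum_{a\in\{X_i,X_i'\}}\ \sum_{b\in\{X_j,X_j''\}}(1+\rho_L(a,b))^{-\gamma}.
\end{align*}
Expanding the square gives $\E(\sum_{i>j} A_i)^2=\sum_{i,i'>j}\E[A_iA_{i'}]$, and I would bound these terms as follows.
\begin{itemize}
\item Off-diagonal terms $i\ne i'$: I condition on $X_j,X_j'',X_i,X_i',X_{i'},X_{i'}'$. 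The conditional version of Lemma~\ref{lem:mixed-replacement-decay} (one background site fixed, here $X_{i'}$ inside $D_{ij}$) gives $\|D_{ij}\|_{L^4}\le C_dK_{ij}$ and $\|D_{i'j}\|_{L^4}\le C_dK_{i'j}$. The fourth moments of the $\xi$ factors are bounded via \eqref{eq:martingale-fourth-moment} and Corollary~\ref{cor:conditional-replacement-fourth-moment}. Hölder then yields $\E[A_iA_{i'}]\le C_d\E[K_{ij}K_{i'j}]$. Given $X_j,X_j''$, the factors $K_{ij}$ and $K_{i'j}$ are independent, and since $\gamma<d$,
\begin{align*}
\E[K_{ij}\mid X_j,X_j'']\le\frac{C_d}{n}\int_{\T_L^d}(1+\rho_L(0,x))^{-\gamma}\dd x\le\frac{C_d}{n}L^{d-\gamma}=C_dL^{-\gamma}.
\end{align*}
Hence $\E[A_iA_{i'}]\le C_dL^{-2\gamma}$.
\item Diagonal terms $i=i'$: the same Hölder argument gives $\E A_i^2\le C_d\E K_{ij}^2\le C_d/n$, using $2\gamma>d$ as in the proof of Lemma~\ref{lem:replacement-fourth-moment}.
\end{itemize}

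Summing, for each $j$,
\begin{align*}
\E\Bigl(\sum_{i>j}A_i\Bigr)^2\le C_d\bigl(n^2L^{-2\gamma}+1\bigr),
\end{align*}
and summing over $j$ gives $\Var(Q_L)\le C_d(n^3L^{-2\gamma}+n)=C_dn^2L^{d-2\gamma}$. The last equality uses $n=L^d$, and the term $n$ is absorbed because $n\le n^2L^{d-2\gamma}$ exactly when $L^{2\gamma}\le L^{2d}$. Finally, $d-2\gamma=-(d-4)/4$.

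The main obstacle is the Hölder step for the off-diagonal terms. The products $A_iA_{i'}$ involve conditional expectations over the future sites, taken with respect to different $\sigma$-fields $\mathcal F_{i-1}$ and $\mathcal F_{i'-1}$. To extract the factor $K_{ij}K_{i'j}$, I have to pull the $L^4$ bounds for $D_{ij}$ inside while keeping $X_{i'}$ fixed, and also keep track of the resampled copy $X_j''$ and the auxiliary copies $X_i',X_{i'}'$. I would first verify that every $D_{ij}$ appearing has at most one extra fixed background site beyond its four transfer sites, so that the conditional clause of Lemma~\ref{lem:mixed-replacement-decay} applies. If that bookkeeping fails, I would fall back on conditioning on the full sextuple and applying conditional Cauchy--Schwarz, as in the proof of Lemma~\ref{lem:replacement-fourth-moment}.
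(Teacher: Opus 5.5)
Your overall route is the paper's: Efron--Stein for $Q_L$ in the sites (only terms with $i>j$ change), a difference-of-squares factorization, Lemma~\ref{lem:mixed-replacement-decay} for the difference factor and Corollary~\ref{cor:conditional-replacement-fourth-moment} for the sum factor, and integration of the kernel against $\mu_L$ to produce $L^{-\gamma}$. This ends at $C_dn^3L^{-2\gamma}=C_dn^2L^{d-2\gamma}$.

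The off-diagonal step you flag, however, does not work as written. $A_i$ is a conditional expectation given $\mathcal F_{i-1}$ and $X_j''$, so it has already averaged out $X_i,X_i'$ and the future sites. Conditioning on $X_i,X_i'$ therefore does not change $A_i$, and a bound of the form $C_dK_{ij}$ cannot hold for it pointwise in $(X_i,X_i')$. What H\"older after conditioning on the sextuple actually controls is $\E[Y_iY_{i'}]$ for the integrands $Y_i:=|D_{ij}|(|\xi_i|+|\xi_i^{(j)}|)$. That is not $\E[A_iA_{i'}]$: for $i<i'$ one has $\E[A_iA_{i'}]=\E[A_iY_{i'}]$, which is in general not comparable to $\E[Y_iY_{i'}]$. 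Your fallback via Lemma~\ref{lem:replacement-fourth-moment} has the same defect, since there the $J_i$ are not conditional expectations.

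The fix, which is what the paper does, is to apply Minkowski over the averaged-out locations before H\"older. Since $(X_i,X_i')$ is independent of $\sigma(\mathcal F_{i-1},X_j'')$ and of the future sites,
\begin{align*}
A_i=\int_{\T_L^d}\int_{\T_L^d}\E\bigl[Y_i\,\big|\,\mathcal F_{i-1},X_j'',X_i=x,X_i'=q\bigr]\,\mu_L(\dd x)\,\mu_L(\dd q).
\end{align*}
Conditional on $X_j=a$, $X_j''=b$, conditional Jensen and Cauchy--Schwarz then give
\begin{align*}
\|A_i\|_{L^2}
&\le\iint\|D_{ij}\|_{L^4\mid x,q,a,b}\,
\bigl\||\xi_i|+|\xi_i^{(j)}|\bigr\|_{L^4\mid x,a,b}\,
\mu_L(\dd x)\,\mu_L(\dd q)\\
&\le C_d\iint K_{ij}\,\mu_L(\dd x)\,\mu_L(\dd q)
\le C_dL^{-\gamma},
\end{align*}
uniformly in $a,b$. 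This uses Lemma~\ref{lem:mixed-replacement-decay} with no extra fixed background site. Consequently $\E(\sum_{i>j}A_i)^2\le(\sum_{i>j}\|A_i\|_{L^2})^2\le C_d(n-j)^2L^{-2\gamma}$.

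With this bound, the diagonal/off-diagonal split and your $C_d/n$ diagonal estimate (correct but unnecessary) can be dropped. This is exactly the paper's argument: it bounds the $L^2$ norm of each increment $g_i-g_i^{(j)}$ (in your notation) by $C_dL^{-\gamma}$ uniformly and then sums over $i$ with Cauchy--Schwarz.
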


\begin{proof}
Write
\begin{align*}
k_L(x,y):=(1+\rho_L(x,y))^{-\gamma}.
\end{align*}
Since $\gamma<d$ and $\mu_L(\dd x)=\dd x/n$,
\begin{align}\label{eq:averaged-decay-kernel}
\int_{\T_L^d}k_L(x,a)\mu_L(\dd x)
&\le\frac{C_d}{L^d}
\int_0^{\sqrt dL/2}
r^{d-1}(1+r)^{-\gamma}\dd r\notag\\
&\le C_dL^{-\gamma},
\end{align}
uniformly in $a$.

For each $i$, regard the martingale difference as
a function of the newly revealed location:
\begin{align*}
g_i(x)
:=\E\left[
C_L-C_L^{(i)}
\,\middle|\,
X_1,\ldots,X_{i-1},X_i=x
\right].
\end{align*}
For fixed $x$, this is a random variable depending
only on $X_1,\ldots,X_{i-1}$. Since $X_i$ is independent
of these sites and has law $\mu_L$,
\begin{align}\label{eq:conditional-variance-integral}
\xi_i=g_i(X_i),
\qquad
B_i:=\E[\xi_i^2\mid\mathcal F_{i-1}]
=\int_{\T_L^d}g_i(x)^2\mu_L(\dd x).
\end{align}
In particular, $Q_L=\sum_{i=1}^n B_i$.

Fix $j<i$, and replace $X_j$ by an independent
uniform site $X_j'$. Use the superscript $(j)$
for the resulting functions $g_i^{(j)}$ and
variables $B_i^{(j)}$.
Condition on $X_j=a$ and $X_j'=b$.
For each fixed $x$, the remaining randomness in
$g_i(x)$ and $g_i^{(j)}(x)$ comes from the past
sites $(X_r)_{r<i,\ r\ne j}$.

Conditional Jensen's inequality and
Corollary~\ref{cor:conditional-replacement-fourth-moment}
give
\begin{align*}
\E\left[
|g_i(x)|^4
\,\middle|\,X_j=a,\ X_j'=b
\right]
&\le
\int_{\T_L^d}
\E\left[
|\Delta_{x,q}C_L|^4
\,\middle|\,X_j=a
\right]\mu_L(\dd q)
\le C_d,\\
\E\left[
|g_i^{(j)}(x)|^4
\,\middle|\,X_j=a,\ X_j'=b
\right]
&\le
\int_{\T_L^d}
\E\left[
|\Delta_{x,q}C_L|^4
\,\middle|\,X_j=b
\right]\mu_L(\dd q)
\le C_d.
\end{align*}
In each integral, $x,q$ are the transfer locations
and $a$ or $b$ is one fixed background site.
Consequently,
\begin{align*}
\E\left[
|g_i(x)+g_i^{(j)}(x)|^4
\,\middle|\,X_j=a,\ X_j'=b
\right]
\le C_d.
\end{align*}

Using the same future sites and replacement location
$q=X_i'$ in both conditional expectations gives
\begin{align*}
g_i(x)-g_i^{(j)}(x)
=
\int_{\T_L^d}
\E\left[
\Delta_{x,q}\Delta_{a,b}C_L
\,\middle|\,(X_r)_{r<i,\ r\ne j}
\right]\mu_L(\dd q).
\end{align*}
For each fixed $q$, the inner expectation averages
over the future sites $X_{i+1},\ldots,X_n$, with
the transfer locations $x,q,a,b$ and the remaining
past sites fixed.

Minkowski's inequality and conditional Jensen's
inequality give
\begin{align*}
&\left(
\E\left[
|g_i(x)-g_i^{(j)}(x)|^4
\,\middle|\,X_j=a,\ X_j'=b
\right]
\right)^{1/4}\\
&\quad\le
\int_{\T_L^d}
\left(
\E\left|
\E\left[
\Delta_{x,q}\Delta_{a,b}C_L
\,\middle|\,(X_r)_{r<i,\ r\ne j}
\right]
\right|^4
\right)^{1/4}\mu_L(\dd q)\\
&\quad\le
\int_{\T_L^d}
\left(
\E|\Delta_{x,q}\Delta_{a,b}C_L|^4
\right)^{1/4}\mu_L(\dd q)\\
&\quad\le
C_d\int_{\T_L^d}
\bigl[
k_L(x,a)+k_L(x,b)
+k_L(q,a)+k_L(q,b)
\bigr]\mu_L(\dd q)\\
&\quad\le
C_d\bigl[
k_L(x,a)+k_L(x,b)+L^{-\gamma}
\bigr].
\end{align*}
By \eqref{eq:conditional-variance-integral},
\begin{align*}
B_i-B_i^{(j)}
=
\int_{\T_L^d}
(g_i(x)-g_i^{(j)}(x))
(g_i(x)+g_i^{(j)}(x))
\mu_L(\dd x).
\end{align*}
Applying Minkowski's inequality and then
Cauchy--Schwarz under the conditional law gives
\begin{align*}
&\left(
\E\left[
|B_i-B_i^{(j)}|^2
\,\middle|\,X_j=a,\ X_j'=b
\right]
\right)^{1/2}\\
&\quad\le
\int_{\T_L^d}
\left(
\E\left[
|g_i(x)-g_i^{(j)}(x)|^2
|g_i(x)+g_i^{(j)}(x)|^2
\,\middle|\,X_j=a,\ X_j'=b
\right]
\right)^{1/2}
\mu_L(\dd x)\\
&\quad\le
\int_{\T_L^d}
\left(
\E\left[
|g_i(x)-g_i^{(j)}(x)|^4
\,\middle|\,X_j=a,\ X_j'=b
\right]
\right)^{1/4}\\
&\qquad\qquad\times
\left(
\E\left[
|g_i(x)+g_i^{(j)}(x)|^4
\,\middle|\,X_j=a,\ X_j'=b
\right]
\right)^{1/4}
\mu_L(\dd x)\\
&\quad\le
C_d\int_{\T_L^d}
\bigl[
k_L(x,a)+k_L(x,b)+L^{-\gamma}
\bigr]\mu_L(\dd x)\\
&\quad\le C_dL^{-\gamma}.
\end{align*}
The bound is uniform in $a,b$. Squaring and averaging
over $X_j,X_j'$ yields
\begin{align}\label{eq:conditional-variance-replacement}
\E|B_i-B_i^{(j)}|^2
\le C_dL^{-2\gamma},
\qquad j<i.
\end{align}

Let $Q_L^{(j)}$ be obtained from $Q_L$ by replacing
$X_j$ with $X_j'$. Since $B_i$ depends only on
$X_1,\ldots,X_{i-1}$, only terms with $i>j$
can change. Thus
\begin{align*}
Q_L-Q_L^{(j)}
=\sum_{i=j+1}^n(B_i-B_i^{(j)}).
\end{align*}
Cauchy--Schwarz and
\eqref{eq:conditional-variance-replacement} give
\begin{align*}
\E|Q_L-Q_L^{(j)}|^2
&\le(n-j)\sum_{i=j+1}^n
\E|B_i-B_i^{(j)}|^2\\
&\le C_d(n-j)^2L^{-2\gamma}.
\end{align*}
Finally, Efron--Stein gives
\begin{align*}
\Var(Q_L)
&\le\frac12\sum_{j=1}^n
\E|Q_L-Q_L^{(j)}|^2\\
&\le C_dL^{-2\gamma}
\sum_{j=1}^n(n-j)^2\\
&\le C_dn^3L^{-2\gamma}\\
&=C_dn^2L^{d-2\gamma}
=C_dn^2L^{-(d-4)/4}.
\end{align*}
\end{proof}

\subsection{Variance lower bound by Hoeffding decomposition}
\label{sec:variance-lower-bound}

The mixed-replacement estimate also lets us reduce
the variance lower bound to a lower bound for the
effect of a single replacement.

\begin{lemma}\label{lem:variance-lower-reduction}
There is a constant $c_d>0$ such that
\begin{align}\label{eq:variance-lower-reduction}
\Var(C_L)
\ge c_dn
\left(\E|C_L-C_L^{(1)}|^2\right)^2.
\end{align}
\end{lemma}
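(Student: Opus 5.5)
The plan is to compare the Hoeffding (orthogonal) decomposition of $C_L$ with the single and mixed replacement costs, and then close the argument with one Cauchy--Schwarz inequality. Write
\begin{align*}
C_L-\E C_L=\sum_{\varnothing\ne S\subseteq\{1,\ldots,n\}}h_S,
\qquad a_S:=\E h_S^2,
\end{align*}
with the $h_S$ orthogonal and degenerate, as recalled in Appendix~\ref{sec:hoeffding}. Then $\Var(C_L)=\sum_Sa_S$. The standard identities for replacement differences are
\begin{align*}
\E|C_L-C_L^{(i)}|^2=2\sum_{S\ni i}a_S,
\qquad
\E|\Delta_{X_i,X_i'}\Delta_{X_j,X_j'}C_L|^2=4\sum_{S\ni i,j}a_S,
\quad i\ne j.
\end{align*}
Replacing $X_i$ by $X_i'$ kills exactly the terms with $i\in S$ and replaces them by independent copies. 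The mixed difference keeps only the terms containing both $i$ and $j$, each appearing as an alternating sum of four orthogonal copies. Put $D:=\E|C_L-C_L^{(1)}|^2$. Summing over $i$ and over ordered pairs $i\ne j$, and using exchangeability, gives
\begin{align*}
\sum_S|S|\,a_S=\frac{nD}2,
\qquad
\sum_S|S|(|S|-1)\,a_S=\frac{n(n-1)}4\,\E|\Delta_{X_1,X_1'}\Delta_{X_2,X_2'}C_L|^2 .
\end{align*}

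Next I would bound the mixed term by $C_d/n$. Conditionally on the four locations, Lemma~\ref{lem:mixed-replacement-decay} with $p=2$ gives
\begin{align*}
\E\bigl[|\Delta_{X_1,X_1'}\Delta_{X_2,X_2'}C_L|^2\,\big|\,X_1,X_1',X_2,X_2'\bigr]\le C_dK^2,
\qquad
K:=\sum_{a\in\{X_1,X_1'\},\,b\in\{X_2,X_2'\}}(1+\rho_L(a,b))^{-\gamma}.
\end{align*}
The remaining background sites are independent and uniform, as the lemma requires. Integrating over the uniform locations and using $2\gamma>d$ gives $\E K^2\le C_d/n$, exactly as in the proof of Lemma~\ref{lem:replacement-fourth-moment}. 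Hence $\sum_S|S|(|S|-1)a_S\le C_dn$. Corollary~\ref{cor:variance-upper-bound} (its proof shows $D\le C_d$) then gives
\begin{align*}
\sum_S|S|^2a_S\le C_dn+\frac{nD}2\le C_dn.
\end{align*}

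The conclusion then follows from Cauchy--Schwarz:
\begin{align*}
\frac{n^2D^2}4=\Bigl(\sum_S|S|\,a_S\Bigr)^2
\le\Bigl(\sum_Sa_S\Bigr)\Bigl(\sum_S|S|^2a_S\Bigr)
\le\Var(C_L)\cdot C_dn,
\end{align*}
which is \eqref{eq:variance-lower-reduction}. The only step needing care is the two Hoeffding identities for single and mixed replacement differences; the normalization constants $2$ and $4$ only affect $c_d$. One must also check that the integrability of $|\Delta\Delta C_L|^2$ is supplied by Lemma~\ref{lem:mixed-replacement-decay}, so that the decomposition and the sum over pairs are legitimate. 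I expect no real obstacle beyond this bookkeeping; the substantive input is the $C_d/n$ bound on averaged mixed replacements, already available from Section~\ref{sec:mixed-replacements}.
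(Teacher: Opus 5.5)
Your proposal is correct and follows the paper's proof essentially step by step. Both arguments use the Hoeffding identities for single and mixed replacements, the $C_d/n$ bound on the averaged mixed replacement from Lemma~\ref{lem:mixed-replacement-decay} (using $2\gamma>d$), and Cauchy--Schwarz in the form $\bigl(\sum_S|S|a_S\bigr)^2\le\bigl(\sum_Sa_S\bigr)\bigl(\sum_S|S|^2a_S\bigr)$. The differences are cosmetic. You bound $D$ via the proof of Corollary~\ref{cor:variance-upper-bound}, whereas the paper uses Lemma~\ref{lem:replacement-fourth-moment}, and you write $|S|(|S|-1)$ where the paper writes $|S|^2-|S|$.
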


\begin{proof}
Let $C_L^{(i,j)}$ denote the cost after replacing
both $X_i$ and $X_j$ by independent uniform copies.
Apply the Hoeffding decomposition from
Appendix~\ref{sec:hoeffding}:
\begin{align*}
C_L-\E C_L=\sum_{S\ne\varnothing}U_S,
\qquad
v_S:=\E U_S^2.
\end{align*}
By orthogonality,
\begin{align*}
\Var(C_L)=\sum_{S\ne\varnothing}v_S.
\end{align*}

Let $U_S^{(i)}$ denote the component after replacing
$X_i$ by $X_i'$. If $i\notin S$, then
$U_S^{(i)}=U_S$. If $i\in S$, the centering property
\eqref{eq:hoeffding-centering} gives
\begin{align*}
\E[U_SU_S^{(i)}]=0,
\qquad
\E|U_S-U_S^{(i)}|^2=2v_S.
\end{align*}
Hence, by \eqref{eq:hoeffding-variance}, we have
\begin{align*}
\E|C_L-C_L^{(i)}|^2
&=\E\left|
\sum_{S:\,i\in S}(U_S-U_S^{(i)})
\right|^2\\
&=\sum_{S:\,i\in S}
\E|U_S-U_S^{(i)}|^2\\
&=2\sum_{S:\,i\in S}v_S.
\end{align*}

Similarly, the mixed difference of $U_S$ vanishes
unless $i,j\in S$. When both indices belong to $S$,
the four terms
$U_S,U_S^{(i)},U_S^{(j)},U_S^{(i,j)}$
are pairwise uncorrelated and each has second
moment $v_S$. Orthogonality across distinct sets
therefore gives
\begin{align*}
\E|C_L-C_L^{(i)}-C_L^{(j)}+C_L^{(i,j)}|^2
=4\sum_{S:\,i,j\in S}v_S.
\end{align*}
Summing over indices, we obtain
\begin{align*}
\sum_{S\ne\varnothing}|S|v_S
&=\frac12\sum_{i=1}^n
\E|C_L-C_L^{(i)}|^2,\\
\sum_{S\ne\varnothing}|S|^2v_S
&=\frac12\sum_{i=1}^n
\E|C_L-C_L^{(i)}|^2\\
&\quad+\frac14\sum_{i\ne j}
\E|C_L-C_L^{(i)}-C_L^{(j)}+C_L^{(i,j)}|^2.
\end{align*}
Here the last sum is over ordered pairs.

Lemma~\ref{lem:replacement-fourth-moment} gives
\begin{align*}
\E|C_L-C_L^{(i)}|^2
\le
\left(\E|C_L-C_L^{(i)}|^4\right)^{1/2}
\le C_d.
\end{align*}
For $i\ne j$, condition on
$X_i,X_i',X_j,X_j'$ and apply
Lemma~\ref{lem:mixed-replacement-decay}.
Averaging over these independent uniform locations
and using $2\gamma>d$ yields
\begin{align*}
&\E|C_L-C_L^{(i)}-C_L^{(j)}+C_L^{(i,j)}|^2\\
&\quad\le
C_d\E\sum_{\substack{a\in\{X_i,X_i'\}\\
                     b\in\{X_j,X_j'\}}}
(1+\rho_L(a,b))^{-2\gamma}\\
&\quad\le
\frac{C_d}{n}
\int_{\T_L^d}(1+\rho_L(0,u))^{-2\gamma}\dd u
\le\frac{C_d}{n}.
\end{align*}
Consequently,
\begin{align*}
\sum_{S\ne\varnothing}|S|^2v_S
\le C_dn+C_d\frac{n(n-1)}n
\le C_dn.
\end{align*}

Finally, exchangeability and Cauchy--Schwarz give
\begin{align*}
\left(
\frac n2\E|C_L-C_L^{(1)}|^2
\right)^2
&=
\left(\sum_{S\ne\varnothing}|S|v_S\right)^2\\
&\le
\left(\sum_{S\ne\varnothing}v_S\right)
\left(\sum_{S\ne\varnothing}|S|^2v_S\right)\\
&\le C_dn\,\Var(C_L).
\end{align*}
Rearranging proves
\eqref{eq:variance-lower-reduction}.
\end{proof}

\begin{lemma}\label{lem:replacement-lower-bound}
For all sufficiently large $L$,
\begin{align}\label{eq:replacement-lower-bound}
\E|C_L-C_L^{(1)}|^2\ge c_d>0.
\end{align}
\end{lemma}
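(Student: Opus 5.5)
We need a lower bound on $\E|C_L-C_L^{(1)}|^2=\E|\Delta_{p,q}C_L|^2$, where $p,q$ are independent uniform points and the background sites $z_1,\dots,z_{n-1}$ are independent uniform. The plan is to isolate a \emph{local} event of probability bounded below on which the replacement cost is at least a fixed constant in absolute value. Nonlocal effects will be controlled by the first-order expansion of Lemma~\ref{lem:site-derivative}.

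\textbf{Strategy: move the site continuously.} Rather than using the mass-transfer path, I would move the site $p$ continuously to $q$. By Lemma~\ref{lem:site-derivative}, the cost is Lipschitz in site locations, with gradient
\begin{align*}
2m_p(p-\overline y_p)=2(p-\overline y_p).
\end{align*}
Conditional on everything except the exact position of $p$ inside a small region, the function $x\mapsto F_x:=F_{\mathcal Z}$ (with $p$ placed at $x$) has the upper quadratic bound
\begin{align*}
F_{x+v}\le F_x+\nabla F_x\cdot v+|v|^2 .
\end{align*}
It is therefore semiconcave, with $x\mapsto F_x-|x|^2$ concave.

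\textbf{Step 1: reduce to nonconstancy of $x\mapsto F_x$.} Since $p$ and $q$ are i.i.d.\ given the background, we have
\begin{align*}
\E|\Delta_{p,q}C_L|^2=2\,\E\bigl[\Var(F_p\mid\mathcal B)\bigr].
\end{align*}
It suffices to show that, with probability at least $c_d$ over the background, the function $x\mapsto F_x$ has variance at least $c_d$ over a unit ball $B_1(x_0)$. Restricting to a ball only decreases the variance, up to the factor $|B_1|/n$. I would rather use this as: the variance over the whole torus is at least the average over disjoint unit balls of the local variances, so it is enough to get local oscillation in a positive fraction of balls.

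\textbf{Step 2: the local event.} Choose a unit ball $B_1(x_0)$ and consider the event $E$ that no background site lies in $B_{3}(x_0)$, while the cells of nearby sites have radius at most a constant $\ell_0$. The first condition has probability about $e^{-|B_3|}$. The radius condition fails with small probability by Lemma~\ref{lem:good-event-probability}, applied with $\ell_0$ large and $R$ of order $1$. By stationarity, a positive fraction of balls satisfy $E$ in expectation.

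On $E$, place $p$ at the point $x$. The cell of $p$ must contain the region near $x$ that is far from the other sites, so $C_p$ has unit volume and its barycenter $\overline y_p$ is at bounded distance from $x$. The key is to show that the gradient $2(x-\overline y_p)$ is large, at least $c$, for $x$ in a positive-measure subset of $B_1(x_0)$. If instead the gradient were tiny throughout the ball, then the barycenter would track $x$ exactly. Moving $x$ across the empty region would then force the whole neighboring partition to translate, and this contradicts the fact that the neighboring cells are pinned by the fixed sites with bounded radius.

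A cleaner route is to compare $F_x$ at two points $x_1,x_2$ at distance $1$ inside the empty ball. Concavity of $F_x-|x|^2$ implies
\begin{align*}
F_{\bar x}\ge\tfrac12(F_{x_1}+F_{x_2})+\tfrac14|x_1-x_2|^2-C\cdot0 ,
\end{align*}
which is not quite enough by itself. So I would instead directly exhibit that $F$ cannot be constant: the Hessian bound gives $D^2F\le 2I$, and a constant function would require $\overline y_p=x$ along a whole ball.

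\textbf{Main obstacle.} The hard step is converting ``$p$ alone in an empty region'' into a quantitative oscillation of $F_x$ of order one. I would try the symmetry-breaking argument first. On $E$, condition additionally on a configuration where the nearest background sites are all on one side, say within a half-space cone direction $e$. This is again a positive-probability event. Then the cell of $p$ extends farther in direction $-e$, so
\begin{align*}
(x-\overline y_p)\cdot e\ge c
\end{align*}
uniformly for $x\in B_{1/2}(x_0)$. This yields
\begin{align*}
F_{x+ e/2}-F_x\ge c/2-1/4
\end{align*}
after tuning the scale. The tuning is done by working at scale $\lambda$ with an empty ball of radius $\lambda$, which makes the linear term beat the quadratic error. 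Integrating over $x$ then gives $\Var(F_p\mid\mathcal B)\ge c_d$ on that event, completing the proof.
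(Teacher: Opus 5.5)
\textbf{There is a genuine gap.} Your Step~1 reduction, $\E|\Delta_{p,q}C_L|^2=2\E[\Var(F_p\mid\mathcal B)]$, is correct, and it matches the last step of the paper. The gap is exactly where you locate the main obstacle: you never show that the site gradient $2(x-\overline y_p)$ is bounded below on an event of positive probability.

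\textbf{Why the symmetry-breaking claim fails.} The cell $C_p$ is cut out by hyperplanes whose positions are set by the weight differences $w_p-w_u$. These differences are fixed by the global mass balance, not by where the nearby sites are. Knowing that $B_3(x_0)$ is empty and that the nearest background sites lie in a cone around $e$ does not tell you on which side of $p$ the unit-volume cell sits. Nothing in your event stops $\overline y_p$ from being arbitrarily close to $p$. So neither $(x-\overline y_p)\cdot e\ge c$ nor its uniformity over $x\in B_{1/2}(x_0)$ follows. The uniformity would also need radius control for a continuum of positions of $p$ at once, which Lemma~\ref{lem:good-event-probability} does not provide. Your other heuristics have the same defect: a single isolated site carries no local obstruction to $\overline y_p=p$.

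\textbf{The missing idea: use a pair of close sites.} Suppose $|a-b|\le\delta_R/2$ and both cells are convex, of volume one and radius at most $R$.
\begin{itemize}
\item The hyperplane separating the two cells passes between them.
\item Every slice parallel to it has area at most $\omega_{d-1}R^{d-1}$. This slab estimate puts each barycenter at distance at least $\delta_R=1/(2\omega_{d-1}R^{d-1})$ from the hyperplane, on opposite sides.
\item Hence $|\overline y_a-\overline y_b|\ge2\delta_R$, and one of the two gradients has norm at least $\delta_R$.
\end{itemize}
This conclusion is deterministic and does not involve the weights at all. The paper then counts good pairs: pairs that are close, have bounded radii, and have no other site in $B_2$ of either point, so that no site lies in two pairs. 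Their expected number is at least $c_dn$. Exchangeability then gives $\mathbb P(|\nabla_{X_1}C_L|\ge\delta_R)\ge c_d$.

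\textbf{Your final step uses Lemma~\ref{lem:site-derivative} in the wrong direction.} That lemma is one-sided: $F_{x+v}\le F_x+\nabla F_x\cdot v+|v|^2$. Moving along $+e$ when $\nabla F_x\cdot e$ is large therefore cannot give the lower bound $F_{x+e/2}-F_x\ge c/2-1/4$. With a genuinely uniform gradient bound you could integrate along segments instead, but that uniformity is what is missing.

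The paper moves against the gradient. It fixes a coordinate direction $e$ with $\mathbb P(\nabla f_{\mathcal B}(X_1)\cdot e\ge a)\ge c$ and sets $h=-(a/2)e$. On that event it obtains $f_{\mathcal B}(X_1+h)\le f_{\mathcal B}(X_1)-a^2/4$. Since $X_1+h$ is again uniform given $\mathcal B$, it concludes $\E|f_{\mathcal B}(X_1+h)-f_{\mathcal B}(X_1)|^2\le2\E|C_L-C_L^{(1)}|^2$. Only the gradient at the actual random site is needed, so no uniformity in $x$ is required.

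Separately, your concavity display has the wrong sign: concavity of $F_x-|x|^2$ gives $F_{\bar x}\ge\frac12(F_{x_1}+F_{x_2})-\frac14|x_1-x_2|^2$.
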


\begin{proof}
We first show that two sufficiently close sites
cannot both be close to the barycenters of their cells.

Let $K$ be a convex set of volume one contained
in a ball of radius $R$, and suppose that $K$
lies on one side of a hyperplane $P$.
For every hyperplane $P'$ parallel to $P$, the slice
$K\cap P'$ is contained in a $(d-1)$-dimensional ball
of radius at most $R$. Hence
\begin{align*}
\mathcal H^{d-1}(K\cap P')
\le \omega_{d-1}R^{d-1}
=:A_R,
\end{align*}
where $\omega_{d-1}$ is the volume of the unit ball
in $\R^{d-1}$. A slab of width $t$ next to $P$
therefore contains at most $A_Rt$ of the volume of $K$.
Writing $\overline y_K$ for its barycenter, we obtain
\begin{align*}
\operatorname{dist}(\overline y_K,P)
&=\int_0^\infty
\left|\{y\in K:\operatorname{dist}(y,P)>t\}\right|
\dd t\\
&\ge\int_0^{1/A_R}(1-A_Rt)\dd t
=\frac1{2A_R}
=:\delta_R.
\end{align*}

Consider two lifted cells of radius at most $R$,
with generators $a,b$. The hyperplane on which their weighted costs are equal
separates the two cells. The preceding
bound shows that their barycenters lie at distance
at least $\delta_R$ from this hyperplane on opposite
sides. Thus, if $|a-b|\le\delta_R/2$,
\begin{align*}
2\delta_R
&\le|\overline y_a-\overline y_b|\\
&\le|\overline y_a-a|+|a-b|+|b-\overline y_b|,
\end{align*}
and consequently
\begin{align}\label{eq:good-pair-barycenter-displacement}
\max\bigl\{
|a-\overline y_a|,
|b-\overline y_b|
\bigr\}\ge\delta_R/2.
\end{align}

We next find a positive density of such pairs.
Conditional on two sites $p,q$ at distance at most
one, the probability that all other sites avoid
$B_2(p)\cup B_2(q)$ is
\begin{align*}
\left(
1-\frac{|B_2(p)\cup B_2(q)|}{n}
\right)^{n-2}
\ge c_0>0
\end{align*}
for all sufficiently large $L$.
The constant $c_0$ depends only on $d$.
By \eqref{eq:conditional-radius-moments}, choose
$R$ sufficiently large that
\begin{align*}
\mathbb P\left(
\max\{R_p,R_q\}>R
\,\middle|\,p,q
\right)
\le c_0/2.
\end{align*}
This choice is uniform in the two prescribed locations.

Fix
\begin{align*}
0<\delta<\min\{1,\delta_R/2\}.
\end{align*}
For $i<j$, define the event
\begin{align*}
G_{ij}
:=\bigl\{
&\rho_L(X_i,X_j)<\delta,\quad
R_{X_i}\le R,\quad R_{X_j}\le R,\\
&X_k\notin B_2(X_i)\cup B_2(X_j)
\text{ for every }k\notin\{i,j\}
\bigr\}.
\end{align*}
We call $(X_i,X_j)$ a good pair when $G_{ij}$ occurs.
Let
\begin{align*}
I_{ij}:=\1_{G_{ij}},
\qquad
N_L:=\sum_{i<j}I_{ij}.
\end{align*}
By exchangeability and the preceding probability bounds,
\begin{align*}
\E N_L
&=\sum_{i<j}\E I_{ij}
=\binom n2\mathbb P(I_{12}=1)\\
&\ge
\binom n2\frac{|B_\delta(0)|}{n}\frac{c_0}{2}
\ge c_dn.
\end{align*}

For $L>4R+4$, the cells in each good pair admit
convex lifts of volume one contained in the
radius-$R$ balls around their generators.
By \eqref{eq:good-pair-barycenter-displacement}
and Lemma~\ref{lem:site-derivative}, whenever
$I_{ij}=1$,
\begin{align*}
\max\{|\nabla_{X_i}C_L|,|\nabla_{X_j}C_L|\}
\ge\delta_R.
\end{align*}

No site belongs to two good pairs. Indeed, if
$(X_i,X_j)$ and $(X_i,X_k)$ were both good, then $\rho_L(X_i,X_k)<\delta<2$, whereas $G_{ij}$
requires $X_k\notin B_2(X_i)$. Therefore,
\begin{align*}
N_L
&\le
\sum_{i<j}I_{ij}
\left(
\1_{\{|\nabla_{X_i}C_L|\ge\delta_R\}}
+\1_{\{|\nabla_{X_j}C_L|\ge\delta_R\}}
\right)\\
&\le
\sum_{i=1}^n
\1_{\{|\nabla_{X_i}C_L|\ge\delta_R\}}.
\end{align*}
Taking expectations and using exchangeability gives
\begin{align}\label{eq:site-gradient-lower-probability}
\mathbb P\left(
|\nabla_{X_1}C_L|\ge\delta_R
\right)
&=\frac1n\E\sum_{i=1}^n
\1_{\{|\nabla_{X_i}C_L|\ge\delta_R\}}\notag\\
&\ge\frac{\E N_L}{n}
\ge c_d.
\end{align}

Let $\mathcal B=\{X_2,\ldots,X_n\}$ and let
$f_{\mathcal B}(x)$ be the matching cost with sites
$\mathcal B\cup\{x\}$. Since
$\nabla f_{\mathcal B}(X_1)=\nabla_{X_1}C_L$,
\eqref{eq:site-gradient-lower-probability} yields
\begin{align*}
\mathbb P\left(
|\nabla f_{\mathcal B}(X_1)|\ge\delta_R
\right)\ge c_d.
\end{align*}
Put $a:=\delta_R/\sqrt d$. Whenever
$|\nabla f_{\mathcal B}(X_1)|\ge\delta_R$,
at least one signed coordinate direction
$e\in\{\pm e_1,\ldots,\pm e_d\}$ satisfies
$\nabla f_{\mathcal B}(X_1)\cdot e\ge a$.
By the union bound, we can choose one fixed
such direction $e$ for which
\begin{align*}
\mathbb P\left(
\nabla f_{\mathcal B}(X_1)\cdot e\ge a
\right)
\ge\frac{c_d}{2d}
=:c.
\end{align*}

Set $h:=-(a/2)e$, a fixed displacement opposite
to this direction. On the event
$\{\nabla f_{\mathcal B}(X_1)\cdot e\ge a\}$,
Lemma~\ref{lem:site-derivative}, with mass one,
gives
\begin{align*}
f_{\mathcal B}(X_1+h)
&\le f_{\mathcal B}(X_1)
+\nabla f_{\mathcal B}(X_1)\cdot h+|h|^2\\
&=f_{\mathcal B}(X_1)
-\frac a2\nabla f_{\mathcal B}(X_1)\cdot e
+\frac{a^2}{4}\\
&\le f_{\mathcal B}(X_1)
-\frac{a^2}{2}+\frac{a^2}{4}\\
&=f_{\mathcal B}(X_1)-\frac{a^2}{4}.
\end{align*} Hence
\begin{align*}
\E|f_{\mathcal B}(X_1+h)-f_{\mathcal B}(X_1)|^2
\ge ca^4/16.
\end{align*}

Conditional on $\mathcal B$, both $X_1$ and $X_1+h$
are uniform on the torus. Subtracting the conditional
mean and using $|u-v|^2\le2|u|^2+2|v|^2$ gives
\begin{align*}
\E|f_{\mathcal B}(X_1+h)-f_{\mathcal B}(X_1)|^2
&\le4\E\Var(C_L\mid\mathcal B)\\
&=2\E|C_L-C_L^{(1)}|^2.
\end{align*}
Combining these estimates
proves the lemma.
\end{proof}

Combining Lemmas~\ref{lem:variance-lower-reduction}
and \ref{lem:replacement-lower-bound} gives the following.

\begin{lemma}\label{lem:variance-lower-bound}
For all sufficiently large $L$,
\begin{align}\label{eq:variance-lower-bound}
\sigma_L^2=\Var(C_L)\ge c_dn,
\end{align}
where $c_d>0$ depends only on $d$.
\end{lemma}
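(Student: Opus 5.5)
This bound follows directly from the two preceding lemmas. Lemma~\ref{lem:replacement-lower-bound} gives, for all sufficiently large $L$, the lower bound $\E|C_L-C_L^{(1)}|^2\ge c_d'>0$, where $c_d'$ depends only on $d$. Substituting this into the reduction inequality \eqref{eq:variance-lower-reduction} of Lemma~\ref{lem:variance-lower-reduction} yields
\begin{align*}
\Var(C_L)\ge c_d''\,n\left(\E|C_L-C_L^{(1)}|^2\right)^2\ge c_d''(c_d')^2\,n,
\end{align*}
and we set $c_d:=c_d''(c_d')^2$. Since $c_d''$ and $c_d'$ depend only on $d$, so does $c_d$.

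Three points need checking. First, the squaring step is monotone because $\E|C_L-C_L^{(1)}|^2$ is nonnegative. Second, the reduction lemma holds for every $L$. Its constant comes from the fourth-moment bound (Lemma~\ref{lem:replacement-fourth-moment}) and the mixed-replacement bound (Lemma~\ref{lem:mixed-replacement-decay}), and both are uniform in $L$. Third, the threshold ``sufficiently large $L$'' is the one already required in Lemma~\ref{lem:replacement-lower-bound}: large enough that the probability of avoiding $B_2(p)\cup B_2(q)$ is at least $c_0$, and that $L>4R+4$.

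There is no real obstacle here. The substantive work was already done in Lemma~\ref{lem:replacement-lower-bound}, which established the barycenter-displacement argument and the positive density of good pairs, and in the Hoeffding bound $\sum_S|S|^2v_S\le C_dn$ from Lemma~\ref{lem:variance-lower-reduction}. Finally, because $C_L=n^{1+2/d}W_n$, this result converts into the lower half of \eqref{eq:variance-order}:
\begin{align*}
\Var(W_n)=n^{-2-4/d}\,\Var(C_L)\ge c_d\,n^{-1-4/d}.
\end{align*}
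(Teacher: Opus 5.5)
Your proposal is correct and matches the paper's argument: the paper also obtains this lemma by substituting the lower bound $\E|C_L-C_L^{(1)}|^2\ge c_d$ of Lemma~\ref{lem:replacement-lower-bound} into the reduction inequality \eqref{eq:variance-lower-reduction} of Lemma~\ref{lem:variance-lower-reduction}. The large-$L$ threshold is the one inherited from Lemma~\ref{lem:replacement-lower-bound}, as you say.
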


\subsection{Proof of Theorem~\ref{thm:clt}}
\label{sec:clt-conclusion}

\begin{proof}[Proof of Theorem~\ref{thm:clt}]
The variance bounds \eqref{eq:variance-order}
follow from Corollary~\ref{cor:variance-upper-bound}
and Lemma~\ref{lem:variance-lower-bound}.

For the central limit theorem, recall that
\begin{align*}
\mathcal F_i&:=\sigma(X_1,\ldots,X_i),\\
\xi_i&:=\E[C_L\mid\mathcal F_i]
       -\E[C_L\mid\mathcal F_{i-1}],\\
Q_L&:=\sum_{i=1}^n
\E[\xi_i^2\mid\mathcal F_{i-1}].
\end{align*}
In particular,
\begin{align*}
C_L-\E C_L=\sum_{i=1}^n\xi_i,
\qquad
\E Q_L=\sigma_L^2.
\end{align*}
By construction, $\xi_i$ is $\mathcal F_i$-measurable
and $\E[\xi_i\mid\mathcal F_{i-1}]=0$.
Its square integrability follows from
\eqref{eq:martingale-fourth-moment}.
Thus $(\xi_i/\sigma_L)_{i=1}^n$ is a
square-integrable martingale difference array.

To apply the martingale central limit theorem
obtained from \cite[Theorem~8.2.4]{Durrett19},
we verify the following two conditions:
\begin{enumerate}
\item
\begin{align}\label{eq:normalized-conditional-variance}
\frac{Q_L}{\sigma_L^2}
\xrightarrow{\mathbb P}1.
\end{align}

\item For every $\varepsilon>0$,
\begin{align}\label{eq:conditional-lindeberg}
\frac1{\sigma_L^2}\sum_{i=1}^n
\E\left[
\xi_i^2\1_{\{|\xi_i|>\varepsilon\sigma_L\}}
\,\middle|\,\mathcal F_{i-1}
\right]
\xrightarrow{\mathbb P}0.
\end{align}
\end{enumerate}
Both limits are taken as $L\to\infty$.

For the first condition,
Lemmas~\ref{lem:conditional-variance}
and \ref{lem:variance-lower-bound} imply
\begin{align*}
\E\left|
\frac{Q_L}{\sigma_L^2}-1
\right|^2
&=\frac{\Var(Q_L)}{\sigma_L^4}\\
&\le C_dL^{-(d-4)/4}
\longrightarrow0.
\end{align*}
This proves
\eqref{eq:normalized-conditional-variance}.

For the second condition, fix $\varepsilon>0$.
Since
\begin{align*}
\xi_i^2\1_{\{|\xi_i|>\varepsilon\sigma_L\}}
\le\frac{|\xi_i|^4}{\varepsilon^2\sigma_L^2},
\end{align*}
the tower property,
\eqref{eq:martingale-fourth-moment}, and the
variance lower bound give
\begin{align*}
&\E\left[
\frac1{\sigma_L^2}\sum_{i=1}^n
\E\left[
\xi_i^2\1_{\{|\xi_i|>\varepsilon\sigma_L\}}
\,\middle|\,\mathcal F_{i-1}
\right]
\right]\\
&\qquad=
\frac1{\sigma_L^2}\sum_{i=1}^n
\E\left[
\xi_i^2\1_{\{|\xi_i|>\varepsilon\sigma_L\}}
\right]\\
&\qquad\le
\frac1{\varepsilon^2\sigma_L^4}
\sum_{i=1}^n\E|\xi_i|^4\\
&\qquad\le
\frac{C_dn}{\varepsilon^2\sigma_L^4}
\le\frac{C_d}{\varepsilon^2n}
\longrightarrow0.
\end{align*}
The quantity inside the outer expectation is
nonnegative, so Markov's inequality proves
\eqref{eq:conditional-lindeberg}.

Both conditions therefore hold, and the
martingale central limit theorem gives
\begin{align*}
\frac{C_L-\E C_L}{\sigma_L}
=\sum_{i=1}^n\frac{\xi_i}{\sigma_L}
\Longrightarrow N(0,1).
\end{align*}
The scaling identity
\eqref{eq:cost-wasserstein-scaling}
gives the corresponding statements for $W_n$.
\end{proof}

\appendix
\section{Uniform radius estimates}\label{app:radius-moments}

We prove radius estimates that remain uniform when
the transfer masses approach zero and when the
locations of finitely many background sites are fixed.

Recall that
\begin{align*}
R_z:=\sup_{y\in C_z}\rho_L(y,z).
\end{align*}
For $o\in\T_L^d$ and $R>0$, put
\begin{align*}
\mathcal K_R(o)
:=\max\left\{
R_z:
z\in B_R(o)
\ \text{or}\
C_z\cap B_R(o)\ne\varnothing
\right\}.
\end{align*}

\begin{lemma}\label{lem:conditional-radius-moments}
Let $d\ge5$ and let $k,j$ be nonnegative integers.
Consider the target measure
\begin{align*}
\nu
&=\sum_{i=1}^{n-k-j}\delta_{X_i}
+\sum_{b=1}^{j}\delta_{a_b}\\
&\quad+\sum_{b=1}^{k}
\bigl((1-s_b)\delta_{p_b}+s_b\delta_{q_b}\bigr),
\qquad 0<s_b<1,
\end{align*}
where $n=L^d\in\mathbb N$ satisfies $n\ge k+j$,
the prescribed locations are distinct, and the
$X_i$ are independent and uniform on $\T_L^d$.

For every $p\ge1$ and every fixed spatial point $o$,
\begin{align}\label{eq:unit-ball-radius-moments}
\E[(1+\mathcal K_1(o))^p]\le C_{d,p,k+j}.
\end{align}
Moreover, for every prescribed site $z$,
\begin{align}\label{eq:conditional-radius-moments}
\E[(1+R_z)^p]\le C_{d,p,k+j}.
\end{align}
The constants depend only on $d$, $p$, and $k+j$,
and are uniform in $L$, $o$, the prescribed
locations, and the positive transfer masses.
\end{lemma}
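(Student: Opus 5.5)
The plan is a deterministic geometric criterion followed by a counting bound over random sites. The argument is the standard empty-ball one: if a point $y$ lies in $C_z$ far from $z$, then a large region near $y$ must contain few sites. First I would prove a deterministic lemma. Suppose $y\in C_z$ with $\rho_L(y,z)=r$, and let $r$ be large but at most $cL$. Then some ball of radius comparable to $r$, lying near the segment from $z$ to $y$, has total target mass $\nu(B)$ much smaller than its volume $|B|$, for instance $\nu(B)\le\frac12|B|$.

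To get this I would use cyclical monotonicity, i.e. the $c$-concavity of the optimal potential. Take $y\in C_z$ with $|y-z|=r$, and take any site $u$ whose cell contains a point $y'$ near $y$. Monotonicity gives $|y-z|^2+|y'-u|^2\le|y-u|^2+|y'-z|^2$. Hence $(y'-y)\cdot(z-u)\ge0$, with the unit direction $e=(z-y)/r$. If we consider all $y'$ in a ball $B_{\epsilon r}(y+\frac{r}{2}e)$ on the segment, every site $u$ serving those points must satisfy a half-space constraint.

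The standard conclusion, as in Ajtai--Koml\'os--Tusn\'ady style and Huesmann--Sturm allocation radius bounds, is the following. Mass-monotone transport forces points of the cone region around the segment to be transported to sites far from the segment or to $z$ itself. Because $|C_z|\le1$, the region near the segment of volume $\asymp r^d$ must be fed by sites outside a neighborhood. This yields a ball $B$ of radius $cr$ with $\nu(B)\le|B|/2$, since otherwise local supply would be used and would beat the long-distance assignment.

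I expect this deterministic step to be the main obstacle. What I would try first is the simpler quantitative version via the dual potential. On $C_z$ we have $\phi_w(y)=w_z-|y-z|^2$, while $\phi_w\ge w_u-|\cdot-u|^2$ for all $u$. Combining this with a mass balance in the ball $B_r(y)$ shows that an excess volume of order $r^d$ near $y$ is unassigned locally.

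With the criterion in hand, the probability estimate is routine. The prescribed atoms and transfer atoms contribute total mass at most $k+j$, which is negligible against $|B|\asymp r^d$ once $r\ge C_{k+j}$. The count of uniform sites in a ball of volume $V$ is $\mathrm{Binomial}(n-k-j,V/n)$, so a Chernoff bound gives probability at most $e^{-cV}$ that it falls below $V/2$. Cover the relevant balls by a net of $O(r^d(1+r)^{d})$ balls near $o$, or near $z$. A union bound then gives $\mathbb P(R_z>r)\le C r^{C}e^{-cr^d}$, and similarly $\mathbb P(\mathcal K_1(o)>r)\le Cr^Ce^{-cr^d}$, after noting that a large cell meeting $B_1(o)$ still creates a depleted ball near $o$.

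For $r\gtrsim L$ the torus geometry is handled by the trivial bound $R_z\le\sqrt dL/2$ together with the exponentially small probability at scale $cL$. This is enough since $L^p e^{-cL^d}$ is bounded. Integrating the tails gives \eqref{eq:unit-ball-radius-moments} and \eqref{eq:conditional-radius-moments}, uniformly in masses, locations, and $L$, with constants depending only on $d,p,k+j$. The argument needs no restriction on $d$ beyond the hypothesis $d\ge5$ stated in the lemma.
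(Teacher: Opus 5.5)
\textbf{There is a genuine gap, at the deterministic step you flagged.} The claim is that $y\in C_z$ with $\rho_L(y,z)=r$ forces a ball $B$ of radius comparable to $r$ with $\nu(B)\le\frac12|B|$. This is false.

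Your monotonicity computation does show something correct, once the sign is fixed to $(y-y')\cdot(z-u)\ge0$. It shows that all source points in a ball of radius $\sim r$ around the midpoint of the segment are sent to sites beyond $z$, so the local transport energy there is $\gtrsim r^{d+2}$. But large local energy does not force a local deficit. The sites inside that ball can be fed from further upstream. A translation is cyclically monotone and yet perfectly balanced locally. So the step ``otherwise local supply would be used'' has no justification.

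Here is a concrete counterexample.
\begin{itemize}
\item[(i)] Let $L\in\mathbb N$ and $r\le L/100$. Place the $n=L^d$ sites at $\{\xi_k\}_{k=1}^L\times\{1,\dots,L\}^{d-1}$, with $\xi_k=k-\tfrac12+r\sin(2\pi k/L)$.
\item[(ii)] Both marginals are products and $\rho_L^2$ is additive over coordinates. Hence the optimal plan is the product of the monotone plan on $\R/L\Z$ and the unit-cube allocation. The cells are $[c+k-1,c+k]\times Q_{m'}$, where $Q_{m'}$ is the unit cube centered at $m'$ and $c$ is some shift.
\item[(iii)] The one-dimensional displacements $r\sin(2\pi k/L)-c$ sweep an arc of length about $2r$. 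So whatever $c$ is, some cell has radius at least about $r$.
\item[(iv)] Yet all spacings lie in $[1-2\pi r/L,\,1+2\pi r/L]$. Hence every ball of radius at least a constant $C_d$ satisfies $\nu(B)\ge\frac12|B|$. There is no depleted ball at any scale above $C_d$.
\end{itemize}

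A sanity check points the same way. Your argument (local criterion, Chernoff, union bound near $o$) never uses $d\ge3$. But in $d=2$ the mean cost per site, and hence $\E R_z^2$, grows like $\log n$. So no $L$-uniform tail bound of the kind you derive can hold there.

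\textbf{What is missing.} A long cell is typically produced by small relative imbalances accumulated over all scales between $r$ and $L$, not by a depleted ball at scale $r$. This large-scale effect is carried by the averaged Poisson gradient $h_a(o)=(\eta_a*\nabla u)(o)$, where $\Delta u=\nu-1$. Dyadic scale $D$ contributes fluctuations of order $D^{-(d-2)/2}$, which is summable exactly when $d\ge3$. In $d=2$ each scale contributes order one, which gives the $\log n$ above.

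The paper's proof handles this as follows.
\begin{itemize}
\item[(i)] It reduces $\nu$ to the binomial measure $\nu_0$ using $|\nu-\nu_0|(\T_L^d)\le2(k+j)$. This is used both in the local Wasserstein discrepancies and, via a Fourier bound, in $h_a-h_a^0$.
\item[(ii)] It takes from Goldman--Huesmann a random scale $r_*$ with all moments, above which local discrepancies are small.
\item[(iii)] It uses the shifted $L^\infty$ estimate \eqref{eq:appendix-shifted-support}: once $r_*\le\delta a$, every source-minus-site displacement near $o$ is within $\epsilon a$ of $h_a(o)$.
\item[(iv)] On $\{r_*\le\delta a,\ |h_a(o)|\le\epsilon a\}$, every cell meeting $B_1(o)$ has radius at most $2\epsilon a$. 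Markov's inequality then gives polynomial tails of every order, which is all the lemma needs.
\end{itemize}

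Your monotonicity step is essentially the $L^2$-to-$L^\infty$ ingredient behind that estimate. It has to be coupled to the shift $h_a$ and a multiscale energy bound, not to a single-scale density count.
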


\begin{proof}
Add $k+j$ independent uniform points to construct
the ordinary binomial measure
\begin{align*}
\nu_0:=\sum_{i=1}^{n}\delta_{X_i}.
\end{align*}
Both measures have mass $n$, and
\begin{align*}
|\nu-\nu_0|(\T_L^d)\le2(k+j),
\end{align*}
where $|\nu-\nu_0|$ denotes the absolute signed
measure.

For a cube $Q$, define
\begin{align*}
\mathcal W_Q(\lambda)
:=W_2^2\left(
\lambda|_Q,\frac{\lambda(Q)}{|Q|}\1_Q\dd y
\right),
\end{align*}
using Euclidean distance and unnormalized measures.
For finite positive measures $\lambda,\lambda'$, we have
\begin{align*}
\mathcal W_Q(\lambda')
\le\mathcal W_Q(\lambda)
+\operatorname{diam}(Q)^2|\lambda'-\lambda|(Q).
\end{align*}
Therefore, fix the center of the cube $Q_a$ as $o$. For the cube $Q_a$ of
side length $a\le L$, we have
\begin{align*}
a^{-d}\mathcal W_{Q_a}(\nu)
\le a^{-d}\mathcal W_{Q_a}(\nu_0)+C_{d,k+j}a^{2-d}.
\end{align*}
By \cite[Theorem~4.5]{GH}, the ordinary binomial
measure has a random scale $r_0\ge1$, with
\begin{align*}
\sup_L \E[r_0^p]\le C_{d,p},
\qquad p\ge1,
\end{align*}
such that
\begin{align*}
a^{-d}\mathcal W_{Q_a}(\nu_0)\le r_0^2
\end{align*}
at every dyadic scale $r_0\le a\le L$.
Since $a^{2-d}\le1$ for $a\ge1$, enlarging $r_0$
gives a random radius $r_*\ge1$ such that
\begin{align*}
\E[r_*^p]&\le C_{d,p,k+j},
\qquad p\ge1,\\
a^{-d}\mathcal W_{Q_a}(\nu)&\le r_*^2
\end{align*}
at every dyadic scale $r_*\le a\le L$.
These are the discrepancy assumptions of
\cite[Theorem~4.1]{GH}.

Let $u,u_0$ be the mean-zero periodic solutions of
\begin{align*}
\Delta u=\nu-1,
\qquad
\Delta u_0=\nu_0-1,
\end{align*}
and let $\eta_a$ be the periodization of
$a^{-d}\eta(\cdot/a)$, where $\eta$ is the smooth
radial averaging kernel used in \cite{GH}. Define
\begin{align*}
h_a(o)&:=\int_{\T_L^d}\eta_a(y-o)\nabla u(y)\dd y,\\
h_a^0(o)&:=\int_{\T_L^d}\eta_a(y-o)\nabla u_0(y)\dd y.
\end{align*}

In the notation of \cite{GH}, $\nu_0=\mu^{1,L}$,
$u_0=u^{1,L}$, and
$h_a^0(o)=\nabla u_a^{1,L}(o)$.
Thus \cite[Lemma~3.1, (3.1)]{GH}, with $r=a$
and $x=o$, gives, for $p\ge2$,
\begin{align*}
\left(\E|h_a^0(o)|^p\right)^{1/p}
&\le C_{d,p}
\left(1+a^{-d(p-2)/(2p)}\right)a^{-(d-2)/2}\\
&\le C_{d,p},
\qquad a\geq 1.
\end{align*}
For $1\le p<2$, the same bound follows from
the second moment.

We compare $h_a$ and $h_a^0$ by Fourier expansion.
Write
\begin{align*}
\widehat\eta(\xi)
&:=\int_{\R^d}\eta(y)e^{-2\pi i\xi\cdot y}\dd y,\\
\widehat{\nu-\nu_0}(\ell)
&:=\int_{\T_L^d}e^{-2\pi i\ell\cdot x/L}
(\nu-\nu_0)(\dd x).
\end{align*}
Since $\Delta(u-u_0)=\nu-\nu_0$, averaging its
gradient against the symmetric kernel $\eta_a$ gives
\begin{align*}
h_a(o)-h_a^0(o)
=-\frac{i}{2\pi L^{d-1}}
\sum_{\ell\in\Z^d\setminus\{0\}}
\frac{\ell}{|\ell|^2}
\widehat\eta(a\ell/L)
\widehat{\nu-\nu_0}(\ell)
e^{2\pi i\ell\cdot o/L}.
\end{align*}
Smoothness and compact support of $\eta$, together
with $|\nu-\nu_0|(\T_L^d)\le2(k+j)$, imply
\begin{align*}
|\widehat\eta(\xi)|
&\le C_d(1+|\xi|)^{-d-1},\\
|\widehat{\nu-\nu_0}(\ell)|
&\le2(k+j).
\end{align*}
Using $a\ge1$, we therefore obtain
\begin{align*}
|h_a(o)-h_a^0(o)|
&\le\frac{C_d(k+j)}{L^{d-1}}
\sum_{\ell\ne0}\frac{|\widehat\eta(a\ell/L)|}{|\ell|}\\
&\le\frac{C_d(k+j)}{L^{d-1}}
\sum_{\ell\ne0}
\frac{(1+|\ell|/L)^{-d-1}}{|\ell|}\\
&\le C_d(k+j).
\end{align*}

Minkowski's inequality now gives
\begin{align*}
\left(\E|h_a(o)|^p\right)^{1/p}
&\le
\left(\E|h_a^0(o)|^p\right)^{1/p}
+C_d(k+j)\\
&\le C_{d,p,k+j}.
\end{align*}

Fix $\epsilon\in(0,1/16)$ and choose $\delta>0$
small enough that $C_d\delta^{2/(d+2)}\le\epsilon$. If $r_*\le\delta a$, then \cite[(4.7)]{GH} gives
\begin{align}\label{eq:appendix-shifted-support}
|x-(y-h_a(o))|
\le C_da\left(\frac{r_*}{a}\right)^{2/(d+2)}
\le\epsilon a
\end{align}
for every pair $(x,y)$ in the support of the lifted
optimal plan such that
\begin{align*}
x\in B_a(o)
\qquad\text{or}\qquad
y\in B_a(o+h_a).
\end{align*}
Here $x$ is a lifted site and $y$ is a source point
allocated to it. If also $|h_a|\le\epsilon a$, then
\begin{align*}
|x-y|
\le |x-(y-h_a)|+|h_a|
\le2\epsilon a.
\end{align*}
Thus, on $\{r_*\le\delta a,\ |h_a|\le\epsilon a\}$,
every such pair has distance at most $2\epsilon a$.

Take $4\le a\le L$ and work on the event
$\{r_*\le\delta a,\ |h_a|\le\epsilon a\}$.
Consider a cell meeting $B_1(o)$, and choose a
lifted source point $y$ in this intersection,
with corresponding lifted site $x$. Since
\begin{align*}
|y-(o+h_a)|\le1+\epsilon a<a,
\end{align*}
we have $y\in B_a(o+h_a)$.
The preceding displacement bound gives
\begin{align*}
|x-o|\le|x-y|+|y-o|
\le2\epsilon a+1<a.
\end{align*}
Since $x\in B_a(o)$,
\eqref{eq:appendix-shifted-support} applies
to every source point allocated to $x$.
The entire cell therefore has radius at most
$2\epsilon a$, and hence
\begin{align*}
\mathcal K_1(o)\le2\epsilon a<a.
\end{align*}

By the moment bounds for $r_*$ and $h_a$,
Markov's inequality gives, for every $q\ge1$,
\begin{align*}
\mathbb P(\mathcal K_1(o)>a)
&\le\mathbb P(r_*>\delta a)
+\mathbb P(|h_a|>\epsilon a)\\
&\le C_{d,q,k+j}a^{-q}.
\end{align*}
Thus, for any $p\ge1$, choosing $q>p$ and integrating
the tail yields
\begin{align*}
\E[(1+\mathcal K_1(o))^p]
&=1+p\int_0^\infty
(1+a)^{p-1}\mathbb P(\mathcal K_1(o)>a)\dd a\\
&\le C_{d,p,k+j},
\end{align*}
proving \eqref{eq:unit-ball-radius-moments}. Taking $o=z$ proves
\eqref{eq:conditional-radius-moments}.

\end{proof}

\section{Hoeffding decomposition}
\label{sec:hoeffding}

Let $X_1,\ldots,X_n$ be independent random variables
and let $Z=f(X_1,\ldots,X_n)$ be square-integrable.
For $S\subseteq\{1,\ldots,n\}$, write
$X_S=(X_i)_{i\in S}$. The Hoeffding decomposition is
\begin{align}\label{eq:hoeffding-decomposition}
Z-\E Z=\sum_{S\ne\varnothing}U_S,
\qquad
U_S:=
\sum_{T\subseteq S}
(-1)^{|S|-|T|}\E[Z\mid X_T],
\end{align}
where $\E[Z\mid X_\varnothing]=\E Z$.

Each $U_S$ depends only on $X_S$ and has mean zero
when any one of its coordinates is averaged out:
\begin{align}\label{eq:hoeffding-centering}
\E[U_S\mid X_{S\setminus\{i\}}]=0,
\qquad i\in S.
\end{align}
The components are orthogonal, so
\begin{align}\label{eq:hoeffding-variance}
\E[U_SU_T]=0\quad(S\ne T),
\qquad
\Var(Z)=\sum_{S\ne\varnothing}\E U_S^2.
\end{align}

\section{Why the product Poincar\'e bound is not sharp}
\label{app:direct-poincare}

The variance upper bound in
Corollary~\ref{cor:variance-upper-bound} does not
follow directly from the product Poincar\'e estimate
used in \cite[Lemma~2.5]{FM26}.
We record the bound obtained by that argument.

By Lemma~\ref{lem:site-derivative},
Cauchy--Schwarz, and the unit cell volumes,
\begin{align*}
\sum_{i=1}^n\E|\nabla_{X_i}C_L|^2
&\le
4\E\sum_{i=1}^n
\int_{C_{X_i}}\rho_L(y,X_i)^2\dd y\\
&=4\E C_L\\
&\le4\E\sum_{i=1}^nR_{X_i}^2
\le C_dn.
\end{align*}
The last inequality follows from
Lemma~\ref{lem:conditional-radius-moments},
conditioning on the location of each site.

The Poincar\'e constant of $\T_L^d$ is
$L^2/(4\pi^2)$, and the same constant applies
to its product. Therefore,
\begin{align}\label{eq:direct-poincare-variance}
\Var(C_L)
&\le\frac{L^2}{4\pi^2}
\sum_{i=1}^n\E|\nabla_{X_i}C_L|^2\notag\\
&\le\frac{L^2}{\pi^2}\E C_L
\le C_dnL^2.
\end{align}
In contrast,
Corollary~\ref{cor:variance-upper-bound} gives
$\Var(C_L)\le C_dn$.

Equivalently, using
\eqref{eq:cost-wasserstein-scaling}, these bounds
for the squared Wasserstein distance are
\begin{align*}
\text{Direct Poincar\'e bound:}\qquad
\Var(W_n)&\le C_dn^{-1-2/d},\\
\text{Bound proved here:}\qquad
\Var(W_n)&\le C_dn^{-1-4/d}.
\end{align*}

\section*{Acknowledgements}
The author thanks Gilles Mordant for helpful
discussions and valuable suggestions that improved the exposition.

\clearpage
\bibliographystyle{alphaurl}
\bibliography{References}

\end{document}